\documentclass[onefignum,onetabnum]{siamart171218}

\usepackage{color}
\usepackage{amssymb}


\usepackage{lipsum}
\usepackage{amsfonts}
\usepackage{graphicx}
\usepackage{epstopdf}
\usepackage{algorithmic}
\usepackage{subfigure}
\usepackage{enumitem}
\ifpdf
  \DeclareGraphicsExtensions{.eps,.pdf,.png,.jpg}
\else
  \DeclareGraphicsExtensions{.eps}
\fi


\newsiamremark{remark}{Remark}
\newsiamremark{hypothesis}{Hypothesis}
\crefname{hypothesis}{Hypothesis}{Hypotheses}
\newsiamthm{claim}{Claim}

\headers{A positive and moment-preserving spectral method}{Z. Cai, B. Lin and M. Lin}

\title{A positive and moment-preserving Fourier spectral method\thanks{Submitted to the editors DATE.
\funding{Zhenning Cai was funded by the Academic Research Fund of the Ministry of Education of Singapore under grant No.~A-0004592-00-00. Meixia Lin was partially supported by the Singapore University of Technology and Design under MOE Tier 1 Grant SKI 2021{\_}02{\_}08.
}}}

\author{Zhenning Cai\thanks{Department of Mathematics, National University of Singapore, Singapore 119076 
  (\email{matcz@nus.edu.sg}).}
\and Bo Lin\thanks{Corresponding author. Department of Mathematics, National University of Singapore, Singapore 119076 
	(\email{linbo@u.nus.edu}).}
\and Meixia Lin\thanks{Engineering Systems and Design, Singapore University of Technology and Design, Singapore 487372 (\email{meixia\_lin@sutd.edu.sg}).} 
}

\usepackage{amsopn}

\makeatletter
\newcommand*{\addFileDependency}[1]{
  \typeout{(#1)}
  \@addtofilelist{#1}
  \IfFileExists{#1}{}{\typeout{No file #1.}}
}
\makeatother

\newcommand*{\myexternaldocument}[1]{%
    \externaldocument{#1}%
    \addFileDependency{#1.tex}%
    \addFileDependency{#1.aux}%
}

\ifpdf
\hypersetup{
  pdftitle={A positive and moment-preserving spectral method},
  pdfauthor={Z. Cai, B. Lin, and M. Lin}
}
\fi


\myexternaldocument{ex_supplement}


\begin{document}

\maketitle

\begin{abstract}
    This paper presents a novel Fourier spectral method that utilizes optimization techniques to ensure the positivity and conservation of moments in the space of trigonometric polynomials. We rigorously analyze the accuracy of the new method and prove that it maintains spectral accuracy. To solve the optimization problem, we propose an efficient Newton solver that has quadratic convergence rate. Numerical examples are provided to demonstrate the high accuracy of the proposed method. Our method is also integrated into the spectral solver of the Boltzmann equation, showing the benefit of our approach in applications.
\end{abstract}

\begin{keywords}
  Fourier spectral method, moment-preserving, positivity, convex optimization
\end{keywords}

\begin{AMS}
  65T40, 65N35, 90C25
\end{AMS}

\section{Introduction}
In kinetic theories, the distribution functions are introduced to describe the number density of particles in the position-velocity space. These functions are non-negative everywhere, and the moments of these functions often correspond to fundamental physical quantities such as density, energy and electric charge, many of which are conservative by fundamental laws of nature. These properties are also reflected in kinetic equations such as the Boltzmann equation, the radiative transfer equation and the Vlasov equation. Numerically, researchers have also been making efforts to preserve these properties \cite{cai2018entropic,shen2022positive,Dreyer1987maxentropy,positivepn2010,levermore1996moment,siamdvm1988,sn1988}.

In general, it can be observed that for spectral methods, it is much harder to achieve positivity due to the oscillatory behavior of high-frequency basis functions, and sometimes the conservation laws are also lost when domain truncation is needed. In particular, for the Boltzmann equation, the Fourier spectral method has a much lower computational complexity for the quadratic collision operator compared with other methods. However, the Fourier spectral method does not preserve momentum and energy conservation, so that the equilibrium turns out to be a constant instead of the Maxwellian \cite{filbet2011analysis}. The momentum and energy conservation can be fixed by a post-processing after each time step \cite{gamba2009jcp,rey2022sinum}, but the positivity is still absent, which may affect the quality of the solution in long-time simulations. The positivity of the solution can be recovered by applying filters \cite{cai2018entropic,Pareschi2000filter}, but a positivity-preserving filter will reduce the convergence rate to second order, and it is unclear how to combine the filtering and the conservation fix: applying one after another will ruin the property achieved by the former.

In this work, we focus on the Fourier spectral method and propose a numerical strategy that preserves both the moments and the positivity on collocation points. The approach is based on an optimization problem with both equality and inequality constraints, and it will be shown that our approximation retains the spectral accuracy under mild conditions. Our method can be considered as an extension of a previous work in \cite{rey2022sinum}, where only the equality constraints are considered to preserve moments. We will then introduce an efficient numerical algorithm for the optimization problem, and integrate the method into the solver of the Boltzmann equation. Since the negative part of the distribution function can be controlled in the original Fourier spectral method \cite{gamba2018lagrange,filbet2011analysis,hu2021neg}, the spectral accuracy is also observed in the positivity-preserving Boltzmann solver. With positivity guaranteed, we can further combine the solver with an entropic scheme introduced in \cite{entropy2022sinum}, so as to achieve good quality in long-time simulations.

The paper is organized as follows. Our main results are in \cref{sec:main}, our detailed proof is in \cref{sec:optproblem,sec:bigproof,sec:cacatbound}, a practical algorithm is given in \cref{sec:algorithm}, numerical results are in \cref{sec:numerical}, and the conclusions follow in \cref{sec:conclusions}.

\section{Main results}
\label{sec:main}

\subsection{Positive and moment-preserving projection}
The major task of this paper is to find a positive and moment-preserving spectral approximation of a positive periodic  function $f \in L_p^2([-\pi, \pi]^d)$, $f \geq 0$ in the discrete function space
\begin{equation} \label{eq:Sn}
    \mathbb{S}^{N} = \text{span} \{ e^{i k \cdot x} \mid k \in \mathcal{N}^d \} \cap \mathbb{R},
\end{equation}
where
\begin{displaymath}
\mathcal{N} = \{-N, \cdots, N\}.
\end{displaymath}
Since requiring the function to be pointwisely positive may harm the spectral accuracy, here we consider
the approximation in a subset of $\mathbb{S}^N$ containing only functions that are positive on all the collocation points $x_k = 2\pi k / (2N+1)$, $k\in \mathcal{N}^d$. For simplicity, we define this subset as $\mathbb{S}_+^N$:
\begin{equation}
    \mathbb{S}^{N}_+ = \{ f_N \in \mathbb{S}^{N} \mid f_N(x_k) \geq 0, \, \forall x_k = 2\pi k / (2N+1), \, k \in \mathcal{N}^d \}.
\end{equation}
Meanwhile, using $\langle \varphi(x) \rangle$ to denote the integral of $\varphi(x)$ on $[-\pi,\pi]^d$, we want the moments $\boldsymbol{\rho}(f) := \langle \boldsymbol{m}(x) f(x) \rangle$ to be preserved during the approximation, where $\boldsymbol{m}(x)$ refers to a vector of $M$ linearly independent polynomials. In addition, one component of $\boldsymbol{m}(x)$ is required to be a constant, which implies the conservation of mass. For the Boltzmann equation, we may consider choosing $\boldsymbol{m}(x) = (1, x, x^2)^{\intercal}$ to preserve mass, momentum and energy.
Under these constraints, a natural idea to find the approximation is to solve the following convex optimization problem:
\begin{equation}\label{eq:optieq}
    \Pi_+^N f := \operatorname*{argmin}_{g \in \mathbb{S}^{N}_+ } \| g - f\|_2^2, \qquad \text{s.t. }  \boldsymbol{\rho}(g) = \boldsymbol{\rho}(f).
\end{equation}
This approach is similar to the method in \cite{rey2022sinum}, where the authors only considered the preservation of moments, so that $f$ is approximated by
\begin{equation}\label{eq:optargmin}
    \Pi^N f := \operatorname*{argmin}_{g \in \mathbb{S}^{N} } \| g - f\|_2^2, \qquad \text{s.t. }  \boldsymbol{\rho}(g) = \boldsymbol{\rho}(f).
\end{equation}
Compared to the solution of \cref{eq:optargmin}, our new $L^2$ projection \cref{eq:optieq} requires only an additional constraint on the positivity. 

Here we state our main result as \cref{thm:spconverge}; the proof is deferred to \cref{sec:optproblem,sec:bigproof,sec:cacatbound}. Throughout this paper, we will use the notation ``$A \lesssim B$'' to denote the inequality ``$A \leq CB$'' for a constant $C$ that can depend on the dimension $d$, the size of the domain $(2\pi)^d$, the polynomials $\boldsymbol{m}$.

\begin{definition}
We recall the interpolation operator $\mathcal{I}^N$ from $C((-\pi,\pi)^d)$ to $\mathbb{S}^N$ and projection operator $\mathcal{P}^N$ from $L^2((-\pi,\pi)^d)$ to $\mathbb{S}^N$, which are 
\begin{equation}\label{eq:interpolate}
    (\mathcal{I}^N g)(x_k) = g(x_k), \  \forall k \in \mathcal{N}^d; \qquad \mathcal{P}^N g = \sum_{k \in \mathcal{N}^d} \hat{g}_k e^{i k \cdot x},
\end{equation}
where $\hat{g}_k$ is the Fourier coefficients
\begin{equation}\label{eq:fouriertran}
    \hat{g}_k = \frac{1}{(2\pi)^d} \int_{[-\pi, \pi]^d} g(x) e^{-ik \cdot x} \mathrm{d}x.
\end{equation}
\end{definition}

\begin{theorem}\label{thm:spconverge}
  For non-negative $f \in L_p^2([-\pi, \pi]^d) \cap C((-\pi,\pi)^d)$, under conditions
  \begin{enumerate}[label=(H\arabic*)]
    \item \label{Hcon1} the component functions of $\boldsymbol{m}(x)$, denoted as $m_j(x)$ for $1 \leq j \leq M$, are linearly independent;
    \item \label{Hcon3} $\exists g>0$ such that $\boldsymbol{\rho}(g) = \boldsymbol{\rho}(f)$;
    \item \label{Hcon4} $\exists g \in \mathbb{S}^N$ such that $g(x_k)\!>\! 0$ for $k\! \in \! \mathcal{N}^d$ and $\frac{(2\pi)^d}{(2N+1)^d} \sum_{k \in \mathcal{N}^d} g(x_k) \mathcal{P}^N \boldsymbol{m}(x_k)\! =\! \boldsymbol{\rho} (f)$; 
  \end{enumerate}
 there exists a $N_0>0$, such that for $N \geq N_0$, $\Pi^N_+ f$ in \cref{eq:optieq} is well defined, and
  \begin{equation}\label{eq:spectral}
      \| f - \Pi^N_+ f\|_{2} \lesssim \left( \| f - \Pi^N f \|_{2} + \| f - \mathcal{I}^N f \|_{2} \right).
  \end{equation}
\end{theorem}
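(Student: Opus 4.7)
The plan is to exploit the minimizing property of $\Pi^N_+ f$: if one constructs any $\tilde{g}$ in the feasible set
\[
\mathcal{F}_N := \{g \in \mathbb{S}^N_+ : \boldsymbol{\rho}(g) = \boldsymbol{\rho}(f)\}
\]
with $\|f - \tilde{g}\|_2 \lesssim \|f - \Pi^N f\|_2 + \|f - \mathcal{I}^N f\|_2$, then the conclusion follows immediately from the optimality inequality $\|f - \Pi^N_+ f\|_2 \leq \|f - \tilde{g}\|_2$. Before that, well-posedness of \cref{eq:optieq} must be verified: hypothesis (H3) provides, for $N$ large, a strictly positive $g^* \in \mathbb{S}^N$ with $\boldsymbol{\rho}(g^*) = \boldsymbol{\rho}(f)$, so $\mathcal{F}_N$ is non-empty, and strict convexity of the quadratic objective on the closed convex set $\mathcal{F}_N$ yields existence and uniqueness of $\Pi^N_+ f$.

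To construct $\tilde{g}$, I would correct $\Pi^N f$ in two steps. First, using the Lagrange basis $\{\delta_k\} \subset \mathbb{S}^N$ defined by $\delta_k(x_j) = \delta_{jk}$, set
\[
h_1 := \sum_{k \in \mathcal{N}^d} (\Pi^N f)^-(x_k)\, \delta_k \in \mathbb{S}^N,
\]
where $(\cdot)^-$ denotes the negative part, so that $(\Pi^N f + h_1)(x_k) \geq 0$ for every $k$. Since $f(x_k) \geq 0$, the pointwise inequality $(\Pi^N f)^-(x_k) \leq |(\Pi^N f - \mathcal{I}^N f)(x_k)|$ holds, and the discrete Parseval identity on $\mathbb{S}^N$ gives $\|h_1\|_2 \leq \|\Pi^N f - \mathcal{I}^N f\|_2 \lesssim \|f - \Pi^N f\|_2 + \|f - \mathcal{I}^N f\|_2$. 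The perturbation $h_1$ shifts moments by $\boldsymbol{\rho}(h_1)$, with $\|\boldsymbol{\rho}(h_1)\| \lesssim \|h_1\|_2$ by Cauchy--Schwarz. Second, fix a finite-dimensional low-frequency subspace $\mathcal{V} \subset \mathbb{S}^N$ of dimension $M$ on which $\boldsymbol{\rho}|_\mathcal{V}$ is invertible with $N$-independent bound on its inverse; this holds for $N \geq N_0$ by hypothesis (H1) together with $\mathcal{P}^N m_j \to m_j$ in $L^2$. Writing $c^* := \min_k g^*(x_k) > 0$, define
\[
\tilde{g} := \alpha g^* + (1-\alpha)(\Pi^N f + h_1) + h_2, \qquad \boldsymbol{\rho}(h_2) = -(1-\alpha)\boldsymbol{\rho}(h_1),
\]
with $h_2 \in \mathcal{V}$ uniquely determined by the moment equation. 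Then $\boldsymbol{\rho}(\tilde{g}) = \boldsymbol{\rho}(f)$, and at every collocation point $\tilde{g}(x_k) \geq \alpha c^* - \|h_2\|_\infty$. Because $h_2$ lives in the fixed subspace $\mathcal{V}$, one obtains $\|h_2\|_\infty \lesssim \|\boldsymbol{\rho}(h_2)\| \lesssim (1-\alpha)\|h_1\|_2$ with $N$-independent constants, so choosing $\alpha = C\|h_1\|_2$ for sufficiently large $C$ (with $\|h_1\|_2$ small, which holds for $N \geq N_0$) makes $\tilde{g}$ feasible. The triangle inequality on $\|f - \tilde{g}\|_2$ then delivers the desired bound.

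The main obstacle is coordinating the two corrections: $h_1$ has small $L^2$ norm via discrete Parseval but can be pointwise large (concentrated at the violating collocation points), while $h_2$ must lie in a fixed low-frequency subspace so that its sup norm scales with $\|\boldsymbol{\rho}(h_2)\|$ alone --- a generic $h_2 \in \mathbb{S}^N$ with the same moments could have $\|h_2\|_\infty$ as large as $N^{d/2}\|h_2\|_2$ by a Nikolskii-type estimate, which would ruin the $N$-uniform positivity bound on $\tilde{g}$. Establishing the $N$-independent invertibility of $\boldsymbol{\rho}|_\mathcal{V}$ is the technical crux and uses (H1) combined with the stability of the fixed moment matrix $[\langle m_j\, \phi_\ell\rangle]$ for a basis $\{\phi_\ell\}$ of $\mathcal{V}$. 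Hypothesis (H3) plays a dual role: it certifies non-emptiness of $\mathcal{F}_N$ and it provides the positivity reservoir $g^*$ needed to absorb any residual negativity caused by $h_2$.
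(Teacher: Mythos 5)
Your strategy---build an explicit feasible competitor $\tilde g\in\mathbb{S}^N_+$ with $\boldsymbol{\rho}(\tilde g)=\boldsymbol{\rho}(f)$ and invoke the optimality of $\Pi^N_+f$---is genuinely different from the paper's proof, which instead compares $\Pi^N f$ with $\Pi^N_+(\Pi^N f)$, represents the correction through the KKT system of the active constraints, and devotes most of its effort to bounding $\|C_{\mathcal{A}}\|_2$, $\|(C_{\mathcal{A}}C_{\mathcal{A}}^{\intercal})^{-1}\|_2$ and the relative size of the active set. Parts of your construction are sound: the correction $h_1$ and the bound $\|h_1\|_2\leq\|\Pi^Nf-\mathcal{I}^Nf\|_2$ mirror \cref{lemma:negbound}, and a fixed $M$-dimensional subspace $\mathcal{V}$ on which $\boldsymbol{\rho}$ is invertible with $N$-independent constants can indeed be obtained from \ref{Hcon1} (for instance $\mathcal{V}=\operatorname{span}\{\mathcal{P}^{N_1}m_j\}$ with $N_1$ fixed and large, using \cref{lemma:polycoeffbound}), so $\|h_2\|_\infty\lesssim\|\boldsymbol{\rho}(h_2)\|$ is fine.

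The genuine gap is the $N$-uniformity of the positivity reservoir $g^*$. Condition \ref{Hcon4} is only a per-$N$ existence statement: it provides no lower bound on $c^*=\min_k g^*(x_k)$ and no upper bound on $\|g^*\|_2$ that are uniform in $N$. Feasibility forces $\alpha c^*\geq\|h_2\|_\infty$, i.e.\ $\alpha\gtrsim\|h_1\|_2/c^*$ (your ``$\alpha=C\|h_1\|_2$ with $C$ sufficiently large'' hides exactly $C\gtrsim 1/c^*$), and the competitor's error then contains the term $\alpha\,\|g^*-(\Pi^Nf+h_1)\|_2\lesssim(\|g^*\|_2+\|f\|_2)\,\|h_1\|_2/c^*$. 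Hence the implied constant in your version of \cref{eq:spectral} is proportional to $\|g^*\|_2/c^*$, which \ref{Hcon4} does not prevent from blowing up as $N\to\infty$; as written the estimate is not the claimed $N$-uniform bound. To close the argument you must construct a specific reservoir with uniform bounds, which is essentially what the paper's \cref{lemma:twoopt} supplies: $\mathcal{E}_N=\mathcal{I}^N(\exp(\boldsymbol{\alpha}_N\cdot\mathcal{P}^N\boldsymbol{m}))$ is positive at the nodes, has the correct moments, and $\boldsymbol{\alpha}_N\to\boldsymbol{\alpha}$ gives $\sup_N\|\mathcal{E}_N\|_2<\infty$; a uniform positive lower bound on $\min_k\mathcal{E}_N(x_k)$ additionally needs a uniform bound on $\max_k|\mathcal{P}^Nm_j(x_k)|$, which would have to be proved. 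This machinery uses \ref{Hcon3} as well, so that hypothesis is not dispensable in your route either. Finally, your choice of $\alpha$ requires $\|h_1\|_2$ to be small, i.e.\ $\|f-\Pi^Nf\|_2+\|f-\mathcal{I}^Nf\|_2\to0$; this should be folded explicitly into the definition of $N_0$. Without these additions the proof does not go through.
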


Based on the spectral convergence of interpolation error and the spectral convergence of $\| f - \Pi^N f\|_2$ in \cite{rey2022sinum}, we directly obtain a corollary of the above theorem.

\begin{corollary}\label{cor:spectral}
    Under the conditions \ref{Hcon1}--\ref{Hcon4}, if $f \geq 0$ and $f \in H_p^r([-\pi, \pi]^d)$ where $r > d/2$ is an integer, it holds that for sufficiently large $N$,
    \begin{equation}
      \| f - \Pi^N_+ f\|_{2} \lesssim\frac{1}{N^r} \| f \|_{H^r_p}.
  \end{equation}
\end{corollary}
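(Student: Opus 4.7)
The strategy is to exhibit an explicit competitor $\tilde g \in \mathbb{S}^N_+$ with $\boldsymbol{\rho}(\tilde g) = \boldsymbol{\rho}(f)$ whose distance to $f$ already satisfies the right-hand side of \cref{eq:spectral}, and then close the argument by the optimality $\|\Pi^N_+ f - f\|_2 \leq \|\tilde g - f\|_2$.

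First I would settle well-definedness. The objective $g\mapsto \|g-f\|_2^2$ is strictly convex and coercive on the finite-dimensional space $\mathbb{S}^N$, and the feasible set is the intersection of an affine subspace (the moment equalities) with a closed polyhedral cone (the collocation-point positivity constraints). Hypothesis \ref{Hcon4} supplies a strictly feasible point: since the trapezoidal rule is exact on $\mathbb{S}^{2N}$, for any $g\in \mathbb{S}^N$ the discrete sum $\frac{(2\pi)^d}{(2N+1)^d}\sum_k g(x_k)\,\mathcal{P}^N \boldsymbol{m}(x_k)$ coincides with $\langle g\cdot\mathcal{P}^N\boldsymbol{m}\rangle = \langle g\cdot\boldsymbol{m}\rangle = \boldsymbol{\rho}(g)$, so the condition in \ref{Hcon4} is equivalent to the continuous moment identity and provides Slater's condition. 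Existence and uniqueness of $\Pi^N_+ f$ follow from standard convex-optimization arguments.

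The main work is the construction of $\tilde g$. The key pointwise observation uses $f(x_k)\geq 0$ and $f(x_k)=(\mathcal{I}^N f)(x_k)$:
\[
 \max_{k\in \mathcal{N}^d}\bigl(-\Pi^N f(x_k)\bigr)_+ \;\leq\; \max_{k\in \mathcal{N}^d}\bigl|(\mathcal{I}^N f-\Pi^N f)(x_k)\bigr|,
\]
and since $\mathcal{I}^N f-\Pi^N f\in \mathbb{S}^N$, the discrete Parseval identity translates the right-hand side into a quantity controlled by $\|\mathcal{I}^N f-\Pi^N f\|_2 \leq \|f-\mathcal{I}^N f\|_2 + \|f-\Pi^N f\|_2$. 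To absorb these small grid-negativities while preserving moments, I would then build a reference $g^\ast_N \in \mathbb{S}^N$ with $g^\ast_N(x_k)\geq c>0$ uniformly, $\boldsymbol{\rho}(g^\ast_N)=\boldsymbol{\rho}(f)$, and $L^2$ norm bounded independently of $N$: start from a continuous $g_\infty>0$ with $\boldsymbol{\rho}(g_\infty)=\boldsymbol{\rho}(f)$ supplied by \ref{Hcon3}, take its spectral projection $\mathcal{P}^N g_\infty$, and restore the moment identity by a bounded correction obtained from the dual basis of $\boldsymbol{m}$ in $\mathbb{S}^N$ (which exists for large $N$ by the linear independence in \ref{Hcon1}). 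The competitor is then
\[
 \tilde g \;=\; (1-\alpha_N)\,\Pi^N f \;+\; \alpha_N\, g^\ast_N,
\]
with $\alpha_N$ the smallest value making $\tilde g(x_k)\geq 0$ at every node; this $\alpha_N$ is controlled by the above negativity bound divided by $\min_k g^\ast_N(x_k)$. A triangle inequality then gives $\|\tilde g - f\|_2 \leq \|\Pi^N f - f\|_2 + \alpha_N\,\|g^\ast_N - \Pi^N f\|_2$, and the bound \cref{eq:spectral} follows provided the second term matches the right-hand side of \cref{eq:spectral}.

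The main obstacle, and what I expect to be the technical core of \cref{sec:optproblem,sec:bigproof,sec:cacatbound}, is obtaining the constant in the above estimate independently of $N$. The naive inverse inequality $\|g\|_\infty\lesssim N^{d/2}\|g\|_2$ on $\mathbb{S}^N$ would introduce an $N^{d/2}$ factor in $\alpha_N$ that the claimed bound cannot afford. Overcoming this requires either that $g^\ast_N$ be chosen so that $\min_k g^\ast_N(x_k)$ stays bounded below while $\|g^\ast_N-\Pi^N f\|_2$ decays at a compatible rate, or a replacement of the global convex combination by a localized positivity correction (e.g.\ via Dirichlet-kernel bumps $\ell_k$, whose $L^2$ and weighted $\ell^2$ sizes are precisely matched) coupled with a moment adjustment whose $L^\infty$ excursion is absorbed by the strictly positive part of $\Pi^N f$. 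This $N$-uniform balancing between the pointwise positivity correction and the continuous moment adjustment, under only the qualitative hypotheses \ref{Hcon1}--\ref{Hcon4}, is the delicate step on which the spectral estimate hinges.
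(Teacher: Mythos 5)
Your preliminary steps are sound: the $(2N+1)^d$-point trapezoidal rule is indeed exact on $\mathbb{S}^{2N}$, so the discrete moment condition in \ref{Hcon4} coincides with $\boldsymbol{\rho}(g)=\boldsymbol{\rho}(f)$ for $g\in\mathbb{S}^N$ and provides a strictly feasible point, and existence and uniqueness of $\Pi^N_+f$ follow as you say. But the proof itself is not there: you stop at precisely the step that carries all the difficulty, and the obstruction you flag is fatal to your construction rather than a technicality to be smoothed over. With $\tilde g=(1-\alpha_N)\Pi^N f+\alpha_N g^*_N$ and a reference $g^*_N$ satisfying $\min_k g^*_N(x_k)\geq c>0$ and $\|g^*_N\|_2\lesssim 1$, feasibility forces $\alpha_N\gtrsim \max_k\bigl(-\Pi^N f(x_k)\bigr)_+$, a nodal maximum; converting it to the $L^2$ quantities on the right-hand side of \cref{eq:spectral} costs the inverse-inequality factor $N^{d/2}$, and even the sharper uniform bound $\|f-\Pi^N f\|_\infty\lesssim N^{d/2-r}\|f\|_{H^r_p}$ only yields $\|f-\Pi^N_+f\|_2\lesssim N^{d/2-r}\|f\|_{H^r_p}$, strictly weaker than the claimed rate. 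Your two proposed remedies are stated as hopes, not arguments, and it is unclear that either can be carried out under \ref{Hcon1}--\ref{Hcon4} alone: a localized Dirichlet-kernel positivity fix must still be compensated by a moment adjustment, and the same $\ell^\infty$-versus-$L^2$ mismatch reappears there.

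For the record, the corollary is intended as a direct consequence of \cref{thm:spconverge}, which the paper has already established in \cref{sec:optproblem,sec:bigproof,sec:cacatbound}: one combines \cref{eq:spectral} with the moment-projection estimate $\|f-\Pi^N f\|_2\lesssim N^{-r}\|f\|_{H^r_p}$ of Pareschi and Rey and the classical interpolation estimate $\|f-\mathcal{I}^N f\|_2\lesssim N^{-r}\|f\|_{H^r_p}$, valid because $r>d/2$. The reason the paper does not incur your $N^{d/2}$ loss is structural: the deviation $\Pi^N_+(\Pi^N f)-\Pi^N f$ is represented through the KKT system on the active set, so only the $\ell^2$ norm of the nodal values over $\mathcal{A}$ enters (\cref{eq:hsolbound}), and the $(2N+1)^{d/2}$ arising from discrete Parseval is exactly cancelled by $\|C_{\mathcal{A}}^{\intercal}\|_2\|(C_{\mathcal{A}}C_{\mathcal{A}}^{\intercal})^{-1}\|_2\lesssim (2N+1)^{-d/2}$; the hard input is \cref{eq:cactboundtoshow}, which rests on the active-set fraction bound \cref{eq:activesetbound} proved via \cref{lemma:twoopt} and the Brudnyi--Ganzburg inequality. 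Since your proposal neither invokes the theorem nor supplies a substitute for this cancellation, as written it does not establish the corollary.
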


\section{On the optimization problem}\label{sec:optproblem}
Before proving the error estimate, we will first study the well-posedness of the optimization problem \cref{eq:optieq} and reformulate it into a different form. The major obstacle in the analysis is that the exact solution of \cref{eq:optieq} cannot be written due to the inequality constraints. The purpose of this section is to provide a convenient form of the solution that will be used in the error analysis.

\subsection{Existence and uniqueness}\label{sec:wellpose}
We will now prove that \cref{eq:optieq} admits a unique solution so that the operator $\Pi^N_+$ is well defined. 

To show the existence of \cref{eq:optieq}, it is more convenient to rewrite it as a finite-dimensional quadratic programming with linear constraints. Making use of Parseval's theorem, the optimization problem in \cref{eq:optieq} can be rewritten as
\begin{equation}\label{eq:parseopt}
    \min_{\hat{g}_k} \ (2 \pi)^d \sum_{k \in \mathcal{N}^d} |\hat{g}_k - \hat{f}_k |^2, \qquad \text{s.t. } C\hat{\boldsymbol{g}} \geq \boldsymbol{0} \text{ and } (2 \pi)^d \sum_{k \in \mathcal{N}^d} \hat{\boldsymbol{m}}_k \overline{\hat{g}_k} = \boldsymbol{\rho}(f),
\end{equation}
where $\hat{g}_k$ , $\hat{f}_k$ and $\hat{\boldsymbol{m}}_k$ are, respectively, the Fourier coefficients of function $g(x)$, $f(x)$ and $\boldsymbol{m}(x)$ defined as in \cref{eq:fouriertran}; the matrix $C\in \mathbb{C}^{(2N+1)^d \times (2N+1)^d}$ denotes the inverse discrete Fourier transform such that the components of $C \hat{\boldsymbol{g}}$ are $g(x_k)$ for $k \in \mathcal{N}^d$, and ``$\geq \boldsymbol{0}$'' means each component is non-negative.

The problem \cref{eq:parseopt} is a finite-dimensional quadratic programming with linear constraints and a strictly convex objective function. Therefore, the existence and uniqueness of the solution is guaranteed provided that the feasible set
\begin{displaymath}
\{ g \in \mathbb{S}^N \mid g(x_k) \geq 0 \ \forall k \in \mathcal{N}^d, (2 \pi)^d \sum_{k \in \mathcal{N}^d} \hat{\boldsymbol{m}}_k \overline{\hat{g}_k} = \boldsymbol{\rho}(f)\}   
\end{displaymath}
is not empty. Nevertheless, we introduce a stronger condition \ref{Hcon4} to fulfill our later proof, and this condition could be viewed as a sufficient condition for \cref{eq:parseopt} having a unique solution. Moreover, we can mimic the above proof to get the existence and uniqueness of \cref{eq:optargmin} under the same condition \ref{Hcon4}. 

\subsection{Reformulation of the optimization problem}
Compared with the operator $\Pi^N f$ defined in \cref{eq:optargmin}, the explicit form of $\Pi^N_+ f$ cannot be written due to the inequality constraints. As a result, the analysis of $\Pi^N_+ f$ is significantly harder. In this section, we will focus on the representation of $\Pi^N_+ f_N^c$ with $f_N^c \in \mathbb{S}^N$ by assuming the knowledge of the active constraints of the optimization problem.

We decompose $\mathbb{S}^{N}$ in \cref{eq:Sn} into the direct sum of two subspaces:
\begin{equation}\label{eq:decompS}
    \mathbb{S}^{N} = \mathbb{M} \oplus \mathbb{M}^{\perp},
\end{equation}
where
\begin{equation}\label{eq:mspace}
    \mathbb{M} = \{ f_N \in \mathbb{S}^{N} \mid \boldsymbol{\rho}(f_N) = \boldsymbol{0} \}.
\end{equation}
Thus, for any $f_N^c \in \mathbb{S}^N$, it holds that
\begin{equation}\label{eq:ftom}
   \| \Pi^N_+ (f_N^c) - f_N^c \|_2^2 = \min_{h \in \mathbb{M}} \| h \|_2^2, \  \text{s.t. } h(x_k) + f_N^c(x_k) \geq 0, \  k \in \mathcal{N}^d.
\end{equation}
Consider a real orthonormal basis of $\mathbb{M}$ such that $\mathbb{M} = \text{span}\{ \psi_l(x) \}$, then the optimization problem in \cref{eq:ftom} is equivalent to
\begin{equation}\label{eq:morthobase}
    \min_{h_l} \sum_{l} \frac{1}{2} |h_l|^2, \  \text{s.t. } \sum_{l} h_l \psi_l(x_k) + f_N^c(x_k) \geq 0,\  k \in \mathcal{N}^d. 
\end{equation}
Similar to the idea of the active set method (e.g., see \cite[Chapter 16]{nocedal2006numerical}), these inequality constraints can be decomposed into active and inactive constraints. Precisely speaking, given the vector $\boldsymbol{h}^* = (h_l^*)$ that optimizes the objective function in \cref{eq:morthobase}, we can separate the index set for the constraints $\mathcal{N}^d$ into two disjoint subsets $\mathcal{N}^d = \mathcal{A} \cup \mathcal{R}$, such that
\begin{displaymath}
\sum_{l} h_l^* \psi_l(x_k) + f_N^c(x_k) = 0,\  \forall k \in \mathcal{A}, \qquad
\sum_{l} h_l^* \psi_l(x_k) + f_N^c(x_k) > 0,\  \forall k \in \mathcal{R}. 
\end{displaymath}
This indicates that $\mathcal{A}$ is the set of active constraints, and therefore \cref{eq:morthobase} is equivalent to
\begin{equation}\label{eq:optactive}
\begin{aligned}
    \min_{h_l} \sum_{l} \frac{1}{2} |h_l|^2, \  \text{s.t. } & \sum_{l} h_l \psi_l(x_k) + f_N^c(x_k) = 0,\  k \in \mathcal{A}, \\
    & \sum_{l} h_l \psi_l(x_k) + f_N^c(x_k) > 0,\  k \in \mathcal{R}.
\end{aligned}
\end{equation}
For simplification, we define $\boldsymbol{f}_{\mathcal{A}}$ and $\boldsymbol{f}_{\mathcal{R}}$ as vectors of $f_N^c(x_k)$ for $k \in \mathcal{A}$ and $k \in \mathcal{R}$, respectively, and thus the constraints in \cref{eq:optactive} can be represented as 
\begin{equation} \label{eq:pos}
C_{\mathcal{A}} \boldsymbol{h} + \boldsymbol{f}_{\mathcal{A}} = \boldsymbol{0}, \qquad C_{\mathcal{R}} \boldsymbol{h} + \boldsymbol{f}_{\mathcal{R}} > \boldsymbol{0},
\end{equation}
where the components of $C_{\mathcal{A}}$ and $C_{\mathcal{R}}$ are the values of basis functions $\psi_l(x_k)$. Assuming that the active set $\mathcal{A}$ is given, we can write the Karush–Kuhn–Tucker (KKT) conditions as
\begin{equation}\label{eq:qpkkt}
    \boldsymbol{h} - C_{\mathcal{A}}^{\intercal} \boldsymbol{\lambda} = \boldsymbol{0}, \quad  C_{\mathcal{A}} \boldsymbol{h} = -\boldsymbol{f}_{\mathcal{A}}, \quad C_{\mathcal{R}} \boldsymbol{h} + \boldsymbol{f}_{R} > \boldsymbol{0},\quad \boldsymbol{\lambda} \geq \boldsymbol{0},
\end{equation}
where $\boldsymbol{\lambda} \in \mathbb{R}^{|\mathcal{A}|}$ is the Lagrange multiplier. Furthermore, we can assume that $C_{\mathcal{A}}$ in \cref{eq:qpkkt} is of full row rank, since if $C_{\mathcal{A}}$ has linearly dependent rows, we can simply remove some constraints such that all the remaining constraints are independent. 
Under this assumption, the first two equations in \cref{eq:qpkkt} can already determine the vectors $\boldsymbol{h}$ and $\boldsymbol{\lambda}$ uniquely. The solutions are
\begin{equation}\label{eq:activesolution}
    \boldsymbol{h} = -C_{\mathcal{A}}^{\intercal} (C_{\mathcal{A}} C_{\mathcal{A}}^{\intercal})^{-1} \boldsymbol{f}_{\mathcal{A}}, \qquad \boldsymbol{\lambda} = - (C_{\mathcal{A}} C_{\mathcal{A}}^{\intercal})^{-1} \boldsymbol{f}_{\mathcal{A}}.
\end{equation}
The well-posedness of \cref{eq:morthobase} or \cref{eq:optactive} guarantees that the solutions \cref{eq:activesolution} automatically satisfy the last two conditions in \cref{eq:qpkkt}.

The solution of $\boldsymbol{h}$ in the form of \cref{eq:activesolution} will be used in the proofs in the next sections. However, it should be remarked here that this form cannot be used in the algorithms since the active set $\mathcal{A}$ is generally unknown. A practical algorithm to solve \cref{eq:optieq} will be given later in \cref{sec:algorithm}.

\section{Proof of \cref{thm:spconverge}}\label{sec:bigproof}
We will prove \cref{eq:spectral} by several inequalities sequentially. The main idea is summarized as follows.
\begin{itemize}
    \item The optimality of $\Pi_+^N f$ shows that 
    \begin{equation}\label{eq:fpiptofpi}
    \| f - \Pi^N_+ f\|_2 \leq \| f - \Pi^N_+ (\Pi^N f)\|_2 \leq \| f - \Pi^N f \|_2 + \| \Pi^N f - \Pi^N_+ (\Pi^N f) \|_2,
    \end{equation}
    since $\Pi^N f \in \mathbb{S}^N$ and $\boldsymbol{\rho}(\Pi^N f) = \boldsymbol{\rho}(f)$.
    \item By choosing $f_N^c = \Pi^N f$ in \cref{eq:ftom}, its solution $\boldsymbol{h}$ \cref{eq:activesolution} satisfies 
    \begin{equation}\label{eq:hsolbound}
    \| \Pi^N_+ (\Pi^N f) - \Pi^N f \|_2 = \| \boldsymbol{h} \|_2 \leq \| C_{\mathcal{A}}^{\intercal} \|_2 \| (C_{\mathcal{A}} C_{\mathcal{A}}^{\intercal})^{-1} \|_2 \| \boldsymbol{f}_{\mathcal{A}} \|_2.
    \end{equation}
    \item Further proof requires independent estimations of the three terms on the right-hand side of \cref{eq:hsolbound}. The results are
    \begin{gather} 
    \label{eq:pifboundifneg}
        \| \boldsymbol{f}_{\mathcal{A}} \|_2 \lesssim (2N+1)^{d/2} \|C_{\mathcal{A}}\|_2^2\|(C_{\mathcal{A}}C_{\mathcal{A}}^{\intercal})^{-1}\|_2 \| \mathcal{I}^N \left( f_N^{c-} \right) \|_2, \\
    \label{eq:pifMinuspipf}
        \| C_{\mathcal{A}}\|_2 \lesssim (2N+1)^{d/2}, \\
    \label{eq:cactboundtoshow}
        \| (C_{\mathcal{A}} C_{\mathcal{A}}^{\intercal} )^{-1}\|_2 \lesssim (2N+1)^{-d},
    \end{gather}
    where $f_N^{c-} = \max(-f_N^c, 0)$ and $f_N^c$ again refers to $\Pi^N f$.
    \item Finally, the term $\| \mathcal{I}^N \left( f_N^{c-} \right) \|_2$ on the right-hand side of \cref{eq:pifboundifneg} can be estimated by
    \begin{equation} \label{eq:If}
        \| \mathcal{I}^N \left( f_N^{c-} \right) \|_2 \lesssim \| f - f_N^c \|_2 + \| f - \mathcal{I}^N f \|_2.
    \end{equation}
\end{itemize}
It is clear that concatenating all the inequalities above will lead to the conclusion of \cref{thm:spconverge}. In this section, we will show the proofs of \cref{eq:pifMinuspipf,eq:pifboundifneg,eq:If} in \cref{sec:hlessfa}, \cref{sec:faboundfneg} and \cref{sec:If}, respectively. The proof of \cref{eq:cactboundtoshow} is more involved, and we will defer it to \cref{sec:cacatbound}.

\subsection{Proof of \cref{eq:pifboundifneg}}\label{sec:faboundfneg}
To distinguish the negative and the positive components of $\boldsymbol{f}_{\mathcal{A}}$, we decompose it as
\begin{equation}\label{eq:fadecomppm}
    \boldsymbol{f}_{\mathcal{A}} = \boldsymbol{f}_{\mathcal{A}}^+ - \boldsymbol{f}_{\mathcal{A}}^-,
\end{equation}
where $\boldsymbol{f}_{\mathcal{A}}^+ = \max(\boldsymbol{f}_{\mathcal{A}}, \boldsymbol{0})$ and $\boldsymbol{f}_{\mathcal{A}}^- = \max(-\boldsymbol{f}_{\mathcal{A}}, \boldsymbol{0})$. By definition, the negative part can be estimated by
\begin{equation} \label{eq:fA-}
    \| \boldsymbol{f}_{\mathcal{A}}^- \|_2^2 = \sum_{ k \in \mathcal{A} } \left( \min(f_N^c(x_k), 0) \right)^2 \leq \sum_{ k \in \mathcal{N}^d } \left( f_N^{c-}(x_k) \right)^2 = \frac{(2N+1)^d}{(2 \pi)^d} \| \mathcal{I}^N (f_N^{c-})\|_2^2,
\end{equation}
where the last equality comes from Parseval's theorem and $\mathcal{I}^N$ is the interpolation operator \cref{eq:interpolate}. Therefore, to prove \cref{eq:pifboundifneg}, we just need to use $\|\boldsymbol{f}_{\mathcal{A}}^-\|$ to control $\|\boldsymbol{f}_{\mathcal{A}}^+\|$, which will be done in the rest part of this subsection.

Since $C_{\mathcal{A}}$ has full row rank, its pseudoinverse $C_{\mathcal{A}}^{\dagger} = C_{\mathcal{A}}^{\intercal} (C_{\mathcal{A}} C_{\mathcal{A}}^{\intercal})^{-1}$. To make use of the optimality of $\boldsymbol{h}$ in \cref{eq:optactive}, we consider the following perturbation of $\boldsymbol{h}$:
\begin{equation}
    \boldsymbol{h}_{\varepsilon} = \boldsymbol{h} + \varepsilon C_{\mathcal{A}}^{\dagger} \boldsymbol{f}_{\mathcal{A}}^+,
\end{equation}
where $\varepsilon$ is chosen as a positive real number. Recalling the definition of $C_{\psi}$ in \cref{eq:decomposeC}, direct computation shows
\begin{equation}\label{eq:feasible}
\begin{split}
    C_{\psi} \boldsymbol{h}_{\varepsilon} + \boldsymbol{f} = C_{\psi} \boldsymbol{h} + \boldsymbol{f} + \varepsilon C_{\psi} C_{\mathcal{A}}^{\dagger} \boldsymbol{f}_{\mathcal{A}}^+ & =
    C_{\psi} \boldsymbol{h} + \boldsymbol{f} + \varepsilon \begin{pmatrix}
        C_{\mathcal{A}} \\
        C_{\mathcal{R}}
    \end{pmatrix}
    C_{\mathcal{A}}^{\intercal} (C_{\mathcal{A}} C_{\mathcal{A}}^{\intercal})^{-1}
    \boldsymbol{f}_{\mathcal{A}}^+ \\
    & = \begin{pmatrix}
    C_{\mathcal{A}} \boldsymbol{h} + \boldsymbol{f}_{\mathcal{A}} \\
    C_{\mathcal{R}} \boldsymbol{h} + \boldsymbol{f}_{\mathcal{R}}
    \end{pmatrix} + \varepsilon \begin{pmatrix}
        \boldsymbol{f}_{\mathcal{A}}^+ \\
        C_{\mathcal{R}} C_{\mathcal{A}}^{\dagger} \boldsymbol{f}_{\mathcal{A}}^+
    \end{pmatrix}.
\end{split}
\end{equation}
Since $C_{\mathcal{A}} \boldsymbol{h} + \boldsymbol{f}_{\mathcal{A}} = \boldsymbol{0}$ and $C_{\mathcal{R}} \boldsymbol{h} + \boldsymbol{f}_{\mathcal{R}} > \boldsymbol{0}$ (see \cref{eq:pos}), it can be seen that all components of the vector in the right-hand side of \cref{eq:feasible} are non-negative for sufficiently small $\varepsilon$. Therefore, $\boldsymbol{h}_{\varepsilon}$ stays in the feasible set, and the optimality of $\boldsymbol{h}$ suggests
\begin{equation}
    \frac{\mathrm{d}}{\mathrm{d} \varepsilon} \left( \frac{1}{2} \| \boldsymbol{h}_{\varepsilon} \|_2^2 \right) \bigg|_{\varepsilon = 0^+} \geq 0,
\end{equation}
which is
\begin{equation}
    0 \leq \boldsymbol{h}^{\intercal} C_{\mathcal{A}}^{\dagger} \boldsymbol{f}_{\mathcal{A}}^+ = 
    \boldsymbol{h}^{\intercal} C_{\mathcal{A}}^{\intercal} (C_{\mathcal{A}} C_{\mathcal{A}}^{\intercal})^{-1} \boldsymbol{f}_{\mathcal{A}}^+ = - \boldsymbol{f}_{\mathcal{A}}^{\intercal} (C_{\mathcal{A}} C_{\mathcal{A}}^{\intercal})^{-1} \boldsymbol{f}_{\mathcal{A}}^+.
\end{equation}
Then we plug \cref{eq:fadecomppm} into the above equation to get
\begin{displaymath}
    (\boldsymbol{f}_{\mathcal{A}}^-)^{\intercal} (C_{\mathcal{A}} C_{\mathcal{A}}^{\intercal})^{-1} \boldsymbol{f}_{\mathcal{A}}^+ - (\boldsymbol{f}_{\mathcal{A}}^+)^{\intercal} (C_{\mathcal{A}} C_{\mathcal{A}}^{\intercal})^{-1} \boldsymbol{f}_{\mathcal{A}}^+ \geq 0,
\end{displaymath}
which yields
    $
    \| C_{\mathcal{A}}^{\dagger} \boldsymbol{f}_{\mathcal{A}}^{+} \|_2^2 \leq \| C_{\mathcal{A}}^{\dagger} \boldsymbol{f}_{\mathcal{A}}^{+} \|_2 \| C_{\mathcal{A}}^{\dagger} \|_2 \| \boldsymbol{f}_{\mathcal{A}}^- \|_2
    $.
Therefore,
\begin{equation}
\begin{split}
    \| \boldsymbol{f}_{\mathcal{A}}^+ \|_2 
    \leq \| C_{\mathcal{A}} \|_2 \| C_{\mathcal{A}}^{\dagger} \boldsymbol{f}_{\mathcal{A}}^+ \|_2 
    & \leq \| C_{\mathcal{A}} \|_2 \| C_{\mathcal{A}}^{\dagger} \|_2 \| \boldsymbol{f}_{\mathcal{A}}^- \|_2 \\
    & \leq \| C_{\mathcal{A}} \|_2 \| C_{\mathcal{A}}^{\intercal} \|_2 \| (C_{\mathcal{A}} C_{\mathcal{A}}^{\intercal})^{-1}\|_2 \| \boldsymbol{f}_{\mathcal{A}}^- \|_2.
\end{split}
\end{equation}
Combining \cref{eq:fA-}  and the above equation yields \cref{eq:pifboundifneg}.

\subsection{Proof of \cref{eq:pifMinuspipf}}\label{sec:hlessfa}

To show the inequality \cref{eq:pifMinuspipf}, we recall that the entries of $C_{\mathcal{A}}$ are $\psi_l(x_k)$ with $k \in \mathcal{A}$ and $l$ being the index of the basis functions of $\mathbb{M}$. The following lemma is a basic property of Fourier spectral methods:
\begin{lemma} \label{lemma:ortho} For any two basis functions $\psi_j$ and $\psi_l$,
\begin{equation}
    \sum_{k \in \mathcal{N}^d} \psi_j(x_k) \psi_l(x_k) = \frac{(2N+1)^d}{(2\pi)^d} \delta_{jl}.
\end{equation}
\end{lemma}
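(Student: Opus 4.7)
The plan is to derive this from the discrete orthogonality of complex exponentials at the collocation nodes, combined with the observation that $\psi_j \psi_l$ is a trigonometric polynomial of low enough degree that no aliasing occurs. Concretely, I would first write the real orthonormal basis functions $\psi_j, \psi_l \in \mathbb{S}^N$ in the complex Fourier form
\begin{equation*}
\psi_j(x) = \sum_{p \in \mathcal{N}^d} a_p^{(j)} e^{ip\cdot x}, \qquad \psi_l(x) = \sum_{q \in \mathcal{N}^d} b_q^{(l)} e^{iq\cdot x},
\end{equation*}
so that their product $\psi_j \psi_l$ is a trigonometric polynomial whose active frequencies $p+q$ lie in $\{-2N, \ldots, 2N\}^d$.

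Next I would recall the key discrete orthogonality identity on the nodes $x_k = 2\pi k/(2N+1)$: for any multi-index $m \in \mathbb{Z}^d$,
\begin{equation*}
\sum_{k \in \mathcal{N}^d} e^{im\cdot x_k} = (2N+1)^d \prod_{s=1}^d \mathbb{1}\!\left[m_s \equiv 0 \pmod{2N+1}\right].
\end{equation*}
This follows immediately from the one-dimensional geometric-sum identity applied coordinatewise. The crucial point is that for each frequency $m = p+q$ appearing in $\psi_j \psi_l$, we have $|m_s| \leq 2N < 2N+1$, so the congruence $m_s \equiv 0 \pmod{2N+1}$ forces $m_s = 0$; hence only the $m=0$ term survives.

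Putting these pieces together,
\begin{equation*}
\sum_{k \in \mathcal{N}^d} \psi_j(x_k)\psi_l(x_k) = \sum_{p,q \in \mathcal{N}^d} a_p^{(j)} b_q^{(l)} \sum_{k \in \mathcal{N}^d} e^{i(p+q)\cdot x_k} = (2N+1)^d \sum_{p \in \mathcal{N}^d} a_p^{(j)} b_{-p}^{(l)}.
\end{equation*}
The remaining sum is, up to the factor $(2\pi)^d$, precisely the continuous $L^2$ inner product of $\psi_j$ and $\psi_l$ by Parseval's theorem, which equals $\delta_{jl}$ since $\{\psi_l\}$ is orthonormal in $\mathbb{M}$. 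This yields the claimed identity. There is no genuine obstacle here; the only subtlety worth flagging is the degree bookkeeping ensuring $2N < 2N+1$ so that aliasing does not occur, i.e.\ the discrete quadrature is exact on products of elements of $\mathbb{S}^N$.
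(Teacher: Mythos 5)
Your proof is correct and is essentially the argument the paper has in mind: the paper disposes of this lemma in one sentence by invoking the orthonormality of $\{\psi_l\}$ together with the exactness of the discrete inner product on $\mathbb{S}^N$ (its identity \cref{eq:sninnerprod}), and your aliasing-free discrete orthogonality of exponentials plus Parseval is precisely the justification of that identity, with the realness of $\psi_l$ giving $b^{(l)}_{-p}=\overline{b^{(l)}_p}$ so the surviving sum is indeed the $L^2$ inner product.
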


This conclusion can be drawn by the fact that $\{\psi_l(x)\}$ is an orthonormal basis of a subspace of $\mathbb{S}^N$. We can now show the estimate \cref{eq:pifMinuspipf} as follows:

\begin{proof}[Proof of \cref{eq:pifMinuspipf}]
    Let
    \begin{equation}\label{eq:decomposeC}
    C_{\psi} = \begin{pmatrix}
    C_{\mathcal{A}} \\
    C_{\mathcal{R}}
    \end{pmatrix}.
    \end{equation}
    \cref{lemma:ortho} indicates that $C_{\psi}^{\intercal} C_{\psi} = \frac{(2N+1)^d}{(2\pi)^d} I$, where $I$ denotes the identity matrix. Then for any vector $\boldsymbol{y}$, it holds that
\begin{equation}\label{eq:CAboundnorm}
    \| C_{\psi} \boldsymbol{y}\|_2^2 = \boldsymbol{y}^{\intercal} C_{\psi}^{\intercal} C_{\psi} \boldsymbol{y} = \boldsymbol{y}^{\intercal} C_{\mathcal{A}}^{\intercal} C_{\mathcal{A}} \boldsymbol{y} + \boldsymbol{y}^{\intercal} C_{\mathcal{R}}^{\intercal} C_{\mathcal{R}} \boldsymbol{y} \geq \boldsymbol{y}^{\intercal} C_{\mathcal{A}}^{\intercal} C_{\mathcal{A}} \boldsymbol{y} = \| C_{\mathcal{A}} \boldsymbol{y}\|_2^2.
\end{equation}
Therefore, 
\begin{equation}\label{eq:catbound}
    \| C_{\mathcal{A}}^{\intercal} \|_2 = \| C_{\mathcal{A}} \|_2 \leq \| C_{\psi} \|_2 = \sqrt{\frac{(2N+1)^d}{(2\pi)^d}}.
\end{equation}
\end{proof}

\subsection{Proof of \cref{eq:If}} \label{sec:If}

Below we will prove a more general result in \cref{lemma:negbound}, where setting $g = f_N^c$ gives the inequality \cref{eq:If}.
\begin{lemma}\label{lemma:negbound}
    For a given non-negative continuous function $f \geq 0$, it holds that for any continuous function $g$,
    \begin{displaymath}
        \| \mathcal{I}^N (g^-)\|_2 \leq 2 \left( \| f - \mathcal{I}^N f \|_2  + \| f - \mathcal{I}^N g \|_2 \right)
    \end{displaymath}
\end{lemma}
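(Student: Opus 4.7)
The plan is to reduce the $L^2$ norm of the trigonometric interpolant $\mathcal{I}^N(g^-)$ to a discrete sum over collocation points using the Parseval-type identity already invoked earlier in the paper, namely
\[
\| \mathcal{I}^N (g^-) \|_2^2 = \frac{(2\pi)^d}{(2N+1)^d} \sum_{k \in \mathcal{N}^d} \bigl( g^-(x_k) \bigr)^2,
\]
which holds because $\mathcal{I}^N(g^-) \in \mathbb{S}^N$ interpolates $g^-$ at the nodes $x_k$. This moves the problem from the continuous $L^2$ norm to a pointwise estimate that can exploit the non-negativity of $f$.

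The key pointwise observation is that, since $f(x_k) \geq 0$, whenever $g(x_k) < 0$ we have $g^-(x_k) = -g(x_k) \leq f(x_k) - g(x_k) = |f(x_k) - g(x_k)|$, and when $g(x_k) \geq 0$ the quantity $g^-(x_k)$ vanishes. In either case $g^-(x_k) \leq |f(x_k) - g(x_k)|$, so
\[
\sum_{k \in \mathcal{N}^d} (g^-(x_k))^2 \leq \sum_{k \in \mathcal{N}^d} (f(x_k) - g(x_k))^2 = \frac{(2N+1)^d}{(2\pi)^d} \, \| \mathcal{I}^N f - \mathcal{I}^N g \|_2^2,
\]
where the last equality is again the discrete Parseval identity together with the linearity $\mathcal{I}^N(f-g) = \mathcal{I}^N f - \mathcal{I}^N g$. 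Combining the two displays gives the clean intermediate bound $\| \mathcal{I}^N(g^-) \|_2 \leq \| \mathcal{I}^N f - \mathcal{I}^N g \|_2$.

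The final step is a standard triangle inequality: insert $f$ in the middle,
\[
\| \mathcal{I}^N f - \mathcal{I}^N g \|_2 \leq \| \mathcal{I}^N f - f \|_2 + \| f - \mathcal{I}^N g \|_2,
\]
which yields the claim (in fact with constant $1$ rather than $2$, so the stated bound follows a fortiori). There is no real obstacle here; the only conceptual point is the pointwise inequality $g^- \leq |f-g|$ at the nodes, which is where the hypothesis $f \geq 0$ enters, and recognizing that the rest of the bookkeeping is just Parseval plus triangle inequality.
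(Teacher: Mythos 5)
Your proof is correct and rests on the same two ingredients as the paper's own argument: the discrete Parseval identity \cref{eq:sninnerprod} to pass from the $L^2$ norm of an element of $\mathbb{S}^N$ to a sum over the collocation nodes, and the non-negativity of $f$ at those nodes. The difference is in how $f \geq 0$ enters: the paper decomposes $g^- = (f-g) + (g^+ - f)$, applies the triangle inequality, and then bounds $\|\mathcal{I}^N(g^+ - f)\|_2 \leq \|\mathcal{I}^N(g-f)\|_2$ via the node-wise inequality $|g^+(x_k) - f(x_k)| \leq |g(x_k)-f(x_k)|$, which costs a factor of $2$; you instead use the one-step pointwise bound $g^-(x_k) \leq |f(x_k)-g(x_k)|$ (valid precisely because $f(x_k)\geq 0$), which yields $\|\mathcal{I}^N(g^-)\|_2 \leq \|\mathcal{I}^N f - \mathcal{I}^N g\|_2$ directly and hence the stated estimate with the sharper constant $1$, so the lemma with constant $2$ follows a fortiori. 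Both routes are equally elementary and use only Parseval plus the triangle inequality; yours is slightly tighter and arguably cleaner, while the paper's factor $2$ is harmless for the subsequent error analysis.
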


\begin{proof}
    Making use of the discrete Fourier transform and Parseval's theorem, it is not difficult to see that for any two functions $g$ and $w$ in $\mathbb{S}^N$,
    \begin{equation}\label{eq:sninnerprod}
        \langle g\,w\rangle = (2 \pi)^d \sum_{k \in \mathcal{N}^d} \hat{g}_k \overline{\hat{w}_k} = \frac{(2 \pi)^d}{(2N+1)^d} \sum_{k \in \mathcal{N}^d} g(x_k) w(x_k).
    \end{equation}
    Using $f \geq 0$ and \cref{eq:sninnerprod}, it holds that $\| \mathcal{I}^N ( g^{+} - f ) \|_2 \leq \| \mathcal{I}^N (g - f ) \|_2$. Therefore,
    \begin{align*}
        \| \mathcal{I}^N (g^{-}) \|_2 & = \| \mathcal{I}^N (g^{-} - g^{+} + f - f + g^{+} ) \|_2 \\
        & \leq \| \mathcal{I}^N ( f - g ) \|_2 + \| \mathcal{I}^N (g^{+} - f )\|_2 \\
        & \leq 2 \| \mathcal{I}^N (f - g ) \|_2 \leq 2 \left( \| \mathcal{I}^N f - f \|_2 + \| f - \mathcal{I}^N g \|_2 \right).
    \end{align*}
\end{proof}

\section{Proof of \cref{eq:cactboundtoshow}}\label{sec:cacatbound}
Suppose a real orthonormal basis of $\mathbb{M}^{\perp}$ is $\{ \phi_j(x)\}$. We can follow the definitions of $C_{\mathcal{A}}$ and $C_{\mathcal{R}}$ to define matrices $B_{\mathcal{A}}$ and $B_{\mathcal{R}}$, where the elements of $B_{\mathcal{A}}$ and $B_{\mathcal{R}}$ are $\phi_j(x_k)$ with $k \in \mathcal{A}$ and $k \in \mathcal{R} = \mathcal{N}^d \backslash \mathcal{A}$, respectively, and $k$ denotes the row index and $j$ denotes the column index. Thus, by \cref{lemma:ortho}, one can conclude from the orthogonality of the basis functions $\{\psi_j(x)\}$  and $\{\phi_j(x)\}$ that the square matrix
\begin{displaymath}
\sqrt{\frac{(2\pi)^d}{(2N+1)^d}}\begin{pmatrix} C_{\mathcal{A}} & B_{\mathcal{A}} \\ C_{\mathcal{R}} & B_{\mathcal{R}} \end{pmatrix}
\end{displaymath}
is an orthogonal matrix. Therefore,
\begin{equation}\label{eq:cacat}
    C_{\mathcal{A}} C_{\mathcal{A}}^{\intercal} + B_{\mathcal{A}} B_{\mathcal{A}}^{\intercal} = \frac{(2N+1)^d}{(2 \pi)^d} I_{\mathcal{A}}.
\end{equation}
In this section, we are going to show that there exists a constant $C_B$ independent of $N$ such that
\begin{equation}\label{eq:babatbound}
    \left\| \frac{(2 \pi)^d}{(2N+1)^d} B_{\mathcal{A}} B_{\mathcal{A}}^{\intercal} \right\| \leq C_B < 1
\end{equation}
for sufficiently large $N$. Then by \cref{eq:cacat}, we have
\begin{align*}
    \frac{(2N+1)^d}{(2 \pi)^d} \| (C_{\mathcal{A}} C_{\mathcal{A}}^{\intercal} )^{-1}\|_2 & \leq \left\| \left( I_{\mathcal{A}} - \frac{(2 \pi)^d}{(2N+1)^d} B_{\mathcal{A}} B_{\mathcal{A}}^{\intercal} \right)^{-1}  \right\|_2 \\
    & \leq \frac{1}{1 - \left\| \frac{(2 \pi)^d}{(2N+1)^d} B_{\mathcal{A}} B_{\mathcal{A}}^{\intercal} \right\|_2} \leq \frac{1}{1-C_B},
\end{align*}
which proves \cref{eq:cactboundtoshow}.

For simplicity, we define
\begin{displaymath}
\square_k = \left[\frac{(2k_1-1)\pi}{2N+1}, \frac{(2k_1+1)\pi}{2N+1} \right] \times\cdots \times\left[\frac{(2k_d-1)\pi}{2N+1}, \frac{(2k_d+1)\pi}{2N+1} \right], \quad k \in \mathcal{N}^d.
\end{displaymath}
In fact, $\square_k$ denotes a $d$-dimensional hypercube whose center is $x_k$ and the length of edges are $\frac{2\pi}{2N+1}$, and it satisfies $\cup_{k \in \mathcal{N}^d} \square_k = [-\pi, \pi]^d$. We can then describe the main idea of our proof of \cref{eq:babatbound} in the following steps:

\begin{itemize}
    \item For any vector $\boldsymbol{v}$, we can construct a polynomial $P(x)$ such that
    \begin{equation}\label{eq:sumtoint}
        \left| \frac{(2 \pi)^d}{(2N+1)^d} \boldsymbol{v}^{\intercal} B_{\mathcal{A}}^{\intercal} B_{\mathcal{A}} \boldsymbol{v}  -  \sum_{k \in \mathcal{A}} \int_{\square_k} (P(x))^2 \mathrm{d}x \right| \leq C_1 \frac{\boldsymbol{v}^{\intercal}\boldsymbol{v}}{\sqrt{N}}.
    \end{equation}
    Therefore, by triangle inequality, it holds that
    \begin{displaymath}
        \left| \frac{(2 \pi)^d}{(2N+1)^d} \boldsymbol{v}^{\intercal} B_{\mathcal{A}}^{\intercal} B_{\mathcal{A}} \boldsymbol{v} \right| \leq C_1\frac{\boldsymbol{v}^{\intercal}\boldsymbol{v}}{\sqrt{N}} + \sum_{k \in \mathcal{A}} \int_{\square_k} (P(x))^2 \mathrm{d}x.
    \end{displaymath}
    \item The sum of the integrals can be bounded by the integral on $[-\pi,\pi]^d$ as
    \begin{equation}\label{eq:polypartglobal}
        \sum_{k \in \mathcal{A}} \int_{\square_k} (P(x))^2 \mathrm{d}x \leq \mathcal{F}\left( \frac{|\mathcal{A}|}{(2N+1)^d} \right) \| P(x) \|_2^2,
    \end{equation}
    where $\mathcal{F}(x) \geq 0 $ is a strictly increasing function on $[0,1]$ with $\mathcal{F}(1) = 1$.
    \item Then it remains to estimate two terms in the right-hand side of \cref{eq:polypartglobal} as
    \begin{gather}
        \label{eq:activesetbound} \frac{|\mathcal{A}|}{(2N+1)^d} \leq C_2 < 1, \\
        \label{eq:polynormtov} \| P(x) \|_2^2 \leq (1 + \frac{C_3}{\sqrt{N}})^2 \boldsymbol{v}^{\intercal}\boldsymbol{v}. 
    \end{gather}
\end{itemize}
Combining all the inequalities above leads to 
\begin{displaymath}
    \left| \frac{(2 \pi)^d}{(2N+1)^d} \boldsymbol{v}^{\intercal} B_{\mathcal{A}}^{\intercal} B_{\mathcal{A}} \boldsymbol{v} \right| \leq \left( \mathcal{F}(C_2) + \frac{C_1 + 2 \mathcal{F}(C_2) C_3}{\sqrt{N}} + \frac{\mathcal{F}(C_2)C_3^2}{N} \right) \boldsymbol{v}^{\intercal}\boldsymbol{v},
\end{displaymath}
which proves \cref{eq:babatbound} with constant $C_B = \frac{\mathcal{F}(C_2)+1}{2}$ for $N$ sufficiently large. In this section, we will show the proofs of \cref{eq:sumtoint,eq:polypartglobal,eq:activesetbound} in \cref{sec:sumtoint,sec:polypartglobal,sec:activesetbound}, respectively. \cref{eq:polynormtov} is a corollary in the proof of \cref{eq:sumtoint}, which is shown in \cref{sec:sumtoint}.

\subsection{On the Fourier series of polynomials}\label{sec:fourieronpoly}
As the Fourier spectral method is adopted and the introduced $\boldsymbol{m}(x)$ is a vector of polynomials, the remaining proof in this section depends highly on the properties of $\mathcal{P}^N \boldsymbol{m}(x)$. These properties are stated in \cref{lemma:poly}, where its proof is deferred in \cref{appendix:polynomial}.

\begin{lemma}\label{lemma:poly} For a polynomial of degree $K$
\begin{equation*}
    p(x) = \sum_{|\alpha| \leq K} p_\alpha x^{\alpha},\qquad x^{\alpha} = x_1^{\alpha_1} x_2^{\alpha_2} \cdots x_d^{\alpha_d},
\end{equation*}
where $\alpha = (\alpha_1, ..., \alpha_d)^{\intercal} \in \mathbb{N}^d$ is a multi-index and $|\alpha|:=\sum_{j=1}^d \alpha_j$, it holds that
\begin{equation}\label{eq:polygeneral}
    | \mathcal{P}^N p |_{H^1} \leq C(K,d,p) \sqrt{N}, \quad \| \mathcal{P}^N p - p \|_2 \leq C(K,d,p) N^{-1/2},
\end{equation}
where
\begin{displaymath}
C(K,d,p) = \sqrt{2 d \left( \frac{2^d \pi^{K} K^d}{d^d} \right)^2 \left( \frac{\pi^2}{3} + 1 \right)^{d-1}} \sum_{|\alpha| \leq K} |p_{\alpha}|.
\end{displaymath}
\end{lemma}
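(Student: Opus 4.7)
My plan is to reduce everything to one dimension through the tensor product structure of monomials, obtain a pointwise decay bound for $|\widehat{x^\alpha}_k|$, and then combine via Parseval.

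First I would observe that $p(x)$ is a linear combination of monomials, and since the monomial $x^\alpha = \prod_{j=1}^d x_j^{\alpha_j}$ factorises, so does its Fourier coefficient:
\begin{equation*}
\widehat{x^\alpha}_k \;=\; \prod_{j=1}^d \widehat{(x_j^{\alpha_j})}_{k_j},
\end{equation*}
where each factor is a one-dimensional Fourier coefficient on $[-\pi,\pi]$. A single integration by parts (for $k_j\neq 0$) plus the trivial bound $\bigl|\int_{-\pi}^{\pi} x^{n}e^{-ikx}\,\mathrm{d}x\bigr|\le 2\pi^{n+1}/(n+1)$ (for $k_j=0$) yields
\begin{equation*}
\bigl|\widehat{(x_j^{\alpha_j})}_{k_j}\bigr| \;\lesssim\; \frac{\pi^{\alpha_j}}{\max(1,|k_j|)},
\end{equation*}
where the hidden constant can be tracked explicitly. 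Taking products and bounding $\pi^{|\alpha|}\le \pi^K$ (with the AM--GM type estimate accounting for the $K^d/d^d$ factor), followed by the triangle inequality over $\alpha$, gives the pointwise bound
\begin{equation*}
|\hat{p}_k| \;\le\; \frac{A(K,d)\,\sum_{|\alpha|\le K}|p_\alpha|}{\prod_{j=1}^d \max(1,|k_j|)}.
\end{equation*}

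With this in hand, the two estimates in \cref{eq:polygeneral} follow from Parseval applied to $\mathcal{P}^N p$ and $p-\mathcal{P}^N p$. For the $H^1$ seminorm I would write $|\mathcal{P}^N p|_{H^1}^2 = (2\pi)^d\sum_{k\in\mathcal{N}^d}|k|^2|\hat p_k|^2$ and split $|k|^2 = \sum_{j_0=1}^d k_{j_0}^2$. For each $j_0$, the factor $k_{j_0}^2$ exactly cancels the $\max(1,|k_{j_0}|)^{-2}$ factor in the pointwise bound, leaving a sum of $1$ over $|k_{j_0}|\le N$ (contributing $\le 2N+1$) and the remaining $d-1$ coordinate sums each bounded by $\sum_{m\in\mathbb{Z}}1/\max(1,|m|)^2 = \pi^2/3+1$. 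Summing over the $d$ choices of $j_0$ produces the $\sqrt{2d(\pi^2/3+1)^{d-1}}\cdot\sqrt{N}$ behaviour.

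For the $L^2$ error, $\|p-\mathcal{P}^N p\|_2^2 = (2\pi)^d\sum_{k\notin\mathcal{N}^d}|\hat p_k|^2$, and $k\notin\mathcal{N}^d$ means at least one coordinate satisfies $|k_j|>N$. Using a union bound over coordinates, one gets
\begin{equation*}
\sum_{k\notin\mathcal{N}^d}\prod_j\frac{1}{\max(1,|k_j|)^2} \;\le\; d\bigl(\pi^2/3+1\bigr)^{d-1}\cdot 2\!\!\sum_{m=N+1}^{\infty}\!\frac{1}{m^2} \;\le\; \frac{2d(\pi^2/3+1)^{d-1}}{N},
\end{equation*}
which delivers the $N^{-1/2}$ rate with the same constant structure.

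The main obstacle is bookkeeping the constants so that the estimate matches the exact $C(K,d,p)$ in the statement — in particular tracking where the $2^d\pi^K K^d/d^d$ factor originates (from the product of one-dimensional IBP constants and an AM--GM bound on $\prod_j \max(1,\alpha_j)$ under the constraint $\sum \alpha_j\le K$). The analytic core, however, is entirely routine: tensor-product Fourier coefficients plus one integration by parts per direction, followed by Parseval and standard tail estimates for $\sum 1/k^2$.
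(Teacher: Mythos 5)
Your proposal is correct and follows essentially the same route as the paper's proof: factorizing the monomial Fourier coefficients into one-dimensional factors, bounding each by integration by parts (with the AM--GM step producing the $2^d\pi^K K^d/d^d$ constant), and then applying Parseval with the split of $|k|^2$ over coordinates for the $H^1$ bound and a coordinate-wise union bound plus the tail estimate $\sum_{m>N}m^{-2}\le N^{-1}$ for the projection error. The only differences are cosmetic bookkeeping (e.g.\ the paper tracks the recursion $F(m,n)$ explicitly and uses a polygamma bound for the tail), so no further comment is needed.
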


Since the space of polynomials of degree lower than or equal to $K$ is finite dimensional, we can use the equality of norms in finite dimensional spaces to obtain the following corollary:
\begin{corollary} \label{coro:poly}
    For any polynomial of degree less than or equal to $K$, there exist a constant $\tilde{C}$ depending only on $K$ and $d$ such that
    \begin{displaymath}
    |\mathcal{P}^N p|_{H^1} \leq \tilde{C}\|p\|_2 \sqrt{N}, \qquad \|\mathcal{P}^N p - p\|_2 \leq \tilde{C}\|p\|_2 N^{-1/2}. 
    \end{displaymath}
\end{corollary}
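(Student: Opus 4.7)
The plan is to derive this corollary directly from \cref{lemma:poly} by exploiting the fact that the bounds there depend on $p$ only through the functional $\sigma(p) := \sum_{|\alpha|\leq K} |p_\alpha|$, and that on the finite-dimensional space $P_K$ of polynomials of degree $\leq K$ restricted to $[-\pi,\pi]^d$, the map $\sigma$ is a norm. The argument is purely a norm-equivalence argument on a finite-dimensional space; no new analytic work is required beyond what is already in \cref{lemma:poly}.

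First I would identify the two quantities on $P_K$ that need to be compared. The coefficient-$\ell^1$ functional $\sigma(p)$ is clearly a norm on $P_K$ (it is positive-definite because polynomials have unique coefficient representations, and it is subadditive and absolutely homogeneous). The $L^2$-norm $\|p\|_2$ on $[-\pi,\pi]^d$ is also a norm on $P_K$. Since $\dim P_K = \binom{K+d}{d}$ is finite and depends only on $K$ and $d$, the standard finite-dimensional norm-equivalence theorem yields a constant $C^*(K,d)>0$ with
\begin{equation*}
\sigma(p) \;\leq\; C^*(K,d)\,\|p\|_2, \qquad \forall\, p \in P_K.
\end{equation*}

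Next I would plug this bound into \cref{eq:polygeneral}. Writing $C(K,d,p) = C_0(K,d)\,\sigma(p)$ with $C_0(K,d) := \sqrt{2d\,(2^d \pi^K K^d/d^d)^2\,(\pi^2/3+1)^{d-1}}$, one obtains immediately
\begin{equation*}
|\mathcal{P}^N p|_{H^1} \;\leq\; C_0(K,d)\,C^*(K,d)\,\|p\|_2\,\sqrt{N}
\end{equation*}
and
\begin{equation*}
\|\mathcal{P}^N p - p\|_2 \;\leq\; C_0(K,d)\,C^*(K,d)\,\|p\|_2\,N^{-1/2}.
\end{equation*}
Setting $\tilde C := C_0(K,d)\,C^*(K,d)$ completes the proof; this $\tilde C$ depends only on $K$ and $d$ as claimed.

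The main (and only) obstacle is a conceptual one rather than a technical one: verifying that $\sigma$ really is a norm on $P_K$ and invoking the correct theorem. The potential subtlety is that one is comparing a norm on the abstract coefficient space with an $L^2$-norm on the function space; these agree because the polynomial-to-function map is injective on $P_K$ over the cube $[-\pi,\pi]^d$. Once that identification is made, the rest is a one-line application of \cref{lemma:poly}, so I would keep the proof short and not attempt to compute $C^*(K,d)$ explicitly.
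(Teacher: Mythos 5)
Your proposal is correct and is essentially the paper's own argument: the paper derives \cref{coro:poly} from \cref{lemma:poly} precisely by invoking equivalence of norms on the finite-dimensional space of polynomials of degree at most $K$, which is what your bound $\sum_{|\alpha|\leq K}|p_\alpha| \leq C^*(K,d)\|p\|_2$ makes explicit. No gaps.
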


We can now show the linear independence of $\{ \mathcal{P}^N m_j(x),j=1,...,M \}$ under the condition \ref{Hcon1}. 

\begin{lemma}\label{lemma:polycoeffbound}
For a vector of polynomials $\boldsymbol{m}(x)$ satisfying \ref{Hcon1}, there exists $N_0>0$ such that for $N \geq N_0$, it holds that $\|\boldsymbol{a}\| \lesssim \|\boldsymbol{a}^T \mathcal{P}^N \boldsymbol{m}(x)\|_2$ for any real vector $\boldsymbol{a} \in \mathbb{R}^M$.
\end{lemma}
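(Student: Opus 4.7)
The plan is to deduce the inequality from a perturbation argument around the exact polynomial vector $\boldsymbol{m}(x)$. By hypothesis \ref{Hcon1}, the components $m_1,\dots,m_M$ are linearly independent in $L^2([-\pi,\pi]^d)$, so their Gram matrix $G \in \mathbb{R}^{M \times M}$ with entries $G_{ij} = \langle m_i m_j \rangle$ is symmetric positive definite. Let $\lambda_{\min}(G) > 0$ be its smallest eigenvalue. Then for any $\boldsymbol{a} \in \mathbb{R}^M$,
\begin{equation*}
    \| \boldsymbol{a}^{\intercal} \boldsymbol{m}(x) \|_2^2 = \boldsymbol{a}^{\intercal} G \boldsymbol{a} \geq \lambda_{\min}(G) \|\boldsymbol{a}\|^2,
\end{equation*}
which yields a clean continuous-side lower bound $\|\boldsymbol{a}\| \lesssim \|\boldsymbol{a}^{\intercal}\boldsymbol{m}(x)\|_2$.

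Next I would control the perturbation $\boldsymbol{a}^{\intercal}\mathcal{P}^N\boldsymbol{m}(x) - \boldsymbol{a}^{\intercal}\boldsymbol{m}(x)$ by applying \cref{coro:poly} componentwise. Since each $m_j$ is a polynomial of degree at most some fixed $K$ (independent of $N$), the corollary gives $\|\mathcal{P}^N m_j - m_j\|_2 \leq \tilde{C}\|m_j\|_2 N^{-1/2}$ for each $j$, and therefore, by the triangle inequality and Cauchy--Schwarz,
\begin{equation*}
    \|\boldsymbol{a}^{\intercal}(\mathcal{P}^N \boldsymbol{m} - \boldsymbol{m})\|_2 \leq \sum_{j=1}^M |a_j|\,\|\mathcal{P}^N m_j - m_j\|_2 \leq C_{\boldsymbol{m}} N^{-1/2} \|\boldsymbol{a}\|,
\end{equation*}
where $C_{\boldsymbol{m}}$ depends only on $\tilde{C}$, $M$, and $\max_j \|m_j\|_2$.

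Combining the two estimates by the reverse triangle inequality gives
\begin{equation*}
    \|\boldsymbol{a}^{\intercal}\mathcal{P}^N\boldsymbol{m}(x)\|_2 \geq \|\boldsymbol{a}^{\intercal}\boldsymbol{m}(x)\|_2 - \|\boldsymbol{a}^{\intercal}(\mathcal{P}^N\boldsymbol{m}-\boldsymbol{m})\|_2 \geq \bigl(\sqrt{\lambda_{\min}(G)} - C_{\boldsymbol{m}} N^{-1/2}\bigr)\|\boldsymbol{a}\|.
\end{equation*}
Choosing $N_0$ large enough that $C_{\boldsymbol{m}} N_0^{-1/2} \leq \tfrac{1}{2}\sqrt{\lambda_{\min}(G)}$ yields $\|\boldsymbol{a}^{\intercal}\mathcal{P}^N\boldsymbol{m}(x)\|_2 \geq \tfrac{1}{2}\sqrt{\lambda_{\min}(G)}\,\|\boldsymbol{a}\|$ for all $N \geq N_0$, which is exactly the claim.

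The argument is essentially routine: the only conceptual point is the observation that the Gram matrix of $\{m_j\}$ is positive definite under \ref{Hcon1}, which converts the qualitative hypothesis of linear independence into a quantitative norm lower bound that can absorb the $O(N^{-1/2})$ perturbation coming from \cref{coro:poly}. No genuine obstacle is anticipated; the only care needed is to note that the implicit constant in $\lesssim$ depends on $\boldsymbol{m}$, which is permitted by the paper's convention.
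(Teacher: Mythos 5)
Your proposal is correct and follows essentially the same route as the paper: a lower bound $\|\boldsymbol{a}^{\intercal}\boldsymbol{m}\|_2 \gtrsim \|\boldsymbol{a}\|$ from linear independence (the paper asserts the constant $\sigma_2$ directly, you identify it as $\sqrt{\lambda_{\min}(G)}$ of the Gram matrix), an $O(N^{-1/2})\|\boldsymbol{a}\|$ perturbation bound from \cref{coro:poly} (the paper uses \cref{lemma:poly} with Cauchy--Schwarz), and the reverse triangle inequality with $N_0$ chosen to absorb the perturbation. No substantive difference.
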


\begin{proof}
For any vector $\boldsymbol{a} = (a_1,...,a_M)^{\intercal}$, we apply \cref{lemma:poly} and triangle inequality to get
\begin{equation*}
\begin{aligned}
\|\boldsymbol{a}^T \mathcal{P}^N \boldsymbol{m}(x)\|_2 & \geq \left\|\sum_{j=1}^{M} a_j m_j(x)\right\|_{2} - \left\|\sum_{j=1}^{M} a_j m_j(x) - \sum_{j=1}^{M} a_j \mathcal{P}^N m_j(x)\right\|_{2} \\
& \geq
\left\|\sum_{j=1}^{M} a_j m_j(x)\right\|_{2} -  \|\boldsymbol{a}\|_2 \sqrt{\sum_{j=1}^M  \left\|m_j(x) - \mathcal{P}^N m_j(x)\right\|_{2}^2} \\
& \geq \left\|\sum_{j=1}^{M} a_j m_j(x)\right\|_{2} - \sigma_1 \|\boldsymbol{a}\|_2 N^{-1/2},
\end{aligned}
\end{equation*}
where the constant $\sigma_1$ depends on the dimension $d$ and the polynomials $\boldsymbol{m}(x)$. On the other hand, the linear independence of $m_j(x)$ (from \ref{Hcon1}) implies that there exists a constant $\sigma_2$ such that $\left\|\sum_{j=1}^{M} a_j m_j(x)\right\|_{2} \geq \sigma_2 \| \boldsymbol{a} \|_2$, where the constant $\sigma_2$ is positive and depends only on the polynomials $\boldsymbol{m}(x)$. Therefore,
\begin{equation}\label{eq:polycoeffbound}
    \|\boldsymbol{a}^T \mathcal{P}^N \boldsymbol{m}(x)\|_2 \geq \left(\sigma_2 - \sigma_1 N^{-1/2} \right) \|\boldsymbol{a}\|_2.
\end{equation}
We can pick a sufficiently large $N_0$ such that the constant in front of $\|\boldsymbol{a}\|_2$ is larger than $\sigma_2/2$ when $N \geq N_0$, which proves the conclusion of the lemma.
\end{proof}

\cref{lemma:polycoeffbound} shows that $\boldsymbol{a}^T \mathcal{P}^N \boldsymbol{m}(x) = 0$ implies  $\boldsymbol{a}=\boldsymbol{0}$, which is summarized in the following corollary. 
\begin{corollary}\label{col:Hcon1toHcon2}
For a vector of polynomials $\boldsymbol{m}(x)$, under the condition \ref{Hcon1}, there exists $N_0>0$ such that for $N \geq N_0$, $\{ \mathcal{P}^N m_j(x) \}$ are linearly independent.
\end{corollary}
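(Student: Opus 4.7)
The plan is to derive the corollary as an immediate consequence of \cref{lemma:polycoeffbound}. The lemma establishes the one-sided inequality $\|\boldsymbol{a}\|_2 \lesssim \|\boldsymbol{a}^T \mathcal{P}^N \boldsymbol{m}(x)\|_2$ for all $\boldsymbol{a}\in\mathbb{R}^M$, provided $N\geq N_0$. Linear independence of the functions $\{\mathcal{P}^N m_j(x)\}_{j=1}^M$ is by definition the statement that the only real vector $\boldsymbol{a}$ with $\sum_{j=1}^M a_j \mathcal{P}^N m_j(x) = 0$ is $\boldsymbol{a}=\boldsymbol{0}$.

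Concretely, I would take $N_0$ to be exactly the threshold produced by \cref{lemma:polycoeffbound}, and for any $N\geq N_0$ argue by contradiction (or directly): suppose $\boldsymbol{a}\in\mathbb{R}^M$ satisfies $\boldsymbol{a}^T \mathcal{P}^N \boldsymbol{m}(x)\equiv 0$ as an element of $\mathbb{S}^N$. Then $\|\boldsymbol{a}^T \mathcal{P}^N \boldsymbol{m}(x)\|_2 = 0$, so the inequality from the lemma forces $\|\boldsymbol{a}\|_2=0$, i.e.\ $\boldsymbol{a}=\boldsymbol{0}$. This is essentially a one-line deduction, and there is no real obstacle — the heavy lifting (triangle inequality, the spectral approximation bound from \cref{lemma:poly}, and the norm-equivalence argument using finite-dimensionality of the polynomial space) has already been done inside the proof of \cref{lemma:polycoeffbound}.

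Since the argument is so short, the only thing worth being careful about is to state the quantification correctly: the constant implicit in $\lesssim$ and the threshold $N_0$ both come from $\boldsymbol{m}$ (and $d$) alone, so $N_0$ is uniform in $\boldsymbol{a}$ and the conclusion ``for $N\geq N_0$, $\{\mathcal{P}^N m_j\}$ are linearly independent'' is the correct reading (as opposed to $N_0$ depending on a particular relation $\boldsymbol{a}$). I would phrase the proof in one or two sentences, explicitly citing \cref{lemma:polycoeffbound} and reusing its $N_0$.
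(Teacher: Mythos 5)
Your proposal is correct and matches the paper exactly: the paper derives \cref{col:Hcon1toHcon2} precisely by noting that \cref{lemma:polycoeffbound} (with its $N_0$) forces $\boldsymbol{a}=\boldsymbol{0}$ whenever $\boldsymbol{a}^{\intercal}\mathcal{P}^N\boldsymbol{m}(x)=0$. Your remark on the uniformity of $N_0$ in $\boldsymbol{a}$ is the right point of care, and nothing more is needed.
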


\subsection{Proofs of \cref{eq:sumtoint} and \cref{eq:polynormtov}}\label{sec:sumtoint}
We recall that the entries of $B_{\mathcal{A}}$ are $\phi_j(x_k)$ with $k \in \mathcal{A}$ and $j$ being the index of the basis functions of $\mathbb{M}^{\perp}$. On the other hand, from the definition of $\mathbb{M}$ in \cref{eq:mspace}, $\mathbb{M}^{\perp}=\operatorname{span}\{ \mathcal{P}^N m_j(x) \mid j=1,\cdots,M\}$. \cref{col:Hcon1toHcon2} shows that $\mathcal{P}^N m_j(x)$ are linearly independent for sufficiently large $N$, meaning that that $\{\mathcal{P}^N m_j(x)\}$ is also a basis of $\mathbb{M}^{\perp}$. Thus, there exists a matrix $A \in \mathbb{R}^{M\times M}$ such that for any $\boldsymbol{v} = (v_1,\cdots,v_M)^{\intercal}\in \mathbb{R}^M$, 
\begin{equation}\label{eq:vtam}
\sum_{j=1}^M v_j \phi_j(x) = \boldsymbol{v}^{\intercal}A \,\mathcal{P}^N \boldsymbol{m}(x) = \mathcal{P}^N(\boldsymbol{v}^{\intercal} A\boldsymbol{m})(x).
\end{equation}
\cref{lemma:polycoeffbound} and the orthogonality of $\{\phi_j\}$ show that
\begin{equation} \label{eq:vnorm}
\|\boldsymbol{v}\|_2 = \|\boldsymbol{v}^{\intercal} A \,\mathcal{P}^N \boldsymbol{m}(x)\|_2 \gtrsim \|A^{\intercal} \boldsymbol{v}\|_2,
\end{equation}
which indicates that $\|A^{\intercal}\|_2$ is bounded uniformly in $N$.
Below we will choose
\begin{equation} \label{eq:defP}
P(x) = \boldsymbol{v}^{\intercal} A \boldsymbol{m}(x)
\end{equation}
and prove the inequality \cref{eq:sumtoint}. Since
\begin{displaymath}
    \boldsymbol{v}^{\intercal} B_{\mathcal{A}}^{\intercal} B_{\mathcal{A}} \boldsymbol{v} = \sum_{k \in \mathcal{A}} \left(\sum_{j=1}^M v_j \phi_j(x_k) \right)^2 = \sum_{k \in \mathcal{A}} \left( \mathcal{P}^N P(x_k) \right)^2,
\end{displaymath}
we can rewrite the left-hand side of \cref{eq:sumtoint} as
\begin{equation}
    \left| \sum_{k \in \mathcal{A}} \int_{\square_k} \left(\mathcal{P}^N P(x_k) \right)^2 \mathrm{d}x -  \sum_{k \in \mathcal{A}} \int_{\square_k} (P(x))^2 \mathrm{d}x \right|.
\end{equation}
Based on the above reformulation, \cref{eq:sumtoint} can be proved by triangle inequality if
\begin{gather}
    \label{eq:interpP} \left| \sum_{k \in \mathcal{A}} \int_{\square_k} \left(\mathcal{P}^N P(x_k) \right)^2 \mathrm{d}x -  \sum_{k \in \mathcal{A}} \int_{\square_k} (\mathcal{P}^N P(x))^2 \mathrm{d}x \right| \lesssim \frac{\boldsymbol{v}^{\intercal}\boldsymbol{v}}{\sqrt{N}}, \\
    \label{eq:projP} \left| \sum_{k \in \mathcal{A}} \int_{\square_k} \left(\mathcal{P}^N P(x) \right)^2 \mathrm{d}x -  \sum_{k \in \mathcal{A}} \int_{\square_k} (P(x))^2 \mathrm{d}x \right| \lesssim \frac{\boldsymbol{v}^{\intercal}\boldsymbol{v}}{\sqrt{N}}.
\end{gather}
These two inequalities will be shown in this subsection.

\cref{eq:interpP} can be regarded as an interpolation on a portion of collocation points for function $(\mathcal{P}^N P(x) )^2 \in \mathbb{S}^{2N}$. To estimate it, the following lemma is introduced.

\begin{lemma}[Interpolation of trigonometric polynomials]\label{lemma:interpolation}
    For any $\Psi \in \mathbb{S}^{2N}$, it holds that
\begin{equation*}
    \sum_{k \in \mathcal{N}^d} \left| \int_{\square_k} \left( \Psi(x_k) - \Psi(x) \right) \mathrm{d}x \right| \lesssim N^{-1} |\Psi|_{W^{1,1}([-\pi,\pi]^d)}.
\end{equation*}
\end{lemma}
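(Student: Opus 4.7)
The plan is to prove, by induction on the dimension $d$, the slightly stronger statement
\[
\sum_{k \in \mathcal{N}^d}\int_{\square_k}|\Psi(x_k) - \Psi(z)|\,dz \lesssim h\,|\Psi|_{W^{1,1}([-\pi,\pi]^d)}
\]
for $\Psi \in \mathbb{S}^{2N}([-\pi,\pi]^d)$, which implies the lemma via $|\int_{\square_k}(\Psi(x_k) - \Psi(z))dz| \le \int_{\square_k}|\Psi(x_k) - \Psi(z)|dz$. The base case $d = 1$ is immediate: by the 1D Peano kernel estimate $\int_{I_k}|\Psi(x_k) - \Psi(z)|dz \le \frac{h}{2}\int_{I_k}|\Psi'(s)|ds$, and summing gives the claim with $h \lesssim N^{-1}$.

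For the inductive step, I would decompose $\Psi(x) - \Psi(x_k) = \sum_{\emptyset \neq S\subseteq\{1,\ldots,d\}}T_S(x)$, where
\[
T_S(x) = \int_{Q_S(x)}\partial_S\Psi(s_S, x_{k,S^c})\prod_{i\in S}ds_i, \qquad \partial_S = \prod_{i\in S}\partial_i,
\]
with $Q_S(x) = \prod_{i\in S}[\min(x_{k,i},x_i),\max(x_{k,i},x_i)]$ and the coordinates outside $S$ frozen at the grid value. This identity is verified by induction on $d$ via coordinatewise telescoping (e.g., in $d=2$, $T_{\{1\}}+T_{\{2\}}+T_{\{1,2\}}$ collapses to $\Psi(x)-\Psi(x_k)$). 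Applying the 1D Peano kernel bound $\int_{x_{k,i}-h/2}^{x_{k,i}+h/2}|\int_{x_{k,i}}^{x_i}f(s_i)ds_i|dx_i \le \frac{h}{2}\int|f(s_i)|ds_i$ in each direction $i\in S$, and integrating trivially over the coordinates $i \in S^c$ (on which $T_S$ does not depend), I obtain, after summing over $k_S$ which tiles $[-\pi,\pi]^s$,
\[
\sum_{k}\int_{\square_k}|T_S|\,dx \le \Big(\frac{h}{2}\Big)^s h^{d-s}\sum_{k_{S^c}}G_S(x_{k_{S^c}}),
\]
where $s = |S|$ and $G_S(z) = \int_{[-\pi,\pi]^s}|\partial_S\Psi(s_S, z)|\,ds_S$.

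The hard part will be showing $h^{d-s}\sum_{k_{S^c}}G_S(x_{k_{S^c}})\lesssim \|\partial_S\Psi\|_{L^1}$ with constant independent of $N$, which is where the induction is essential (a pointwise Lipschitz bound for the non-polynomial $G_S$ is too lossy by Nikolskii). I would write $h^{d-s}\sum G_S(x_{k_{S^c}}) - \int G_S\,dz = \sum_{k_{S^c}}\int_{\square_{k_{S^c}}}(G_S(x_{k_{S^c}}) - G_S(z))dz$ and bound the error via $|G_S(x_{k_{S^c}}) - G_S(z)| \le \int|\partial_S\Psi(s_S, x_{k_{S^c}}) - \partial_S\Psi(s_S, z)|ds_S$. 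Applying the inductive hypothesis in dimension $d - s < d$ to $\varphi(z) := \partial_S\Psi(s_S, z) \in \mathbb{S}^{2N}([-\pi,\pi]^{d-s})$ yields $\sum_{k_{S^c}}\int_{\square_{k_{S^c}}}|\varphi(x_{k_{S^c}}) - \varphi(z)|dz \lesssim h\,|\varphi|_{W^{1,1}_z}$. Integrating over $s_S$ and invoking Bernstein's inequality $\|\partial_j P\|_{L^1}\lesssim N\|P\|_{L^1}$ for $P\in \mathbb{S}^{2N}$ gives a bound $hN\|\partial_S\Psi\|_{L^1}\lesssim \|\partial_S\Psi\|_{L^1}$ (using $hN \lesssim 1$), closing the Riemann-sum estimate.

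Combining, each subset contributes $\sum_k\int_{\square_k}|T_S|dx \lesssim (h/2)^s\|\partial_S\Psi\|_{L^1}$. Iterated Bernstein gives $\|\partial_S\Psi\|_{L^1}\lesssim N^{s-1}|\Psi|_{W^{1,1}}$, so each term is bounded by $h\cdot(hN/2)^{s-1}|\Psi|_{W^{1,1}}\lesssim h|\Psi|_{W^{1,1}}$. Summing over the $2^d - 1$ nonempty subsets $S$ (a dimension-dependent constant absorbed in $\lesssim$) preserves the bound and completes the induction, yielding the desired $\lesssim N^{-1}|\Psi|_{W^{1,1}([-\pi,\pi]^d)}$.
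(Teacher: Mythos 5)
Your proposal is correct, but it takes a genuinely different route from the paper. The paper keeps the absolute value outside the cell integral and exploits the midpoint-rule cancellation: it quotes a local quadrature estimate (from Raviart's particle-method notes) giving a local error $\lesssim \sum_{j\geq 2} N^{-j}|\Psi|_{W^{j,1}(\square_k)}$, sums over cells, and then uses the inverse (Bernstein) inequality $|\Psi|_{W^{j,1}}\lesssim N^{j-1}|\Psi|_{W^{1,1}}$ to come back down to $N^{-1}|\Psi|_{W^{1,1}}$. You instead prove the stronger bound with the absolute value inside the integral, where no cancellation is available, by induction on the dimension: the telescoping identity $\Psi(x)-\Psi(x_k)=\sum_{\emptyset\neq S}T_S(x)$ with mixed derivatives $\partial_S\Psi$ frozen at grid values outside $S$, one-dimensional Peano-kernel bounds in the $S$-directions, a Riemann-sum-versus-integral estimate for $G_S$ closed by the inductive hypothesis in dimension $d-|S|$ together with Bernstein in $L^1$, and finally iterated Bernstein $\|\partial_S\Psi\|_{L^1}\lesssim N^{|S|-1}|\Psi|_{W^{1,1}}$ so that each subset contributes $\lesssim h(hN)^{|S|-1}|\Psi|_{W^{1,1}}\lesssim N^{-1}|\Psi|_{W^{1,1}}$; all the steps (the $d=2$ check of the identity, the kernel measure argument, the reverse triangle inequality for $G_S$, and the fact that $\partial_S\Psi(s_S,\cdot)\in\mathbb{S}^{2N}$ in the remaining variables) hold. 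What the paper's argument buys is brevity, at the price of invoking an external local quadrature estimate and higher-order seminorms; what yours buys is a self-contained, elementary proof of a strictly stronger ($L^1$-interpolation-error) statement, at the price of the combinatorial decomposition and the dimension induction, with Bernstein's inequality playing the essential role in both.
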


\begin{proof}
    On the one hand, the local quadrature error for smooth function $\Psi$ satisfies (e.g., see \cite[Chapter 3]{raviart1985analysis})
\begin{equation}\label{eq:localintpol}
    \left| \int_{\square_k} \left( \Psi(x_k) - \Psi(x) \right) \mathrm{d}x \right| \lesssim \sum_{j=2}^{\max(3,d+1)}  
N ^{-j} | \Psi |_{W^{j,1}(\square_k)}.
\end{equation}
On the other hand, by the inverse inequality or Bernstein's inequality (e.g., see \cite[Chapter 1.4]{muscalu2013Bernstein}), it holds that for any $1<j \leq d+1$,
\begin{equation*}
    | \Psi |_{W^{j,1}([-\pi,\pi]^d)} \lesssim N^{j-1} | \Psi |_{W^{1,1}([-\pi,\pi]^d)}.
\end{equation*}
As a result, we take a sum of $k \in \mathcal{N}^d$ in \cref{eq:localintpol} and apply the above inverse inequality to obtain
\begin{displaymath}
\begin{aligned}
   & \sum_{k \in \mathcal{N}^d} \left| \int_{\square_k} \left( \Psi(x_k) - \Psi(x) \right) \mathrm{d}x \right| \\
   \lesssim & \sum_{j=2}^{\max(3,d+1)}  
N ^{-j} | \Psi |_{W^{j,1}([-\pi,\pi]^d)} \lesssim N^{-1} | \Psi |_{W^{1,1}([-\pi,\pi]^d)}.
\end{aligned}
\end{displaymath}
\end{proof}

We are now ready to prove the inequalities \cref{eq:interpP,eq:projP}. During the proof, the inequality \cref{eq:polynormtov} will be used as an intermediate result. Below we will present three proofs for these three inequalities.
\begin{proof}[Proof of \cref{eq:interpP}]
\cref{lemma:interpolation} shows
\begin{equation}\label{eq:localhighderi}
\begin{aligned}
    & \left| \sum_{k \in \mathcal{A}} \int_{\square_k} \left(\mathcal{P}^N P(x_k) \right)^2 \mathrm{d}x -  \sum_{k \in \mathcal{A}} \int_{\square_k} (\mathcal{P}^N P(x))^2 \mathrm{d}x \right| \\
    \leq {} & \sum_{k \in \mathcal{N}^d} \left| \int_{\square_k}  \left( (\mathcal{P}^N P(x_k))^2 - (\mathcal{P}^N P(x))^2 \right) \mathrm{d}x \right| \\
    \lesssim {} & N^{-1} | (\mathcal{P}^N P(x))^2 |_{W^{1,1}([-\pi,\pi]^d)},
\end{aligned}
\end{equation}
By the chain rule,
\begin{equation}\label{eq:pnpw11norm}
\begin{aligned}
    | (\mathcal{P}^N P(x))^2 |_{W^{1,1}([-\pi,\pi]^d)} & = 2 \int_{[-\pi,\pi]^d} |\nabla \mathcal{P}^N P(x)|_1 |\mathcal{P}^N P(x)|\mathrm{d}x \\
    & \leq 2 \sqrt{d} \| \mathcal{P}^N P(x)\|_2 | \mathcal{P}^N P(x) |_{H^1} = 2 \sqrt{d} \| \boldsymbol{v} \|_2 | \mathcal{P}^N P(x) |_{H^1},
\end{aligned}
\end{equation}
where the last equality utilizes the equality in \cref{eq:vnorm}. Then according to \cref{eq:vtam,eq:vnorm,eq:defP} and \cref{lemma:poly},
\begin{equation}\label{eq:pnph1norm}
    | \mathcal{P}^N P(x) |_{H^1} \leq \|\boldsymbol{A^{\intercal} v}\|_2\sqrt{\sum_{j=1}^M | \mathcal{P}^N m_j(x) |_{H^1}^2}\lesssim
    \|\boldsymbol{v}\|_2 \sqrt{N}.
\end{equation}
Plugging \cref{eq:pnpw11norm,eq:pnph1norm} into \cref{eq:localhighderi} proves \cref{eq:interpP}.
\end{proof}
\begin{proof}[Proof of \cref{eq:polynormtov}]
By \cref{coro:poly} and \cref{eq:vtam,eq:vnorm,eq:defP}, we get the following estimate of the projection error for sufficiently large $N$: 
\begin{equation}\label{eq:colbound}
\|  \mathcal{P}^N P - P \|_2  \lesssim N^{-1/2} \|P\|_2 \leq N^{-1/2} \|A^{\intercal} \boldsymbol{v}\|_2  \|\boldsymbol{m}^{\intercal} \boldsymbol{m}\|_{1}^{1/2} \lesssim N^{-1/2} \|\boldsymbol{v}\|_2.
\end{equation}
Assume that the constant in this equality is $C_3$, i.e., $\|  \mathcal{P}^N P - P \|_2 \leq C_3 N^{-1/2} \|\boldsymbol{v}\|_2$. Then by triangle inequality,
\begin{equation}\label{eq:Pboundpn}
\| P \|_2 \leq \| \mathcal{P}^N P\|_2 + \|  \mathcal{P}^N P - P \|_2
\leq \left( 1 + \frac{C_3}{\sqrt{N}} \right) \| \boldsymbol{v} \|_2.
\end{equation}
Taking the square on both sides of \cref{eq:Pboundpn} yields \cref{eq:polynormtov}.
\end{proof}

\begin{proof}[Proof of \cref{eq:projP}]
The left-hand side of \cref{eq:projP} satisfies
\begin{equation}\label{eq:projerr1}
    \begin{aligned}
     & \left| \sum_{k \in \mathcal{A}} \int_{\square_k} \left( (\mathcal{P}^N P(x))^2 - (P(x))^2 \right) \mathrm{d}x \right| \\
    \leq & \| \left(\mathcal{P}^N P + P \right) \left( \mathcal{P}^N P - P \right) \|_1 \\ 
     \leq & \| \mathcal{P}^N P + P \|_2 \|  \mathcal{P}^N P - P \|_2 \leq 2 \| P \|_2 \|  \mathcal{P}^N P - P \|_2,
    \end{aligned}
\end{equation}
where $\| \mathcal{P}^N P \|_2 \leq \| P \|_2$ is used in the last inequality. We can now apply \cref{eq:colbound} and \cref{eq:Pboundpn} to obtain
\begin{displaymath}
     \left| \sum_{k \in \mathcal{A}} \int_{\square_k} \left( (\mathcal{P}^N P(x))^2 - (P(x))^2 \right) \mathrm{d}x \right| \lesssim N^{-1/2} \left( 1 + \frac{C_3}{\sqrt{N}} \right)\|\boldsymbol{v}\|_2^2 \lesssim N^{-1/2} \|\boldsymbol{v}\|_2^2.
\end{displaymath}
\end{proof}

\subsection{Proof of \cref{eq:polypartglobal}}\label{sec:polypartglobal}
The proof of \cref{eq:polypartglobal} requires a theorem introduced in \cite{brudnyi1973extremal}. Here we rewrite it as a lemma:
\begin{lemma}[Remark 3 in \cite{brudnyi1973extremal}]\label{lemma:polyl2}
Let $P(x)$ be an arbitrary polynomial of degree $K$ and $V$ be a $d$-dimensional convex set of positive measure. Then for any subset $\Omega \subset V$ of positive measure,
\begin{equation}
    \left( \frac{1}{|V|} \int_V |P(x)|^2 \mathrm{d}x \right)^{1/2} \leq C(d,K) \left(\frac{|\Omega|}{|V|} \right)^{-K} \left( \frac{1}{|\Omega|} \int_{\Omega} |P(x)|^2 \mathrm{d}x \right)^{1/2},
\end{equation}
where constant $C(d,K) \geq 1$ depends on $d$ and $K$.
\end{lemma}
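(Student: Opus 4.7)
The plan is to reduce this $L^2$ reverse inequality to the classical pointwise Remez--Brudnyi--Ganzburg inequality on convex bodies, combined with Chebyshev's inequality to replace the $L^2$-average hypothesis on $\Omega$ by a sup-norm hypothesis on a slightly smaller but still large measurable subset $\Omega' \subset \Omega$. The whole argument is essentially an $L^\infty \to L^2$ conversion.

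\textbf{Step 1 (pointwise control on a large subset of $\Omega$).} Set $M := \bigl( \frac{1}{|\Omega|} \int_\Omega |P|^2 \,\mathrm{d}x \bigr)^{1/2}$. By Chebyshev's inequality applied to $|P|^2$ on $\Omega$, the set
\[
\Omega' := \{ x \in \Omega : |P(x)| \leq 2M \}
\]
has $|\Omega'| \geq \tfrac{3}{4}|\Omega|$, and by construction $\sup_{\Omega'} |P| \leq 2M$. This is the only place where the integral hypothesis is used; from here on, one works with a pointwise bound.

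\textbf{Step 2 (invoke the sup-norm Remez--Brudnyi--Ganzburg inequality).} The classical result of \cite{brudnyi1973extremal} states that for any polynomial $P$ of degree $K$, any convex body $V \subset \mathbb{R}^d$, and any measurable $E \subset V$ of positive measure,
\[
\sup_V |P| \leq T_K\!\left( \frac{1 + (1 - |E|/|V|)^{1/d}}{1 - (1 - |E|/|V|)^{1/d}} \right) \sup_E |P|,
\]
where $T_K$ is the Chebyshev polynomial of the first kind. Writing $s = |E|/|V|$, the expansion $1-(1-s)^{1/d} = s/d + O(s^2)$ bounds the argument of $T_K$ by $C(d)/s$, and the standard growth $T_K(x) \leq (2x)^K$ for $x \geq 1$ yields a multiplier of size at most $C'(d,K)\,s^{-K}$. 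Applying this with $E = \Omega'$ and using $|\Omega'|/|V| \geq (3/4)\,|\Omega|/|V|$ gives
\[
\sup_V |P| \leq C(d,K)\,(|\Omega|/|V|)^{-K}\cdot 2M.
\]
Combining with the trivial bound $\bigl(\frac{1}{|V|}\int_V |P|^2\bigr)^{1/2} \leq \sup_V |P|$ and absorbing numerical constants into $C(d,K)$ yields the claim. The boundary case $|\Omega| = |V|$ gives $(|\Omega|/|V|)^{-K} = 1$, consistent with $C(d,K) \geq 1$.

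\textbf{Main obstacle.} The deep ingredient is the sup-norm Brudnyi--Ganzburg inequality itself, which is the nontrivial multivariate Remez-type result we would cite rather than reprove. A self-contained derivation passes through the one-dimensional Remez inequality on an interval, applied to the restriction of $P$ to a carefully chosen chord through $V$, and relies on convexity of $V$ together with a Fubini-type estimate on the slice measures of $\Omega$ in order to reduce the $d$-dimensional statement to the classical univariate one. By contrast, the $L^\infty \to L^2$ conversion via Chebyshev's inequality and the $s^{-K}$ asymptotic of $T_K$ are elementary and carry no hidden difficulty.
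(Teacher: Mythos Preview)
The paper does not actually prove this lemma: it is quoted verbatim as Remark~3 from \cite{brudnyi1973extremal} and used as a black box in the proof of \eqref{eq:polypartglobal}. So there is no ``paper's own proof'' to compare against.

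Your derivation is correct and is precisely the standard route from the sup-norm Brudnyi--Ganzburg inequality (the main theorem of \cite{brudnyi1973extremal}) to its $L^2$ variant. The Chebyshev step is clean: from $M^2 = \frac{1}{|\Omega|}\int_\Omega |P|^2$, Markov's inequality gives $|\{|P|^2 > 4M^2\}| \le |\Omega|/4$, hence $|\Omega'| \ge \tfrac34|\Omega|$ with $\sup_{\Omega'}|P|\le 2M$. The asymptotic $T_K(x)\le (2x)^K$ together with the elementary bound on the Chebyshev argument for $s=|\Omega'|/|V|$ yields the $s^{-K}$ factor, and monotonicity in $s$ lets you replace $|\Omega'|$ by $\tfrac34|\Omega|$ at the cost of a factor $(4/3)^K$. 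One small point worth making explicit: the bound $T_K\bigl(\frac{1+(1-s)^{1/d}}{1-(1-s)^{1/d}}\bigr)\le C(d,K)\,s^{-K}$ must hold uniformly on $(0,1]$, not just asymptotically as $s\to 0$; this follows since the ratio is continuous on $(0,1]$, tends to a finite limit as $s\to 0^+$, and equals $1$ at $s=1$, so it attains a finite maximum. With that remark your argument is complete, and it is presumably how Remark~3 in \cite{brudnyi1973extremal} is obtained in the first place.
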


Based on the notations in \cref{lemma:polyl2}, we plug $V = [-\pi,\pi]^d$, $\bar{\Omega} = \cup_{k \in \mathcal{A}} \square_k$, $\Omega = V \backslash \bar{\Omega}$ and $P(x)$ from \cref{eq:defP} with degree $K$ into \cref{lemma:polyl2},
\begin{equation*}
    \int_V |P(x)|^2 \mathrm{d}x \leq (C(d,K))^2 \left(\frac{|\Omega|}{|V|} \right)^{-2K-1}  \int_{\Omega} |P(x)|^2 \mathrm{d}x.
\end{equation*}
Since $\Omega = V \backslash \bar{\Omega}$, the above inequality can be written as
\begin{equation}\label{eq:polylocglobal}
    \int_{\bar{\Omega}} |P(x)|^2 \mathrm{d}x \leq \left( 1 - (C(d,K))^{-2} \left(1 - \frac{|\bar{\Omega}|}{|V|} \right)^{2K+1} \right) \int_{V} |P(x)|^2 \mathrm{d}x.
\end{equation}
It is easy to see that $\frac{|\bar{\Omega}|}{|V|} = \frac{|\mathcal{A}|}{(2N+1)^d}$. Therefore, \cref{eq:polylocglobal} shows that \cref{eq:polypartglobal} holds with function $\mathcal{F}(x)$
\begin{displaymath}
    \mathcal{F}(x) = 1 - (C(d,K))^{-2} \left(1 - x \right)^{2K+1}.
\end{displaymath}

\subsection{Proof of \cref{eq:activesetbound}}\label{sec:activesetbound}
We will estimate the size of active set to show $\frac{|\mathcal{A}|}{(2N+1)^d} \leq C_2<1$, which is based on the construction of a series of positive and moment-preserving functions. This series of functions will converge to a $L^2$ function. The construction and related proof are mainly from \cite{mieussens2000discrete,mieussens2001convergence}.

\begin{lemma}\label{lemma:twoopt}
    Consider two optimization problems
    \begin{align}
        \label{eq:optJN} & J_{N}(\boldsymbol{\alpha}_N) = \min_{\boldsymbol{\beta} \in \mathbb{R}^M} \{ J_N(\boldsymbol{\beta}) =  \frac{(2\pi)^d}{(2N+1)^d} \sum_{k \in \mathcal{N}^d} \exp(\boldsymbol{\beta} \cdot \mathcal{P}^N \boldsymbol{m}(x_k) )- \boldsymbol{\beta} \cdot \boldsymbol{\rho}(f) \},\\
        \label{eq:optJ} & J(\boldsymbol{\alpha}) = \min_{\boldsymbol{\beta} \in \mathbb{R}^M} \{ J(\boldsymbol{\beta}) = \langle \exp(\boldsymbol{\beta} \cdot \boldsymbol{m}(x))\rangle - \boldsymbol{\beta} \cdot \boldsymbol{\rho}(f)\}.
    \end{align}
    Under \ref{Hcon1}--\ref{Hcon4} and for sufficiently large $N$, both $J_N$ and $J$ have unique solutions $\boldsymbol{\alpha}_N$ and $\boldsymbol{\alpha}$, respectively. Moreover, $J_N$ is locally uniformly convergent to $J$ on $\mathbb{R}^M$, and $\boldsymbol{\alpha}_N$ converges to $\boldsymbol{\alpha}$.
\end{lemma}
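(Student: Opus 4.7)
The plan is to handle existence/uniqueness, local uniform convergence, and convergence of minimizers in turn, following the classical entropy-minimization template in the spirit of Mieussens. First I would establish that both $J$ and $J_N$ are strictly convex and coercive. Strict convexity of $J$ reduces to that of $\boldsymbol{\beta}\mapsto \exp(\boldsymbol{\beta}\cdot\boldsymbol{m}(x))$: its pointwise Hessian $\boldsymbol{m}(x)\boldsymbol{m}(x)^{\intercal}\exp(\boldsymbol{\beta}\cdot\boldsymbol{m}(x))$ is rank one, and integration together with the linear independence of $\{m_j\}$ from \ref{Hcon1} makes the Hessian of $J$ positive definite. For $J_N$, \cref{col:Hcon1toHcon2} plays the analogous role: for $N\geq N_0$ the functions $\{\mathcal{P}^N m_j\}$ are linearly independent, making the discrete Hessian positive definite.

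For coercivity of $J$, I would use \ref{Hcon3}: fix a strictly positive $g$ with $\boldsymbol{\rho}(g)=\boldsymbol{\rho}(f)$. For any unit $\boldsymbol{e}\in\mathbb{R}^M$, writing $\boldsymbol{e}\cdot\boldsymbol{\rho}(f)=\langle g(\boldsymbol{e}\cdot\boldsymbol{m})\rangle$ places $\boldsymbol{\rho}(f)$ in the interior of the moment cone, so on the set where $\boldsymbol{e}\cdot\boldsymbol{m}>0$ (of positive measure by \ref{Hcon1}) the exponential $\exp(t\boldsymbol{e}\cdot\boldsymbol{m})$ dominates the linear penalty $t\boldsymbol{e}\cdot\boldsymbol{\rho}(f)$ as $t\to\infty$. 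For $J_N$, the same argument is carried out with the positive $g\in\mathbb{S}^N$ supplied by \ref{Hcon4}, which is exactly the Riemann-sum version of the interior condition. Combined with strict convexity, this yields unique minimizers $\boldsymbol{\alpha}$ and $\boldsymbol{\alpha}_N$.

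For local uniform convergence of $J_N$ to $J$ on $\mathbb{R}^M$, I would split the difference into a quadrature error and a projection error,
\begin{equation*}
J_N(\boldsymbol{\beta})-J(\boldsymbol{\beta})=\Bigl[\tfrac{(2\pi)^d}{(2N+1)^d}\sum_{k\in\mathcal{N}^d}\exp(\boldsymbol{\beta}\cdot\mathcal{P}^N\boldsymbol{m}(x_k))-\langle\exp(\boldsymbol{\beta}\cdot\mathcal{P}^N\boldsymbol{m})\rangle\Bigr]+\langle\exp(\boldsymbol{\beta}\cdot\mathcal{P}^N\boldsymbol{m})-\exp(\boldsymbol{\beta}\cdot\boldsymbol{m})\rangle.
\end{equation*}
For any compact $K\subset\mathbb{R}^M$ and $\boldsymbol{\beta}\in K$, the projection piece is controlled by Lipschitz continuity of $\exp$ on bounded sets together with $\|\mathcal{P}^N\boldsymbol{m}-\boldsymbol{m}\|_2\lesssim N^{-1/2}$ from \cref{lemma:poly}, upgraded to pointwise control via Sobolev embedding on the periodic torus. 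The quadrature piece is the periodic midpoint rule applied to a smooth periodic function, for which the spectral accuracy of the periodic trapezoidal rule delivers an $o(1)$ error uniformly in $\boldsymbol{\beta}\in K$.

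Finally, testing $J_N(\boldsymbol{\alpha}_N)\leq J_N(\boldsymbol{0})=(2\pi)^d$ gives a uniform upper bound on the minimum value, and uniform coercivity of $J_N$ (inherited from that of $J$ via the local uniform convergence) confines $\{\boldsymbol{\alpha}_N\}$ to a compact set. Any convergent subsequence $\boldsymbol{\alpha}_{N_k}\to\boldsymbol{\alpha}^*$ satisfies $J(\boldsymbol{\alpha}^*)\leq J(\boldsymbol{\beta})$ for every $\boldsymbol{\beta}$ by passing to the limit in $J_{N_k}(\boldsymbol{\alpha}_{N_k})\leq J_{N_k}(\boldsymbol{\beta})$, so uniqueness forces $\boldsymbol{\alpha}^*=\boldsymbol{\alpha}$ and hence the full sequence converges. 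I expect the main obstacle to be making the coercivity of $J_N$ uniform in $N$: the linear term $-\boldsymbol{\beta}\cdot\boldsymbol{\rho}(f)$ is common to both problems, so the challenge is to verify that the discrete exponential term $\tfrac{(2\pi)^d}{(2N+1)^d}\sum_k\exp(\boldsymbol{\beta}\cdot\mathcal{P}^N\boldsymbol{m}(x_k))$ retains the same super-linear growth in $\|\boldsymbol{\beta}\|$ as the continuous $\langle\exp(\boldsymbol{\beta}\cdot\boldsymbol{m})\rangle$, which in turn needs uniform (not merely $L^2$) control on $\mathcal{P}^N\boldsymbol{m}-\boldsymbol{m}$ over $[-\pi,\pi]^d$.
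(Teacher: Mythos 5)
Your overall route---strict convexity plus coercivity for existence and uniqueness, then local uniform convergence and a compactness argument for the minimizers---is exactly the Mieussens-style template that the paper itself invokes: the paper gives no details and defers to Theorem 3.1 of \cite{mieussens2000discrete}. However, two of your concrete mechanisms do not hold up. First, in the coercivity of $J$ you claim that $\{x:\boldsymbol{e}\cdot\boldsymbol{m}(x)>0\}$ has positive measure ``by \ref{Hcon1}''; this is false in general (take $\boldsymbol{m}=(1,x,x^2)^{\intercal}$ and $\boldsymbol{e}=(0,0,-1)^{\intercal}$, so $\boldsymbol{e}\cdot\boldsymbol{m}=-x^2\leq 0$). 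The correct argument is a case split: if $\boldsymbol{e}\cdot\boldsymbol{m}\leq 0$ a.e., then \ref{Hcon1} gives $\boldsymbol{e}\cdot\boldsymbol{m}\not\equiv 0$ and the strict positivity of $g$ in \ref{Hcon3} gives $\boldsymbol{e}\cdot\boldsymbol{\rho}(f)=\langle g\,\boldsymbol{e}\cdot\boldsymbol{m}\rangle<0$, so the linear term alone diverges; the exponential-domination argument covers only the complementary case. The discrete coercivity should likewise be run directly with \ref{Hcon4} and the collocation values of $\mathcal{P}^N\boldsymbol{m}$, not inherited from the continuous problem.

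The more serious gap is in the local uniform convergence. You propose to upgrade $\|\mathcal{P}^N\boldsymbol{m}-\boldsymbol{m}\|_2\lesssim N^{-1/2}$ to pointwise control ``via Sobolev embedding.'' This cannot work: the components of $\boldsymbol{m}$ are polynomials, hence not periodic, so $\mathcal{P}^N\boldsymbol{m}$ suffers Gibbs oscillations near $\partial[-\pi,\pi]^d$ and $\|\mathcal{P}^N\boldsymbol{m}-\boldsymbol{m}\|_{\infty}$ does not tend to zero; moreover $|\mathcal{P}^N\boldsymbol{m}|_{H^1}\sim\sqrt{N}$ by \cref{lemma:poly}, so no embedding or interpolation inequality can yield uniform smallness. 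What the argument actually requires is a uniform-in-$N$ bound on $\|\mathcal{P}^N\boldsymbol{m}\|_{\infty}$, so that $\exp(\boldsymbol{\beta}\cdot\mathcal{P}^N\boldsymbol{m})$ stays bounded for $\boldsymbol{\beta}$ in a compact set; this needs a separate argument (e.g.\ the tensor-product structure of $\mathcal{P}^N x^{\alpha}$ combined with uniform boundedness of one-dimensional Fourier partial sums of BV functions---the naive bound by the absolute sum of Fourier coefficients only gives $(\log N)^d$, which is useless inside an exponential). With that sup-norm bound, the projection piece follows from a mean-value estimate and the $L^2$ bound, and the Riemann-sum piece follows from a first-order quadrature bound in the spirit of \cref{lemma:interpolation} together with $|\mathcal{P}^N\boldsymbol{m}|_{H^1}\lesssim\sqrt N$; your appeal to ``spectral accuracy'' overlooks that the integrand depends on $N$ with derivatives growing in $N$. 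The same missing sup-norm control invalidates the fix you propose for your self-identified ``main obstacle'' (uniform coercivity of $J_N$), since uniform control of $\mathcal{P}^N\boldsymbol{m}-\boldsymbol{m}$ on the cube is simply unavailable; that obstacle is better bypassed altogether: once $J_N\to J$ locally uniformly and $J$ has the unique minimizer $\boldsymbol{\alpha}$, convexity of $J_N$ and the strict inequality $J>J(\boldsymbol{\alpha})$ on a small sphere around $\boldsymbol{\alpha}$ already trap $\boldsymbol{\alpha}_N$ in the corresponding ball for large $N$, with no uniform coercivity needed.
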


The proof of this lemma is almost identical to the proof of Theorem 3.1 in \cite{mieussens2000discrete} (see also the proof of Theorem 1 in \cite{mieussens2001convergence}). We omit the details here.

\begin{lemma}[Proof of \cref{eq:activesetbound}]\label{lemma:activeset}
    For the active set $\mathcal{A} = \mathcal{A}(N)$ defined in \cref{eq:optactive}, under the conditions of \cref{lemma:twoopt}, there exist constants $0 \leq C_2<1$ and $N_0>0$ such that for all $N \geq N_0$, \cref{eq:activesetbound} is satisfied.
\end{lemma}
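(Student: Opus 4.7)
The plan is to bound $|\mathcal{A}|/(2N+1)^d$ strictly below $1$ by constructing an explicit feasible candidate for the optimization problem \cref{eq:optieq} whose $L^2$ norm is uniformly bounded, and then applying Cauchy--Schwarz to the mass-preservation constraint to force a positive ``live'' fraction of collocation points. Using \cref{lemma:twoopt}, let $\boldsymbol{\alpha}_N\to\boldsymbol{\alpha}$ be the minimizers of $J_N$ and $J$, and define $g_N(x):=\exp(\boldsymbol{\alpha}_N\cdot\mathcal{P}^N\boldsymbol{m}(x))$. Then $g_N(x_k)>0$ for every $k\in\mathcal{N}^d$, and the first-order optimality of $\boldsymbol{\alpha}_N$ in $J_N$, combined with the discrete Parseval identity \cref{eq:sninnerprod} (using that $\mathcal{I}^N g_N\in\mathbb{S}^N$ and $\mathcal{P}^N m_j\in\mathbb{S}^N$), will give $\boldsymbol{\rho}(\mathcal{I}^N g_N)=\boldsymbol{\rho}(f)$, so $\mathcal{I}^N g_N$ is feasible for both \cref{eq:optargmin} and \cref{eq:optieq}.

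The crux is a uniform-in-$N$ $L^2$ bound on $\mathcal{I}^N g_N$. Applying \cref{eq:sninnerprod} once more,
\begin{equation*}
  \|\mathcal{I}^N g_N\|_2^2 \;=\; \frac{(2\pi)^d}{(2N+1)^d}\sum_{k\in\mathcal{N}^d}\exp\!\bigl(2\boldsymbol{\alpha}_N\cdot\mathcal{P}^N\boldsymbol{m}(x_k)\bigr) \;=\; J_N(2\boldsymbol{\alpha}_N) + 2\boldsymbol{\alpha}_N\cdot\boldsymbol{\rho}(f).
\end{equation*}
Since $\boldsymbol{\alpha}_N\to\boldsymbol{\alpha}$ and $J_N\to J$ locally uniformly on $\mathbb{R}^M$ by \cref{lemma:twoopt}, a routine $\varepsilon/2$ argument yields $J_N(2\boldsymbol{\alpha}_N)\to J(2\boldsymbol{\alpha})$, and hence $\|\mathcal{I}^N g_N\|_2^2\to\langle\exp(2\boldsymbol{\alpha}\cdot\boldsymbol{m})\rangle$, a finite constant. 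Combining with the optimality of $\Pi^N f$ in \cref{eq:optargmin} and of $\Pi^N_+(\Pi^N f)$ in \cref{eq:optieq}, together with the triangle inequality, I then obtain $\|\Pi^N_+(\Pi^N f)\|_2 \leq M$ for some $N$-independent constant $M>0$.

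Finally, let $m_{j_0}\equiv c_0\neq 0$ denote the constant component of $\boldsymbol{m}$; since $f\geq 0$ is non-trivial, $|\rho_{j_0}(f)|=|c_0|\langle f\rangle>0$. The $j_0$-th moment identity, combined with $\Pi^N_+(\Pi^N f)(x_k)=0$ on $\mathcal{A}$ and \cref{eq:sninnerprod}, becomes
\begin{equation*}
  \rho_{j_0}(f)\;=\;\frac{(2\pi)^d c_0}{(2N+1)^d}\sum_{k\in\mathcal{R}}\Pi^N_+(\Pi^N f)(x_k),
\end{equation*}
so Cauchy--Schwarz plus one more use of \cref{eq:sninnerprod} gives
\begin{equation*}
  \frac{\rho_{j_0}(f)^2}{c_0^2}\cdot\frac{(2N+1)^d}{(2\pi)^d} \;\leq\; |\mathcal{R}|\,\|\Pi^N_+(\Pi^N f)\|_2^2 \;\leq\; |\mathcal{R}|\,M^2.
\end{equation*}
Rearranging yields $|\mathcal{R}|/(2N+1)^d \geq \rho_{j_0}(f)^2/(c_0^2(2\pi)^d M^2) =: 1-C_2 > 0$, whence $|\mathcal{A}|/(2N+1)^d \leq C_2 < 1$ for all $N$ sufficiently large.

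The hard part is the uniform $L^2$ bound on $\mathcal{I}^N g_N$: a priori, the Fourier partial sum $\mathcal{P}^N\boldsymbol{m}$ of a non-periodic polynomial can exhibit Gibbs-type blow-up at collocation points near $\pm\pi$, so individual values $g_N(x_k)$ are not pointwise controlled. The identity packaging $\|\mathcal{I}^N g_N\|_2^2$ as $J_N(2\boldsymbol{\alpha}_N)+2\boldsymbol{\alpha}_N\cdot\boldsymbol{\rho}(f)$ elegantly routes around this obstruction, since \cref{lemma:twoopt} already supplies locally uniform convergence of $J_N$ in a neighborhood of $2\boldsymbol{\alpha}$.
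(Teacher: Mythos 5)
Your proposal is correct and follows essentially the same route as the paper: both use the discrete entropy minimizer $\boldsymbol{\alpha}_N$ from \cref{lemma:twoopt} to build the feasible candidate $\mathcal{I}^N(\exp(\boldsymbol{\alpha}_N\cdot\mathcal{P}^N\boldsymbol{m}))$, bound its $L^2$ norm uniformly via the convergence $\boldsymbol{\alpha}_N\to\boldsymbol{\alpha}$ and $J_N\to J$, and then apply Cauchy--Schwarz to the mass constraint on the collocation points where the minimizer vanishes. The only difference is presentational: you run the argument directly to extract an explicit $C_2$, whereas the paper phrases it as a contradiction with $\varepsilon\to 0$.
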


\begin{proof}
We will prove this lemma by contradiction. If the lemma is not true, for any $\varepsilon>0$ and $N_0$, there exists $N>N_0$ such that
\begin{equation*}
    \frac{|\mathcal{A}|}{(2N+1)^d} > 1 - \varepsilon.
\end{equation*}
Recalling $\mathcal{R}$ is the complement set of $\mathcal{A}$ such that $\mathcal{R} = \mathcal{N}^d \backslash \mathcal{A}$, it holds that
\begin{equation*}
    \frac{|\mathcal{R}|}{(2N+1)^d} \leq \varepsilon.
\end{equation*}
By definition, for all $k \in \mathcal{A}$, we have $\Pi^N_+(f_N^c)(x_k)=0$. Therefore, according to the Cauchy–Schwarz inequality and \cref{eq:sninnerprod}, it holds that
\begin{equation*}
    \begin{split}
        \langle \Pi^N_+(f_N^c) \rangle & = \frac{(2 \pi)^d}{(2N+1)^d} \sum_{k \in \mathcal{N}^d} \Pi^N_+(f_N^c)(x_k)
        = \frac{(2 \pi)^d}{(2N+1)^d} \sum_{k \in \mathcal{R}} \Pi^N_+(f_N^c)(x_k) \\
        & \leq \sqrt{\frac{(2\pi)^d|\mathcal{R}|}{(2N+1)^d}} \left( \frac{(2 \pi)^d}{(2N+1)^d} \sum_{k \in \mathcal{R}} \left( \Pi^N_+(f_N^c)(x_k) \right)^2 \right)^{1/2} \\
        & = \sqrt{\frac{(2\pi)^d|\mathcal{R}|}{(2N+1)^d}} \| \Pi^N_+(f_N^c) \|_2 \leq \sqrt{(2 \pi)^d \varepsilon} \| \Pi^N_+(f_N^c) \|_2
    \end{split}
\end{equation*}
On the other hand, the left-hand side $\langle \Pi^N_+(f_N^c) \rangle = \langle f \rangle >0$. Therefore,
\begin{equation}\label{eq:pintoinf}
    \| \Pi^N_+(f_N^c) \|_2 \geq \frac{\langle f \rangle}{\sqrt{(2 \pi)^d \varepsilon}}.
\end{equation}
In particular, we take $\varepsilon = 1 / N_0$ and let $N_0$ go to infinity to construct a subsequence of $\| \Pi^N_+(f_N^c) \|_2$ that goes to infinity. Together with the uniform boundedness of $\| f_N^c - f \|_2$ \cite{rey2022sinum}, it holds that the subsequence of $\| \Pi^N_+(f_N^c) - f_N^c \|_2$ also goes to infinity, which means there exists a subsequence of the optimal objective functions in \cref{eq:ftom} that tends to infinity.

Now we start to show that there is a feasible solution of \cref{eq:ftom} for each sufficiently large $N$, and the $L^2$ norms of these feasible solutions are uniformly bounded. This means the optimal value of \cref{eq:ftom} is uniformly bounded.

According to \cref{lemma:twoopt}, the optimization problem \cref{eq:optJN} has a unique solution $\boldsymbol{\alpha}_N$ for any sufficiently large $N$. The first-order optimality condition of \cref{eq:optJN} shows
\begin{equation}\label{eq:dismoment}
    \frac{(2\pi)^d}{(2N+1)^d} \sum_{k \in \mathcal{N}^d} \exp(\boldsymbol{\alpha}_N \cdot \mathcal{P}^N \boldsymbol{m}(x_k) ) \mathcal{P}^N \boldsymbol{m}(x_k) = \boldsymbol{\rho}(f).
\end{equation}
Then, the function 
\begin{displaymath}
\mathcal{E}_N(x) = \mathcal{I}^N (\exp(\boldsymbol{\alpha}_N \cdot \mathcal{P}^N \boldsymbol{m}(x) )) \in \mathbb{S}^N
\end{displaymath}
satisfies $\mathcal{E}_N(x_k) > 0$ for all $k \in \mathcal{N}^d$ and $\boldsymbol{\rho}(\mathcal{E}_N) = \boldsymbol{\rho}(f)$. Thus, using triangle inequality and the optimality of $\Pi_N^+(f_N^c)$, we have
\begin{displaymath}
\|\mathcal{E}_N\|_2 \geq \|\mathcal{E}_N - f_N^c\|_2 - \|f_N^c\|_2 \geq \|\Pi_N^+(f_N^c) - f_N^c\|_2 - \|f_N^c\|_2,
\end{displaymath}
which implies that $\|\mathcal{E}_N\|_2$ also diverges to infinity in the subsequence.

However, by \cref{lemma:twoopt}, $\boldsymbol{\alpha}_N$ converges to $\boldsymbol{\alpha}$, which is the unique solution of \cref{eq:optJ}. Making use of \cref{eq:sninnerprod} and the (local) uniform convergence of $J_N$ to $J$ in \cref{lemma:twoopt}, this sequence satisfies
\begin{displaymath}
    \| \mathcal{E}_N(x) \|_2^2 = \frac{(2 \pi)^d}{(2N+1)^d} \sum_{k \in \mathcal{N}^d} \exp(2 \boldsymbol{\alpha}_N \cdot \mathcal{P}^N \boldsymbol{m}(x_k) ) \to \langle \exp(2 \boldsymbol{\alpha} \cdot \boldsymbol{m}(x)) \rangle,
\end{displaymath}
which is bounded. This contradicts our previous conclusion that the subsequence of $\|\mathcal{E}_N\|_2$ tends to infinity. By the method of contradiction, the constant $C_2 < 1$ must exist.
\end{proof}

\section{An efficient solver of the optimization problem}\label{sec:algorithm}
In this section, we aim at designing an efficient numerical scheme to solve \cref{eq:optieq}.

To brief the idea more clearly, we can further apply \cref{eq:sninnerprod} to \cref{eq:parseopt} and obtain an equivalent form of the minimization problem in \cref{eq:optieq} as
\begin{equation*}
    \min_{g_k} \sum_{k \in \mathcal{N}^d} \left( g_k - (\mathcal{P}^N f)(x_k) \right)^2 \ \ \text{  s.t. } g_k \geq 0 \text{ and } \frac{(2 \pi)^d}{(2N+1)^d} \sum_{k \in \mathcal{N}^d} \mathcal{P}^N \boldsymbol{m}(x_k) g_k = \boldsymbol{\rho}(f).
\end{equation*}
The above optimization problem can be represented as a matrix-vector form as
\begin{equation}\label{eq:optfinite}
    \min_{\boldsymbol{g}} \| \boldsymbol{g} - \boldsymbol{f}_N \|_2^2, \qquad \text{ s.t. } \boldsymbol{g} \in \mathbb{R}^{(2N+1)^d}_+ \text{ and } \mathcal{M} \boldsymbol{g} = \boldsymbol{\rho}(f),
\end{equation}
where $\boldsymbol{f}_N$ is the vector of $(\mathcal{P}^N f)(x_k)$ for $k \in \mathcal{N}^d$; the entries of the matrix $\mathcal{M} \in \mathbb{R}^{M \times (2N+1)^d}$ is $\frac{(2 \pi)^d}{(2N+1)^d} \mathcal{P}^N m_j(x_k)$, and $j$ denotes the row index and $k$ denotes the column index. By \ref{Hcon4}, we have $\boldsymbol{\rho}(f)\in \operatorname{Ran}(\mathcal{M})$, where ${\rm Ran}(\mathcal{M})$ is the range space of $\mathcal{M}$. According to \cref{sec:wellpose}, \eqref{eq:optfinite} admits a unique minimizer, denoted as $\boldsymbol{g}^*$. Furthermore, \cref{col:Hcon1toHcon2} and \cref{eq:sninnerprod} implies ${\rm Ran}(\mathcal{M}) = \mathbb{R}^M$ for sufficiently large $N$. 

The problem \cref{eq:optfinite} is a convex quadratic programming problem, which computes the projection of $\boldsymbol{f}_N $ onto a polyhedral set $\{\boldsymbol{g}\in \mathbb{R}^{(2N+1)^d}_+: \mathcal{M} \boldsymbol{g} = \boldsymbol{\rho}(f)\}$. By introducing a Lagrange multiplier $\boldsymbol{\lambda}\in {\rm Ran}(\mathcal{M})$, the Lagrange function associated with the problem \cref{eq:optfinite} is given as
\begin{equation}\label{eq:lagfun}
    L(\boldsymbol{g}; \boldsymbol{\lambda}) = \| \boldsymbol{g} - \boldsymbol{f}_N\|_2^2 +\delta_{+}(\boldsymbol{g}) - \boldsymbol{\lambda}^{\intercal} \left( \mathcal{M} \boldsymbol{g} - \boldsymbol{\rho}(f) \right), \ (\boldsymbol{g},\boldsymbol{\lambda})\in \mathbb{R}^{(2N+1)^d}\times {\rm Ran}(\mathcal{M}),
\end{equation}
where $\delta_{+}(\cdot)$ is the indicator function of the non-negative orthant $\mathbb{R}_{+}^{(2N+1)^d}$. Based on \cref{eq:lagfun}, the dual problem of \cref{eq:optfinite} takes the form as 
\begin{align}
\max_{\boldsymbol{\lambda} \in {\rm Ran}(\mathcal{M})} \left\{ \min_{\boldsymbol{g} \in \mathbb{R}^{(2N+1)^d}} L(\boldsymbol{g}; \boldsymbol{\lambda}) \right\}.\label{eq:optdual_org}
\end{align}
By rewriting the Lagrange function $L(\boldsymbol{g};\boldsymbol{\lambda})$ as
\begin{displaymath}
L(\boldsymbol{g}; \boldsymbol{\lambda})
 = \left\| \boldsymbol{g} - \left( \boldsymbol{f}_N + \frac{1}{2} \mathcal{M}^{\intercal} \boldsymbol{\lambda} \right) \right\|_2^2 +\delta_{+}(\boldsymbol{g})+ \boldsymbol{\lambda}^{\intercal} \boldsymbol{\rho}(f) - \left\| \boldsymbol{f}_N + \frac{1}{2} \mathcal{M}^{\intercal} \boldsymbol{\lambda} \right\|_2^2 + \left\| \boldsymbol{f}_N \right\|_2^2,
\end{displaymath}
we can find that $\boldsymbol{g} = \Pi_+ \left( \boldsymbol{f}_N + \frac{1}{2} \mathcal{M}^{\intercal} \boldsymbol{\lambda} \right)$ minimizes $L(\boldsymbol{g};\boldsymbol{\lambda})$ for any fixed $\boldsymbol{\lambda}$, where $\Pi_+(\cdot)$ is the projection operator onto $\mathbb{R}^{(2N+1)^d}_+$ defined as: for any $\boldsymbol{w}\in \mathbb{R}^{(2N+1)^d}$,
\begin{equation*}
    \Big(\Pi_+ (\boldsymbol{w})\Big)_k = \max(0, w_k), \qquad \forall k.
\end{equation*}
Then the dual problem \cref{eq:optdual_org} can be written equivalently as
\begin{equation}\label{eq:optdual}
    \max_{\boldsymbol{\lambda} \in {\rm Ran}(\mathcal{M})} \left\{
    \Phi(\boldsymbol{\lambda}):=- \left\| \Pi_{+} \left( \boldsymbol{f}_N + \frac{1}{2} \mathcal{M}^{\intercal} \boldsymbol{\lambda} \right) \right\|_2^2 + \boldsymbol{\lambda}^{\intercal} \boldsymbol{\rho}(f) + \left\| \boldsymbol{f}_N\right\|_2^2 
    \right\}.
\end{equation}
Note that the subspace constraint $\boldsymbol{\lambda} \in {\rm Ran}(\mathcal{M})$ is imposed to ensure the boundedness of the solution set of the dual problem \cref{eq:optdual}. 

We are going to focus on solving the dual problem \cref{eq:optdual}, which is an unconstrained maximization problem on ${\rm Ran}(\mathcal{M})$ with a concave objective function. As long as we obtain a maximizer $\boldsymbol{\lambda}^*$ of \cref{eq:optdual}, we can compute the unique optimal solution to \cref{eq:optfinite} as $\boldsymbol{g}^* = \Pi_{+} \left( \boldsymbol{f}_N + \frac{1}{2} \mathcal{M}^{\intercal} \boldsymbol{\lambda^*} \right)$.

According to \cite{moreau1965proximite}, we can see that $\Phi(\cdot)$ is continuously differentiable with respect to $\boldsymbol{\lambda}$ on ${\rm Ran}(\mathcal{M})$ with
\begin{align*}
\nabla \Phi(\boldsymbol{\lambda}) = - \mathcal{M} \Pi_{+} \left( \boldsymbol{f}_N + \frac{1}{2} \mathcal{M}^{\intercal} \boldsymbol{\lambda} \right) + \boldsymbol{\rho}(f),\quad \mbox{for any }\boldsymbol{\lambda}\in {\rm Ran}(\mathcal{M}).
\end{align*}
Then the dual problem \cref{eq:optdual} can be solved by the nonsmooth equation
\begin{align}
\nabla \Phi(\boldsymbol{\lambda}) = 0,\quad \boldsymbol{\lambda}\in {\rm Ran}(\mathcal{M}).\label{eq:dualopt}
\end{align}
Noting the fact that $\nabla \Phi(\cdot)$ is nondifferentiable, we cannot apply the commonly-used derivative-based methods directly. Fortunately, since $\Pi_{+}(\cdot)$ is strongly semismooth \cite{sun2008lowner}, we can develop a semismooth Newton method to solve the nonsmooth equation \cref{eq:dualopt} and expect a quadratic convergence rate. 

We present our proposed method for solving \cref{eq:optfinite} in \cref{alg:ssn}, together with its convergence results in \cref{thm: ssn}. The detailed proof can be found in \cref{sec: convergence_SSN}.

\begin{algorithm}[h]
	\caption{A semismooth Newton based algorithm for solving \cref{eq:optfinite}}
	\label{alg:ssn}
	\begin{algorithmic}[1]
		\STATE \textbf{Initialization}. Given $\tau_1,\tau_2\in (0,1)$ to ensure the non-negative definiteness of Hessian matrices and $\mu\in (0,1/2)$, $\delta\in (0,1)$ to set line search step sizes. Let the initial point $\boldsymbol{\lambda}^0$ be the zero vector in $\mathbb{R}^M$. Set $j=0$.
        \REPEAT
		\STATE {\bfseries Step 1 (Newton Direction)} Let the vector $\boldsymbol{u}^j\in \mathbb{R}^{(2N+1)^d}$ be defined as: for each $k$, $(\boldsymbol{u}^j)_k = 1$ if $(\boldsymbol{f}_N + \frac{1}{2} \mathcal{M}^{\intercal} \boldsymbol{\lambda}^j )_k>0$ and $(\boldsymbol{u}^j)_k =0$ otherwise. Set $\varepsilon_j:=\tau_1 \min \{\tau_2,\|\nabla \Phi(\boldsymbol{\lambda}^j)\|\}$ and solve $\boldsymbol{d}^j\in {\rm Ran}(\mathcal{M})$ from the linear system 
	  \begin{align*}
	  \left(- \frac{1}{2} \mathcal{M} \text{Diag}(\boldsymbol{u}^j) \mathcal{M}^{\intercal} - \varepsilon_j I_M \right) \boldsymbol{d} = -\nabla \Phi(\boldsymbol{\lambda}^j).
		\end{align*}
        \\[3pt]
		\STATE {\bfseries Step 2 (Line Search)} Let $n_j$ be the smallest non-negative integer $n$ for which
        \begin{align*}
        \Phi(\boldsymbol{\lambda}^j + \delta^n \boldsymbol{d}^j)\geq \Phi(\boldsymbol{\lambda}^j) + \mu \delta^n \langle \nabla \Phi(\boldsymbol{\lambda}^j), \boldsymbol{d}^j\rangle .
        \end{align*}
	   \\[3pt]
	 \STATE {\bfseries Step 3}. Set 
        $\boldsymbol{\lambda}^{j+1} = \boldsymbol{\lambda}^j +\delta^{n_j} \boldsymbol{d}^j$, and $j\leftarrow j+1$.
        \UNTIL{$\| \nabla \Phi(\boldsymbol{\lambda}^j) \|$ is smaller than the given tolerance.}
    \RETURN An approximate solution $\hat{\boldsymbol{g}} := \Pi_{+} \left( \boldsymbol{f}_N + \frac{1}{2} \mathcal{M}^{\intercal} \boldsymbol{\lambda}^j \right)$ to \cref{eq:optfinite}.
	\end{algorithmic}
\end{algorithm}

\begin{theorem}\label{thm: ssn}
Let $\{\boldsymbol{\lambda}^j\}$ be the infinite sequence generated by Algorithm \ref{alg:ssn}. Then $\{\boldsymbol{\lambda}^j\}\subseteq {\rm Ran}(\mathcal{M})$ is a bounded sequence and any accumulation point is an optimal solution to the problem \cref{eq:optdual}.

Furthermore, let ${\cal K}:=\{k \in \mathcal{N}^d : (\boldsymbol{g}^*)_k >0\}$ and assume
\begin{align}
{\rm span} (\{ \mathcal{M}_{:,k} \}_{k\in {\cal K}} )={\rm Ran}(\mathcal{M}),\label{eq:span_condition}
\end{align}
where $\mathcal{M}_{:,k}$ is the $k$th column of the matrix $\mathcal{M}$, then the sequence $\{\boldsymbol{\lambda}^j\}$ converges to an optimal solution $\boldsymbol{\lambda}^*$ to \cref{eq:optdual} and
\begin{align*}
\|\boldsymbol{\lambda}^{j+1}-\boldsymbol{\lambda}^*\|\leq \mathcal{O}(\|\boldsymbol{\lambda}^j-\boldsymbol{\lambda}^*\|^{2}).
\end{align*}
\end{theorem}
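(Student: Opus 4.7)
The theorem splits cleanly into two parts: a global statement (iterates stay in $\operatorname{Ran}(\mathcal{M})$, remain bounded, all accumulation points are optimal) and a local quadratic rate under the nondegeneracy condition \eqref{eq:span_condition}. My plan is to apply the standard semismooth Newton framework to the concave dual \eqref{eq:optdual}: first establish the algorithmic ingredients (well-defined Newton direction, ascent property, finite line search), combine a monotonicity argument with coercivity to pin down boundedness, pass to subsequential limits via continuity of $\nabla\Phi$, and finally upgrade to the quadratic rate by checking that every element of the Clarke generalized Jacobian of $\nabla\Phi$ at the limit is negative definite on $\operatorname{Ran}(\mathcal{M})$.

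For the global part, the inclusion $\{\boldsymbol{\lambda}^j\} \subseteq \operatorname{Ran}(\mathcal{M})$ follows by induction since $\boldsymbol{\lambda}^0 = \mathbf{0}$ and the linear system restricts $\boldsymbol{d}^j \in \operatorname{Ran}(\mathcal{M})$. Whenever $\nabla\Phi(\boldsymbol{\lambda}^j) \neq \mathbf{0}$, $\varepsilon_j > 0$ makes the coefficient matrix negative definite on $\operatorname{Ran}(\mathcal{M})$, so $\boldsymbol{d}^j$ is well-defined and $\langle \nabla\Phi(\boldsymbol{\lambda}^j), \boldsymbol{d}^j \rangle > 0$; the Armijo line search with $\mu \in (0,1/2)$ then accepts a step in finitely many trials. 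Hence $\{\Phi(\boldsymbol{\lambda}^j)\}$ is monotone increasing and bounded above by weak duality, since the primal \eqref{eq:optfinite} has a finite minimum. Coercivity of $\Phi$ on $\operatorname{Ran}(\mathcal{M})$ is the classical consequence of the Slater-type condition \ref{Hcon4}: strict feasibility of the primal (a positive $\bar{\boldsymbol{g}} > \mathbf{0}$ with $\mathcal{M}\bar{\boldsymbol{g}} = \boldsymbol{\rho}(f)$) forces boundedness of the dual optimal set, and for a concave objective this is equivalent to level-boundedness. Hence $\{\boldsymbol{\lambda}^j\}$ is bounded. A standard Zoutendijk-type argument combined with the uniform curvature bound supplied by $\varepsilon_j$ and continuity of $\nabla\Phi$ forces $\nabla\Phi(\boldsymbol{\lambda}^j) \to \mathbf{0}$; concavity of $\Phi$ then makes every accumulation point a maximizer of \eqref{eq:optdual}.

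For the quadratic rate, I use that $\Pi_+(\cdot) = \max(\cdot, \mathbf{0})$ is piecewise affine and hence strongly semismooth, which transfers to $\nabla\Phi$. A routine chain-rule computation shows that every element of $\partial(\nabla\Phi)(\boldsymbol{\lambda}^*)$ takes the form $-\tfrac12 \mathcal{M}\,\mathrm{Diag}(\boldsymbol{u})\,\mathcal{M}^{\intercal}$ with $u_k = 1$ for $k \in \mathcal{K}$, $u_k = 0$ for $k \notin \mathcal{K}$ with $(\boldsymbol{f}_N + \tfrac12 \mathcal{M}^{\intercal}\boldsymbol{\lambda}^*)_k < 0$, and $u_k \in [0,1]$ on the degenerate indices where that entry vanishes. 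The estimate
\begin{equation*}
\boldsymbol{y}^{\intercal}\mathcal{M}\,\mathrm{Diag}(\boldsymbol{u})\,\mathcal{M}^{\intercal}\boldsymbol{y} \geq \sum_{k \in \mathcal{K}} (\mathcal{M}_{:,k}^{\intercal} \boldsymbol{y})^2,
\end{equation*}
combined with \eqref{eq:span_condition}, yields uniform negative definiteness of every such Jacobian on $\operatorname{Ran}(\mathcal{M})$. Near $\boldsymbol{\lambda}^*$, $\boldsymbol{u}^j$ agrees with an admissible $\boldsymbol{u}$ on all non-degenerate coordinates and $\varepsilon_j = O(\|\nabla\Phi(\boldsymbol{\lambda}^j)\|)$ acts as a vanishing perturbation, so the classical semismooth Newton theorem produces the quadratic estimate for the full-step iterates. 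A final check shows that the Armijo condition with $\mu < 1/2$ accepts $n_j = 0$ for all sufficiently large $j$, so the line search does not spoil the rate. The hardest part is precisely handling the degenerate indices, where $\boldsymbol{u}^j$ may oscillate between $0$ and $1$; condition \eqref{eq:span_condition} is the minimal algebraic hypothesis that guarantees nonsingularity of every limiting Jacobian regardless of this ambiguity, and is the crux of the local analysis.
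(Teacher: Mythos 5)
Your proposal is correct, and it rests on the same semismooth Newton framework as the paper, but the two arguments are organized quite differently: the paper's proof is essentially a verification-plus-citation argument, while yours is a self-contained sketch of the cited results. Concretely, the paper checks that $\nabla\Phi$ is Lipschitz and strongly semismooth with generalized Hessians of the form $-\tfrac12\mathcal{M}\,\partial\Pi_+(\boldsymbol{f}_N+\tfrac12\mathcal{M}^{\intercal}\boldsymbol{\lambda})\,\mathcal{M}^{\intercal}$ (via \cite{hiriart1984generalized,sun2008lowner}), then invokes \cite[Theorems~3.4 and~3.5]{zhao2010newton} for both the global convergence and the quadratic rate; its only bespoke step is showing that the constraint nondegeneracy condition \cref{eq:constraint_nondegeneracy}, i.e.\ $\mathcal{M}\,{\rm lin}(\mathcal{T}_{\mathbb{R}^{(2N+1)^d}_+}(\boldsymbol{g}^*))={\rm Ran}(\mathcal{M})$, is equivalent to the span condition \cref{eq:span_condition} by computing the tangent cone and its lineality space. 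You instead unpack what those citations contain: for the global part, monotone ascent from the $\varepsilon_j$-regularized direction, boundedness via coercivity of $\Phi$ on ${\rm Ran}(\mathcal{M})$ deduced from the Slater-type condition \ref{Hcon4}, and a Zoutendijk-type argument plus concavity for optimality of accumulation points; for the local part, you show directly that every limiting Clarke Jacobian is uniformly negative definite on ${\rm Ran}(\mathcal{M})$ because $u_k=1$ on $\mathcal{K}$ and \cref{eq:span_condition} rules out nontrivial vectors of ${\rm Ran}(\mathcal{M})$ orthogonal to $\{\mathcal{M}_{:,k}\}_{k\in\mathcal{K}}$ — which is precisely the mechanism hidden behind the paper's nondegeneracy condition — and then appeal to the standard local theory (vanishing perturbation $\varepsilon_j=\mathcal{O}(\|\nabla\Phi(\boldsymbol{\lambda}^j)\|)$, eventual acceptance of unit steps for $\mu<1/2$). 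One point in your favor: your explicit coercivity argument via \ref{Hcon4} (the recession direction computation showing $d^{\intercal}\boldsymbol{\rho}(f)=(\mathcal{M}^{\intercal}d)^{\intercal}\bar{\boldsymbol{g}}<0$ when $\mathcal{M}^{\intercal}d\le 0$, $d\neq 0$) supplies the boundedness of the dual solution set that the paper attributes only to the subspace restriction $\boldsymbol{\lambda}\in{\rm Ran}(\mathcal{M})$ and otherwise leaves implicit in the citation; what the paper's route buys in exchange is brevity and rigor at the delegated steps (notably the passage from accumulation points to convergence of the whole sequence and the precise eventual-unit-step argument, which you assert as "standard" rather than prove).
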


\section{Numerical results}\label{sec:numerical}
In this section, we will present numerical results to demonstrate the performance of our positive and moment-preserving Fourier spectral method. We will take the number of dimensions $d=3$ in all the numerical examples and denote the spatial variable as $(x,y,z)^{\intercal} \in \mathbb{R}^3$.

\subsection{Convergence test}
In this example, we choose different functions $f$ and check the error of $\Pi_+^N f$. Since $\Pi_+^N f \in \mathbb{S}^N$, we can apply Parseval's theorem and obtain
\begin{displaymath}
    \| f - \Pi_+^N f \|_2^2 = \| f - \mathcal{P}^N f \|_2^2 + \| \mathcal{P}^N f - \Pi_+^N f \|_2^2.
\end{displaymath}
All the three errors in the above equality will be presented for a detailed comparison.

\paragraph{Example 1: $H^m_p$ functions} In the first example, we consider periodic functions defined on $[-\pi,\pi]^3$ with the following expression:
\begin{equation}\label{eq:cosinefun}
    f(x,y,z) =\left[ \left( 1 - \cos(x) \right) \left( 1 - \cos(y) \right) \left( 1 - \cos(z) \right) \right]^{q}.
\end{equation}
Three choices of $q$ will be studied:
\begin{itemize}
\item $q=4/5$, for which $f \in H^2_p([-\pi,\pi]^3)$ but $f \not\in H^3_p([-\pi,\pi]^3)$.
\item $q=13/10$, for which $f \in H^3_p([-\pi,\pi]^3)$ but $f \not\in H^4_p([-\pi,\pi]^3)$.
\item $q=9/5$, for which $f \in H^4_p([-\pi,\pi]^3)$ but $f \not\in H^5_p([-\pi,\pi]^3)$.
\end{itemize}

\begin{table}[!ht]
    \renewcommand{\arraystretch}{1.} 
    \centering
    \caption{Errors of projections for $f = \left[\left( 1 - \cos(x) \right) \left( 1 - \cos(y) \right) \left( 1 - \cos(z) \right) \right]^{q}$.}
    \resizebox{\textwidth}{!}{
    \begin{tabular}{cccccc}
        \hline
        $N$ & $2$ & $4$ & $8$ & $16$ & $32$ \\ 
        \hline
        \multicolumn{6}{c}{$q=4/5$}\\
        \hline
        $\| \mathcal{P}^N f - \Pi_+^N f \|_2$ &  $2.94 \times 10^{-2}$ &  $2.28 \times 10^{-3}$ &  $1.31 \times 10^{-4}$ &  $6.35 \times 10^{-6}$ & $2.83 \times 10^{-7}$ \\
        $\| \mathcal{P}^N f - \Pi^N f \|_2$ & $2.85 \times 10^{-2}$ &  $2.26 \times 10^{-3}$ &  $1.31 \times 10^{-4}$ &  $6.35 \times 10^{-6}$ & $2.83 \times 10^{-7}$ \\
        $\| f - \mathcal{P}^N f \|_2$ &  $5.27 \times 10^{-1}$ &  $1.54 \times 10^{-1}$ &  $4.04 \times 10^{-2}$ &  $1.00 \times 10^{-2}$ & $2.42 \times 10^{-3}$ \\
        \hline
        $\| f - \Pi_+^N f \|_2$ &  $5.28 \times 10^{-1}$ &  $1.54 \times 10^{-1}$ &  $4.04 \times 10^{-2}$ &  $1.00 \times 10^{-2}$ & $2.42 \times 10^{-3}$ \\
        order &   &  $1.78$ &  $1.93$ &  $2.01$ & $2.05$ \\
        \hline
        \multicolumn{6}{c}{$q=13/10$}\\
        \hline
        $\| \mathcal{P}^N f - \Pi_+^N f \|_2$ &  $9.08 \times 10^{-1}$ &  $1.23 \times 10^{-1}$ &  $1.61 \times 10^{-2}$ &  $2.00 \times 10^{-3}$ & $2.42 \times 10^{-4}$ \\
        $\| \mathcal{P}^N f - \Pi^N f \|_2$ & $4.06 \times 10^{-2}$ &  $1.74 \times 10^{-3}$ &  $5.34 \times 10^{-5}$ &  $1.33 \times 10^{-6}$ & $3.02 \times 10^{-8}$ \\
        $\| f - \mathcal{P}^N f \|_2$ &  $7.69 \times 10^{-1}$ &  $1.18 \times 10^{-1}$ &  $1.61 \times 10^{-2}$ &  $2.05 \times 10^{-3}$ & $2.51 \times 10^{-4}$ \\
        \hline
        $\| f - \Pi_+^N f  \|_2$ &  $1.19 \times 10^{0}$ &  $1.70 \times 10^{-1}$ &  $2.28 \times 10^{-2}$ &  $2.86 \times 10^{-3}$ & $3.49 \times 10^{-4}$ \\
        order &   &  $2.81$ &  $2.90$ &  $2.99$ & $3.03$ \\
        \hline
        \multicolumn{6}{c}{$q=9/5$}\\
        \hline
        $\| \mathcal{P}^N f - \Pi_+^N f \|_2$ &  $1.02 \times 10^{0}$ &  $6.61 \times 10^{-2}$ &  $4.40 \times 10^{-3}$ &  $2.80 \times 10^{-4}$ & $1.71 \times 10^{-5}$ \\
        $\| \mathcal{P}^N f - \Pi^N f \|_2$ & $5.41 \times 10^{-2}$ &  $1.13 \times 10^{-3}$ &  $1.78 \times 10^{-5}$ &  $2.28 \times 10^{-7}$ & $2.59 \times 10^{-9}$ \\
        $\| f - \mathcal{P}^N f \|_2$ &  $1.06 \times 10^{0}$ &  $7.67 \times 10^{-2}$ &  $5.33 \times 10^{-3}$ &  $3.46 \times 10^{-4}$ & $2.14 \times 10^{-5}$ \\
        \hline
        $\| f - \Pi_+^N f \|_2$ &  $1.47 \times 10^{0}$ &  $1.01 \times 10^{-1}$ &  $6.91 \times 10^{-3}$ &  $4.45 \times 10^{-4}$ & $2.74 \times 10^{-5}$ \\
        order &   &  $3.86$ &  $3.87$ &  $3.96$ & $4.02$ \\
        \hline
    \end{tabular}
    }
    \label{tab:projerror3}
\end{table}

\cref{tab:projerror3} shows that the error of our new projection $\| f - \Pi_+^N f \|_2$ is always at the same magnitude as the error of the direct projection $\| f - \mathcal{P}^N f \|_2$. Moreover, the convergence order of $\| f - \Pi_+^N f \|_2$ differs with respect to the regularity of function $f$, which agrees with \cref{cor:spectral}.

\paragraph{Example 2: smooth function} The second example is a smooth and periodic function
\begin{equation}\label{eq:funsinexp}
    f(x,y,z) = \left(\sin^2x + \sin^2 y + \sin^2 z \right) \exp \left(- \frac{1}{\sin^2 x} - \frac{1}{\sin^2 y} - \frac{1}{\sin^2 z}  \right),
\end{equation}
for which the spectral accuracy of $\| f - \Pi_+^N f \|_2$ can be observed in \cref{tab:projerror6}.

\begin{table}[h]
    \renewcommand{\arraystretch}{1.} 
    \centering
    \caption{Errors of the different projections for $f$ in \cref{eq:funsinexp}.}
    \resizebox{\textwidth}{!}{
    \begin{tabular}{cccccc}
        \hline
        $N$ & $2$ & $4$ & $8$ & $16$ & $32$ \\ 
        \hline
        $\| \mathcal{P}^N f - \Pi_+^N f \|_2$ &  $1.08 \times 10^{-1}$ &  $1.40 \times 10^{-2}$ &  $3.80 \times 10^{-3}$ &  $2.14 \times 10^{-4}$ & $6.03 \times 10^{-6}$ \\
        $\| \mathcal{P}^N f - \Pi^N f \|_2$ & $3.35 \times 10^{-3}$ &  $4.22 \times 10^{-4}$ &  $3.88 \times 10^{-5}$ &  $1.02 \times 10^{-6}$ & $7.13 \times 10^{-9}$ \\
        $\| f - \mathcal{P}^N f \|_2$ &  $6.24 \times 10^{-2}$ &  $2.29 \times 10^{-2}$ &  $4.15 \times 10^{-3}$ &  $4.04 \times 10^{-4}$ & $9.40 \times 10^{-6}$ \\
        \hline
        $\| f - \Pi_+^N f \|_2$ &  $1.25 \times 10^{-1}$ &  $2.68 \times 10^{-2}$ &  $5.63 \times 10^{-3}$ &  $4.57 \times 10^{-4}$ & $1.12 \times 10^{-5}$ \\
        order &   &  $2.22$ &  $2.25$ &  $3.62$ & $5.35$ \\
        \hline
    \end{tabular}
    }
    \label{tab:projerror6}
\end{table}

\subsection{Application to the Boltzmann equation}
The initial value problem of the space homogeneous Boltzmann equation describes the evolution of non-negative distribution function $f(t,v)$ in three dimensions ($d=3$) as
\begin{equation}\label{eq:boltz}
    \left\{
    \begin{aligned}
        & \frac{\partial f}{\partial t} = Q(f,f),\\
        & f(t=0,v) = f_0(v),\qquad v \in \mathbb{R}^3,
    \end{aligned}
    \right.
\end{equation}
where the bilinear collision or interaction operator
\begin{equation}\label{eq:collision}
    Q(f,f)(v) = \int_{\mathbb{R}^3} \int_{\mathbb{S}^{2}} B(|v-v_*|,\cos \theta) \left(f(v_*') f(v') - f(v_*) f(v) \right) \mathrm{d} \sigma \mathrm{d}(v-v_*).
\end{equation}
The parameters in \cref{eq:collision} discribes the details of bilinear collision, where $(v,v_*)$ and $(v',v_*')$ are pre-collisional velocity pair and post-collisional velocity pair, respectively; $\sigma$ is the specular reflection direction and $\cos \theta = \frac{(v-v_*) \cdot \sigma}{|v-v_*|}$; $B(|v-v_*|,\cos \theta)$ is the collision kernel. For elastic collisions, the collision operator in \cref{eq:collision} satisfies
\begin{equation}
    \int_{\mathbb{R}^3} Q(f,f) \phi(v) \mathrm{d}v = 0, \qquad \phi(v) = 1, v_1,v_2,v_3, |v|^2,
\end{equation}
which implies the conservation of mass, momentum and energy for $f$ in \cref{eq:boltz}.

To discretize \cref{eq:boltz} by the Fourier spectral method (see \cite{Pareschi2000Fourier} or the review paper \cite{pareschi2014acta} and the references therein), we assume $f(t,x)$ has a compact support in a ball $B_{0}(R)$ and truncate the simulation domain into a larger bounded hypercube $[-L,L]^3$ where $L \geq  \frac{3+\sqrt{2}}{2}R$ to avoid aliasing error. Furthermore, we can periodize both the distribution function and the collision operator so that the right-hand side of the Boltzmann equation becomes
\begin{equation}\label{eq:truncol}
    Q^R(f,f)(v) = \int_{B_0(2R)} \int_{\mathbb{S}^{2}} B(|v-v_*|,\cos \theta) \left(f(v_*') f(v') - f(v_*) f(v) \right) \mathrm{d} \sigma \mathrm{d}(v-v_*).
\end{equation}

It should be noted that the operator $Q^R$ does not preserve all the collision invariants of the original operator.  In particular, the conservations of momentum and energy are lost. Considering the following expansion of the distribution function:
\begin{equation}\label{eq:ffourier}
    f(v) = \sum_{k \in \mathcal{N}^3} \hat{f}_k e^{i \frac{\pi}{L} k \cdot v}, \qquad \hat{f}_k = \frac{1}{(2 L)^3} \int_{[-L,L]^3} f(v) e^{i \frac{\pi}{L} k \cdot v} \mathrm{d}v,
\end{equation}
the Fourier coefficients of $Q^R(f,f)$ can be approximated by
\begin{equation}\label{eq:Qsum}
\hat{Q}_k = \sum_{(l+m) \bmod N =k} \left( \hat{B}(l,m)-\hat{B}(m,m) \right) \hat{f}_l \hat{f}_m,
\end{equation}
and the thus Fourier method for the truncated Boltzmann equation is
\begin{equation}\label{eq:fourierboltz}
    \frac{\mathrm{d} \hat{f}_k}{\mathrm{d} t} = \hat{Q}_k.
\end{equation}
The initial distribution function can be approximated by either interpolation or projection.

In our tests, we consider the Maxwell gas molecules by taking $B(|v-v_*|,\cos \theta) = \frac{1}{4 \pi}$, which implies $\hat{B}(l,m)$ in \cref{eq:Qsum} as
\begin{equation*}
    \hat{B}(l,m) = 32 \pi R^3 \frac{(\xi + \eta) \sin(\xi - \eta) - (\xi - \eta) \sin(\xi + \eta)}{2 \xi \eta (\xi + \eta)(\xi - \eta)},
\end{equation*}
where $\xi= \frac{|l+m|R\pi}{L},\eta= \frac{|l-m|R\pi}{L}$.

For a forward Euler method with uniform time step $\Delta t$, we denote the vector of the coefficients $\hat{f}_k$ at the $n$th time step as $\boldsymbol{f}^n$. Then the temporal discretization of \cref{eq:fourierboltz} becomes 
\begin{equation}
    \boldsymbol{f}^{n+1} = \boldsymbol{f}^n + \Delta t \boldsymbol{Q}(\boldsymbol{f}^n, \boldsymbol{f}^n),
\end{equation}
where $\boldsymbol{Q}$ is the vector of $\hat{Q}_k$. In general, this method preserves only mass conservation, and we will fix the momentum conservation, the energy conservation and the positivity by applying our positive projection after each time step, so that the numerical scheme becomes
\begin{equation}\label{eq:fourierposEuler}
    \left\{
    \begin{aligned}
    & \boldsymbol{f}^{n+1,*} = \boldsymbol{f}^n + \Delta t \boldsymbol{Q}(\boldsymbol{f}^n, \boldsymbol{f}^n),\\
    & f_N^{n+1} = \operatorname*{argmin}_{g \in \mathbb{S}_+^N} \|g - f_N^{n+1,*}\|_2^2 \quad \text{s.t.} \quad \boldsymbol{\rho}(g) = \boldsymbol{\rho}(f_N^n), \\
    & f_N^{0} = \Pi^N_+ f_0.
    \end{aligned}
    \right.
\end{equation}
For the purpose of comparison, we will also present the solution with moment conservation only, where $\mathbb{S}^N_+$ is the above scheme is replaced by $\mathbb{S}^N$.
Also, the scheme in \cref{eq:fourierposEuler} can be easily generalized to higher-order scheme by using Runge-Kutta methods. In our implementation, we use the third-order strong stability-preserving Runge-Kutta method \cite{sspreview2001} by applying projection to each stage.

In the following two examples, we take $R = 3$ and time step $\Delta t = 0.01$. The error of moments is computed by the $l^2$ error of the five conserved moments.

\paragraph{Example 1: BKW solution} The first example is the Bobylev-Krook-Wu (BKW) solution\cite{Bobylev1975,kw1977}, which is an exact solution of \cref{eq:boltz} as

\begin{equation}
    f(t,v) = \frac{1}{(2 \pi S)^{3/2}} \exp \left( - \frac{|v|^2}{2S} \right) \left( \frac{5S-3}{2S} + \frac{1-S}{2S^2} |v|^2 \right),
\end{equation}
where $S = 1 - 2 \exp(-t/6)/5$. 

\begin{table}[htbp]
    \centering
    \caption{Errors and properties of $f_N$ at $t = 0.1$.}
    \begin{tabular}{cccc}
        \hline
        N & 4 & 8 & 16 \\
        \hline
        \multicolumn{4}{c}{spectral method} \\
        \hline
        $\| \mathcal{I}^N f - f_N \|_2$ & $6.39\times10^{-2}$ & $3.03\times 10^{-3}$ & $2.10 \times 10^{-8}$ \\
        $\| \boldsymbol{\rho}(f) - \boldsymbol{\rho}(f_N) \|_2$ & $1.70\times10^{-3}$ & $2.66\times 10^{-3}$ & $1.06 \times 10^{-9}$ \\
        $\min_{k \in \mathcal{N}^3}(f_N(x_k))$ & $-2.55\times10^{-4}$ & $-1.43\times 10^{-4}$ & $-1.68 \times 10^{-10}$ \\
        \hline
        \multicolumn{4}{c}{moment projection} \\
        \hline
        $\| \mathcal{I}^N f - f_N \|_2$ & $6.39\times10^{-2}$ & $3.03 \times 10^{-3}$ & $2.10 \times 10^{-8}$ \\
        $\| \boldsymbol{\rho}(f) - \boldsymbol{\rho}(f_N) \|_2$ & $4.86\times10^{-15}$ & $2.49\times 10^{-14}$ & $2.70 \times 10^{-14}$ \\
        $\min_{k \in \mathcal{N}^3}(f_N(x_k))$ & $-2.55\times10^{-4}$ & $-1.43\times 10^{-4}$ & $-1.68 \times 10^{-10}$ \\
        \hline
        \multicolumn{4}{c}{positive and moment-preserving} \\
        \hline
        $\| \mathcal{I}^N f - f_N \|_2$ & $6.62\times 10^{-2}$ & $2.99 \times 10^{-3}$ & $2.10 \times 10^{-8}$ \\
        $\| \boldsymbol{\rho}(f) - \boldsymbol{\rho}(f_N) \|_2$ & $6.39\times10^{-16}$ & $4.01\times 10^{-15}$ & $1.26 \times 10^{-14}$ \\
        $\min_{k \in \mathcal{N}^3}(f_N(x_k))$ & $0$ & $0$ & $0$ \\
        \hline
    \end{tabular}
    \label{tab:BKW1}
\end{table}

It can be seen in \cref{tab:BKW1} that all three methods exhibit spectral convergence at $t = 0.1$. Meanwhile, although both moment-preserving methods preserve the conservation of all moments (up to machine accuracy), our new positive and moment-preserving Fourier method is the only one that preserves positivity.

\begin{figure}[htbp]
	\centering
	\subfigure[$\| \mathcal{I}^N f - f_N \|_2$.]
	{\includegraphics[width=.32\textwidth]{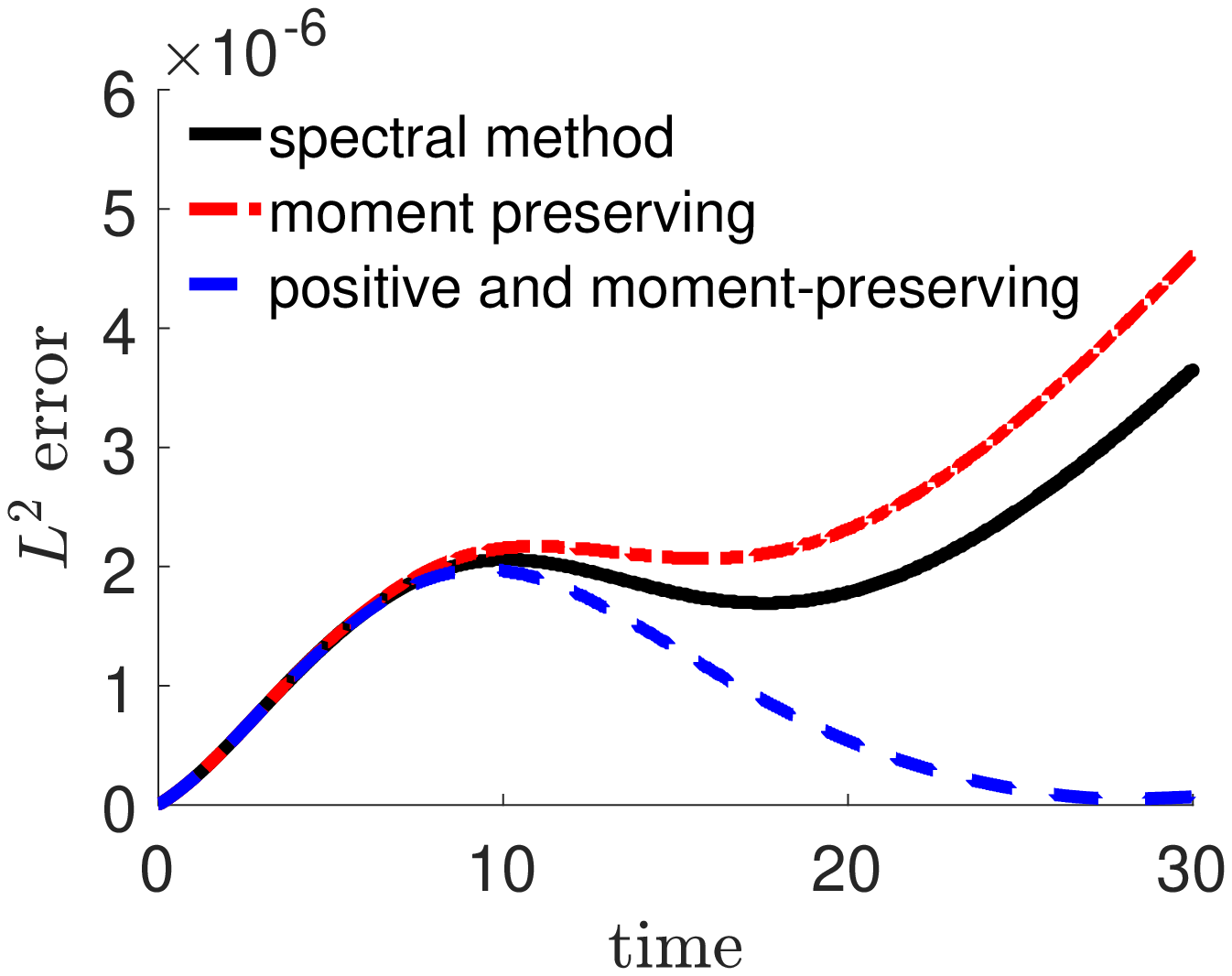}}
    \subfigure[$\| \boldsymbol{\rho}(f) - \boldsymbol{\rho}(f_N) \|_2$.]
	{\includegraphics[width=.32\textwidth]{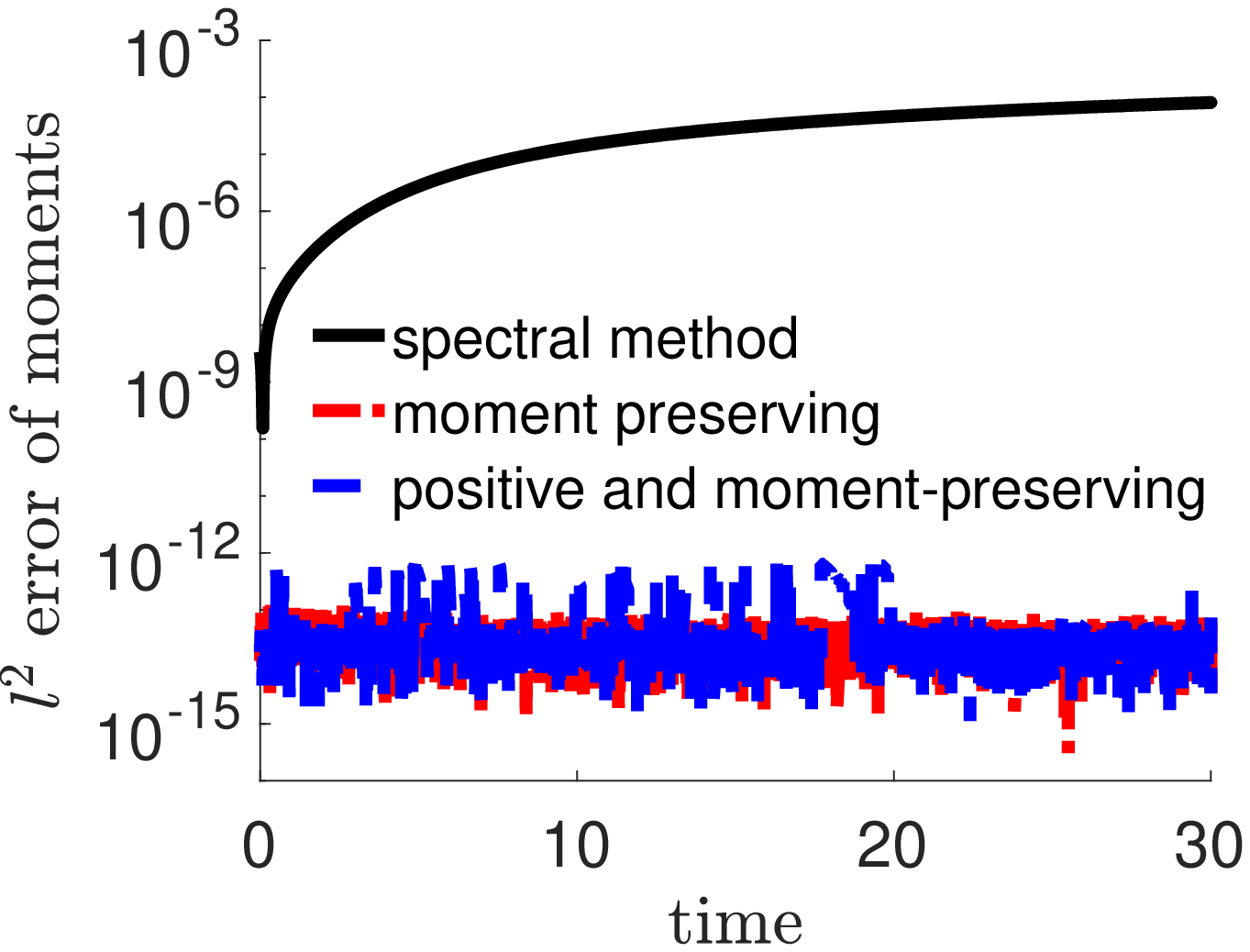}}
    \subfigure[$\min_{k \in \mathcal{N}^3}(f_N(x_k))$.]
	{\includegraphics[width=.32\textwidth]{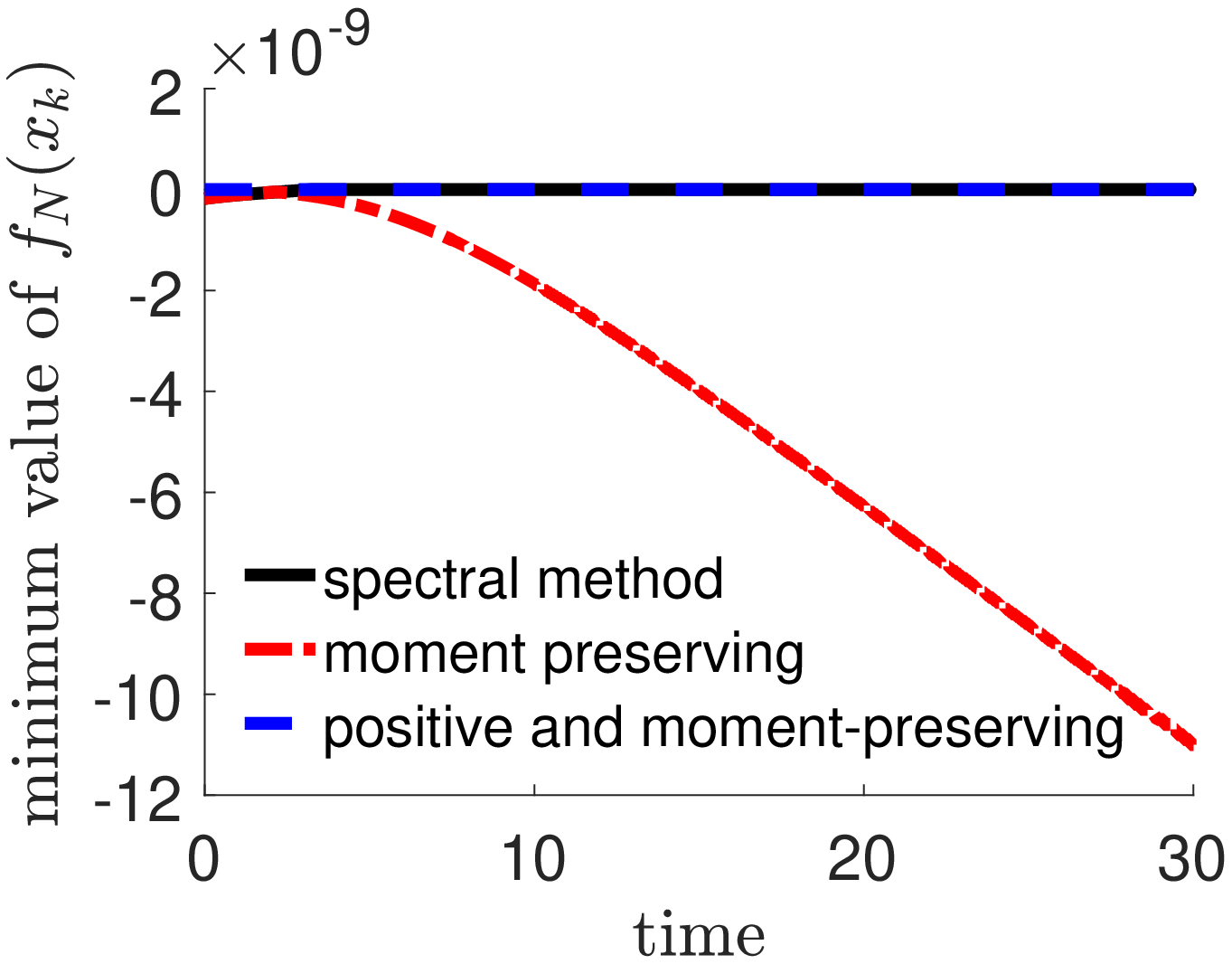}}
	
        \caption{Example of the Boltzmann equation for BKW solution. Time evolution of the $L^2$ error $\| \mathcal{I}^N f - f_N \|_2$, $l^2$ error of moments $\| \boldsymbol{\rho}(f) - \boldsymbol{\rho}(f_N) \|_2$ and minimum value $\min_{k \in \mathcal{N}^3}(f_N(x_k))$ for $N=16$.
        \label{fig:bkw3d}
        }
\end{figure}

We are also interested in the long-time performance of all three methods in \cref{tab:BKW1}. Therefore, we simulate the BKW problem to a longer time $t=30$ for $N=16$, and we plot the result in \cref{fig:bkw3d}. It can be observed that the positive and moment-preserving method achieves better accuracy for a relatively longer simulation.

\paragraph{Example 2: discontinuous initial value}
This example adopts a discontinuous initial value \cite{cai2018entropic} as
\begin{displaymath}
    f_0(v) = \left\{
    \begin{aligned}
        \frac{\rho_1}{(2 \pi T_1)^{3/2}} \exp \left( - \frac{|v|^2}{2 T_1}\right), \ \text{ if } v_1 >0, \\
        \frac{\rho_2}{(2 \pi T_2)^{3/2}} \exp \left( - \frac{|v|^2}{2 T_2}\right), \ \text{ if } v_1 <0, 
    \end{aligned}
    \right.
\end{displaymath}
where $\rho_1 = 6/5$, $\rho_2 = 4/5$, $T_1 = 2/3$ and $T_2 = 3/2$. Numerical solutions for $N=16$ on the line $y=z=0$ at $t=0.5$ are plotted in \cref{fig:discontinuous}.

\begin{figure}[htb]
	\centering
	\includegraphics[width=.44\textwidth]{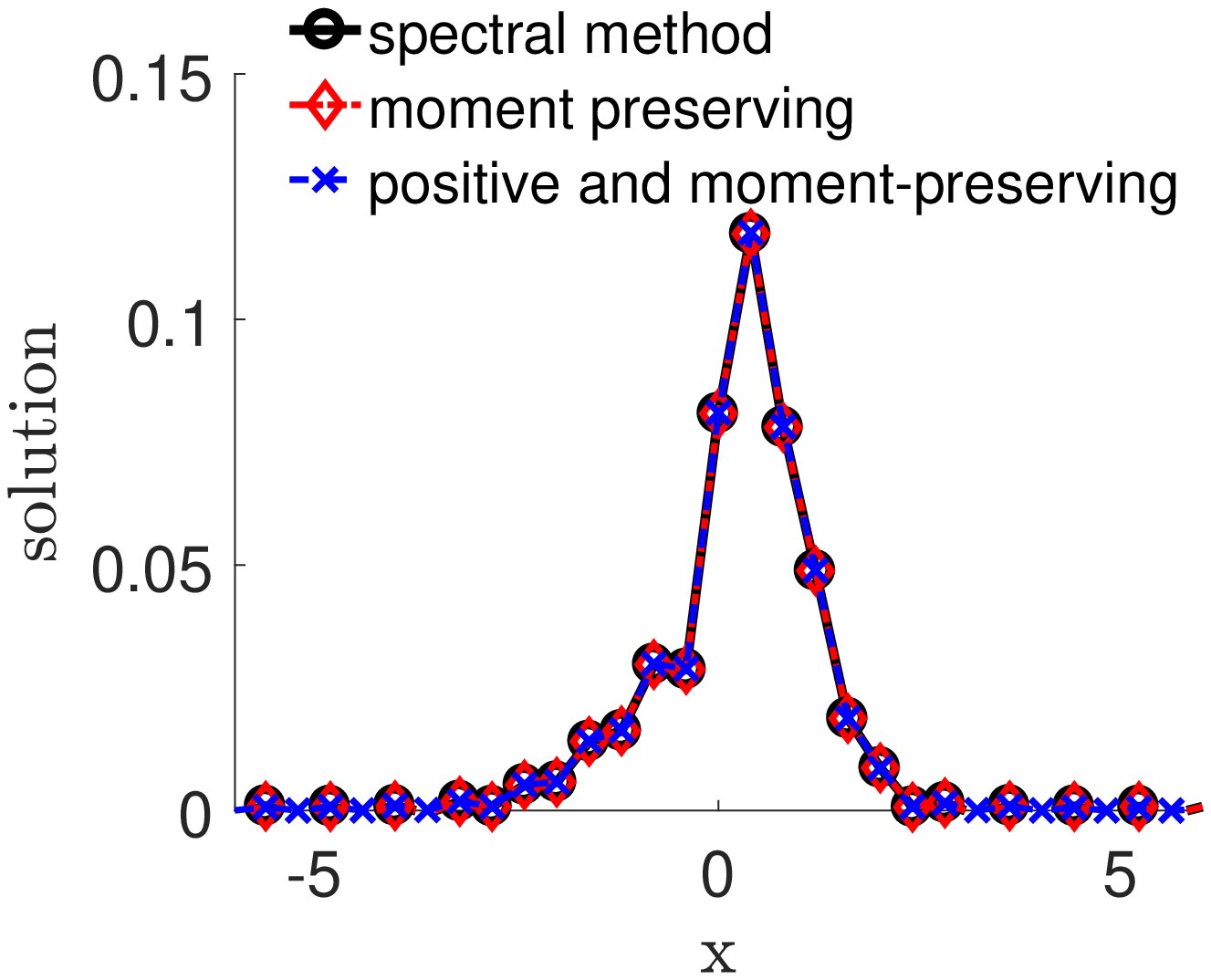}
    \includegraphics[width=.44\textwidth]{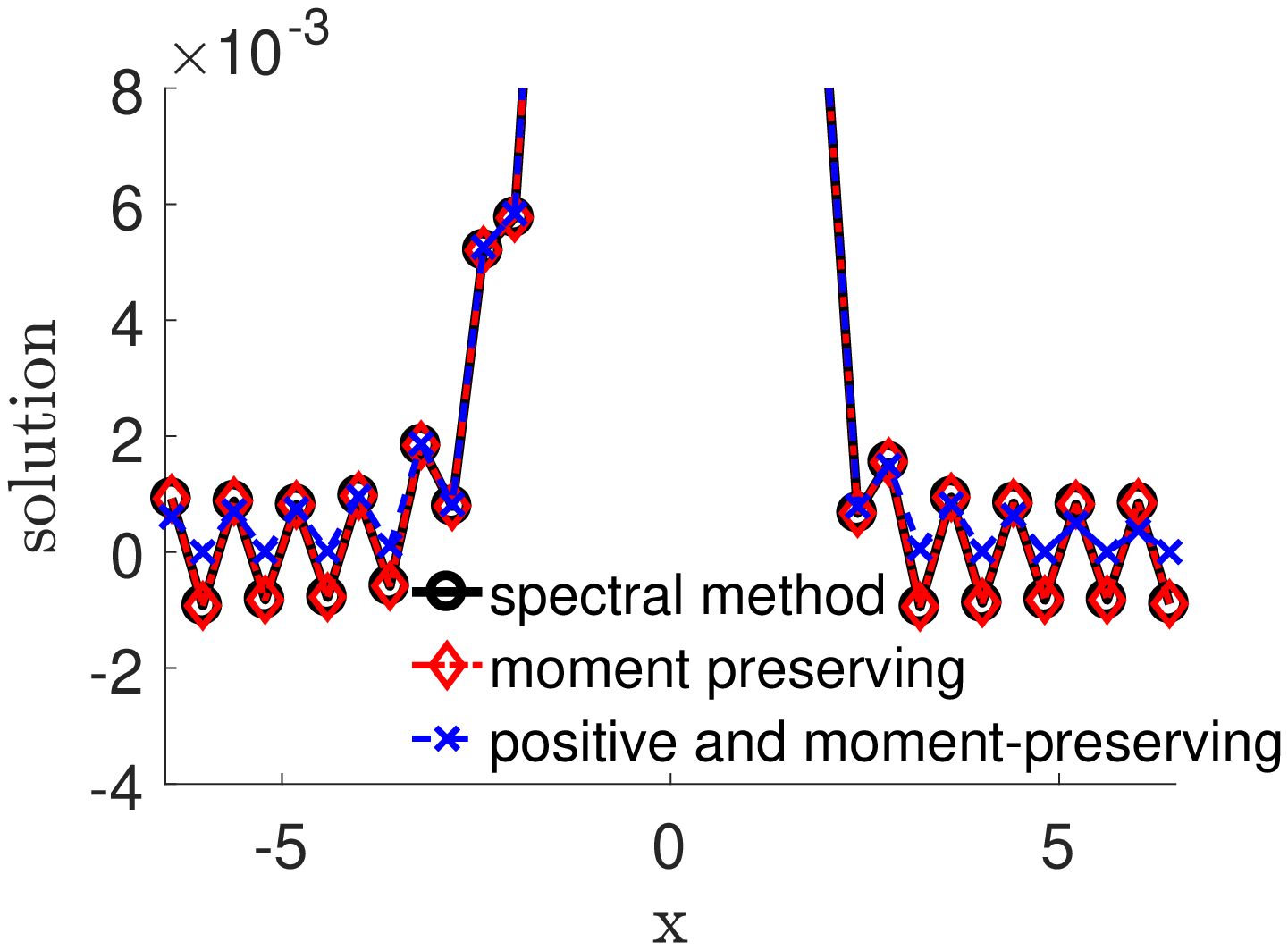}
        \caption{Example of the Boltzmann equation for discontinuous initial value. $N=16$ and $t = 0.5$.
        \label{fig:discontinuous}
        }
\end{figure}

Due to the discontinuity occurring in the initial value, we observe the Gibbs phenomenon for all three methods in \cref{fig:discontinuous}. Nevertheless, the solution of positive and moment-preserving method is always non-negative, and the magnitude of its oscillation is smaller than that of the other two methods.

\section{Conclusions}\label{sec:conclusions}
This work focuses on maintaining the positivity and the conservation of moments when applying the Fourier spectral method. By introducing an optimization problem in the space of trigonometric polynomials, a new projection is constructed such that all moments are conserved and the projected trigonometric polynomial is non-negative on all collocation points. We analyze the accuracy of the new projection, and prove the spectral accuracy is maintained. Moreover, an efficient and practical Newton's method is proposed to solve the associated optimization problem, which is proved to enjoy quadratic convergence. The spectral accuracy of the new Fourier spectral method is further validated by numerical examples including applications to the Boltzmann equation.

Regarding the generality of this new projection, future works include the extension to other equations where positivity is an important issue in numerical implementation, e.g., the Fokker-Planck equation or the Cahn-Hilliard equation with a logarithmic potential. The idea of constructing new projection may also be applied to other spectral methods.

\appendix
\section{Proof of \cref{lemma:poly}}\label{appendix:polynomial}
With the help of triangle inequality, it suffices to estimate the $H^1$ semi-norm and the projection error of monomials, which are given in \cref{col:H1semi,col:projmono}, respectively. They are based on the computation and estimate of monomials $x^{\alpha} = x_1^{\alpha_1} x_2^{\alpha_2} \cdots x_d^{\alpha_d}$ shown in the following lemma.

\begin{lemma}[Fourier coefficients of monomials]\label{lemma:polyfourier} For a given monomial $x^{\alpha}$, its Fourier coefficients $(\widehat{x^{\alpha}})_k$ satisfy
\begin{equation}\label{eq:monocoefest}
    | (\widehat{x^{\alpha}})_k | \leq C(|\alpha|,d) \prod_{j=1}^d \left( \delta_{0,k_j} + \frac{1 - \delta_{0,k_j}}{|k_j|} \right),\qquad \forall k = (k_1,...,k_d)^{\intercal},
\end{equation}
where $\frac{1 - \delta_{0,k_j}}{|k_j|}:=0$ when $k_j=0$, and the constant $C(|\alpha|,d)= \frac{2^d \pi^{|\alpha|} |\alpha|^d}{d^d}$.

\end{lemma}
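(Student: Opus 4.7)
The plan is to exploit the tensor-product structure of both the monomial $x^{\alpha}=\prod_{j=1}^d x_j^{\alpha_j}$ and the Fourier kernel $e^{-ik\cdot x}=\prod_{j=1}^d e^{-ik_j x_j}$ so that the $d$-dimensional Fourier integral in \cref{eq:fouriertran} factorizes as a product of one-dimensional integrals:
\begin{displaymath}
(\widehat{x^{\alpha}})_k = \prod_{j=1}^{d} I(\alpha_j,k_j), \qquad I(m,\ell):=\frac{1}{2\pi}\int_{-\pi}^{\pi} x^{m}\, e^{-i\ell x}\,\mathrm{d}x.
\end{displaymath}
The whole problem then reduces to proving a clean one-dimensional estimate of the form $|I(m,\ell)|\leq c_{m}\bigl(\delta_{0,\ell}+\tfrac{1-\delta_{0,\ell}}{|\ell|}\bigr)$ with an explicit constant $c_{m}$, followed by a bookkeeping step in which the $d$ one-dimensional bounds are multiplied.

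For the one-dimensional estimate I would separate the cases $\ell=0$ and $\ell\neq 0$. The case $\ell=0$ is handled at once by the pointwise inequality $|x|\leq\pi$ on $[-\pi,\pi]$, which gives $|I(m,0)|\leq \pi^{m}$. For $\ell\neq 0$, a single integration by parts isolates the $1/|\ell|$ decay: the boundary term $\frac{1}{2\pi}\bigl[\frac{x^{m}e^{-i\ell x}}{-i\ell}\bigr]_{-\pi}^{\pi}$ is controlled by $\pi^{m}/|\ell|$ and the remainder $\frac{m}{2\pi i\ell}\int_{-\pi}^{\pi} x^{m-1}e^{-i\ell x}\,\mathrm{d}x$ by $\pi^{m}/|\ell|$ once $\int_{-\pi}^{\pi}|x|^{m-1}\,\mathrm{d}x=2\pi^{m}/m$ is used. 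This yields a bound of the shape $|I(m,\ell)|\leq c_{m}/|\ell|$ with $c_{m}$ of order $m\,\pi^{m}$, which combined with the $\ell=0$ estimate produces the desired form.

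Multiplying the $d$ one-dimensional estimates produces
\begin{displaymath}
|(\widehat{x^{\alpha}})_k| \leq \Bigl(\prod_{j=1}^{d} c_{\alpha_j}\Bigr) \prod_{j=1}^{d}\Bigl(\delta_{0,k_j}+\frac{1-\delta_{0,k_j}}{|k_j|}\Bigr).
\end{displaymath}
The factor $\pi^{|\alpha|}$ in the target constant is immediate from $\prod_{j}\pi^{\alpha_j}=\pi^{\sum_{j}\alpha_{j}}$, and the factor $|\alpha|^{d}/d^{d}$ comes from the AM--GM inequality applied to the exponents: since $\sum_{j}\alpha_{j}=|\alpha|$, one has $\prod_{j}\alpha_{j}\leq (|\alpha|/d)^{d}$. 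Combined with the factor $2^{d}$ inherited from the integration-by-parts constants, this identifies the overall bound as $C(|\alpha|,d)=\frac{2^{d}\pi^{|\alpha|}|\alpha|^{d}}{d^{d}}$.

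The main bookkeeping obstacle is to choose a single constant $c_{m}$ that uniformly dominates both the $\ell=0$ case (where no $1/|\ell|$ decay is available) and the $\ell\neq 0$ case (where two IBP pieces must be estimated) in a form where the product $\prod_{j}c_{\alpha_{j}}$ is amenable to AM--GM. Taking $c_{m}$ proportional to $\alpha_{j}\pi^{\alpha_{j}}$ (with the convention that any $\alpha_{j}=0$ component factors trivially out of the product, since then $I(0,k_j)=\delta_{0,k_j}$ forces $k_j=0$ for the coefficient to be non-zero) is the natural choice, after which the remaining calculation collapses into pure algebra.
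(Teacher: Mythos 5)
Your proposal follows essentially the same route as the paper's proof: factor $(\widehat{x^{\alpha}})_k$ into one-dimensional integrals $F(k_j,\alpha_j)$, bound the $k_j=0$ factor by $\pi^{\alpha_j}$, integrate by parts for $k_j\neq 0$ to extract the $1/|k_j|$ decay, then multiply the factors and apply AM--GM ($\prod_j \alpha_j \leq (|\alpha|/d)^d$) to arrive at the constant $C(|\alpha|,d)$. The only difference is cosmetic: you use a single integration by parts and bound the remaining integral by its absolute value, while the paper integrates by parts twice and bounds the resulting term $F(m,n-2)$; both give a one-dimensional estimate of the form $c_m\left(\delta_{0,m}+\frac{1-\delta_{0,m}}{|m|}\right)$ with $c_m$ of order $m\pi^m$, and your explicit handling of the $\alpha_j=0$ factors (where $I(0,k_j)=\delta_{0,k_j}$) is, if anything, slightly more careful than the paper's combined estimate, which silently excludes the case $m=n=0$.
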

\begin{proof}
    A direct computation yields
\begin{equation}\label{eq:fourierpoly}
    (\widehat{x^{\alpha}})_k = \frac{1}{(2\pi)^d} \int_{[-\pi, \pi]^d} x^{\alpha} e^{-i k \cdot x }\mathrm{d}x = \prod_{j=1}^d F(k_j, \alpha_j), 
\end{equation}
where $F(m,n)$ for two integers $m$ and $n \geq 0$ is the one-dimensional Fourier transform of monomial
\begin{equation}\label{eq:fourierpoly2}
    F(m, n) = \frac{1}{2\pi} \int_{-\pi}^{\pi} t^{n} e^{-i m t} \mathrm{d}t.
\end{equation}
When $m=0$, $F(0, n) = \frac{\left((-1)^{n }+1\right) \pi^{n}}{2(n +1)}$. When $m \neq 0$ and $n = 0$, $F(m, 0) = 0$. When $m \neq 0$ and $n > 0$, we can apply integration by parts and get
\begin{equation}\label{eq:fourierpolyoddeven}
    F(m, n) = \left\{ \begin{array}{cc}
       i(-1)^m\frac{\pi^{n-1}}{m} - \frac{n(n-1)}{m^2} F(m,n-2),  & n \text{ is odd}, \\
       (-1)^m\frac{n\pi^{(n-2)}}{m^2} - \frac{n(n-1)}{m^2} F(m,n-2), & n \text{ is even},
    \end{array}
    \right.
\end{equation}
where we specify $F(m,-1)=0$ in the above equality. By the definition in \cref{eq:fourierpoly2}, it holds that when $n>1$,
\begin{equation*}
    |F(m,n-2)| \leq \frac{1}{2\pi} \int_{-\pi}^{\pi} |t|^{n-2} \mathrm{d}t = \frac{\pi^{n-2}}{n-1}.
\end{equation*}
Plugging the above inequality into \cref{eq:fourierpolyoddeven} yields 
$|F(m, n)| \leq \frac{2 n \pi^{n-1}}{|m|}$ for all $m \neq 0$. On the other hand, it is obvious that $|F(0, n)| = \left| \frac{\left((-1)^{n }+1\right) \pi^{n}}{2(n +1)} \right| \leq \pi^n$. Combining the above different cases of $|F(m, n)|$, we get an estimate for all $m$ and $n$ that
\begin{equation}
    |F(m, n)| \leq 2 n \pi^n \left( \delta_{0,m} + \frac{1-\delta_{0,m}}{|m|}\right).
\end{equation}
By plugging the above estimate into \cref{eq:fourierpoly}, we obtain the desired \cref{eq:monocoefest} where the constant $C(|\alpha|,d)$ comes from $\prod_{j=1}^d 2 \alpha_j \pi^{\alpha_j} = 2^d \pi^{|\alpha|} \prod_{j=1}^d \alpha_j \leq \frac{2^d \pi^{|\alpha|} |\alpha|^d}{d^d}$.
\end{proof}

Based on this lemma, two corollaries will be shown, which are related to the $H^1$ semi-norm and the projection error of monomials.

\begin{corollary}[$H^1$ semi-norm of monomials] \label{col:H1semi} The $H^1$ semi-norm of $\mathcal{P}^N x^{\alpha}$ satisfies
    \begin{equation}\label{eq:pnh1}
        | \mathcal{P}^N x^{\alpha} |_{H^1} \leq C(|\alpha|,d) \sqrt{N},
    \end{equation}
    where $C(|\alpha|,d) = \sqrt{2d \left( \frac{2^d \pi^{|\alpha|} |\alpha|^d}{d^d} \right)^2 \left( \frac{\pi^2}{3} + 1 \right)^{d-1}}$ is a constant.
\end{corollary}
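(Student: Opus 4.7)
The plan is to reduce the $H^1$ semi-norm to a weighted sum of squared Fourier coefficients via Parseval's theorem, then plug in the pointwise bound from \cref{lemma:polyfourier} and separate the sum into a product of one-dimensional sums.

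First I would write
\begin{displaymath}
|\mathcal{P}^N x^{\alpha}|_{H^1}^2 \;=\; \sum_{j=1}^d \|\partial_j \mathcal{P}^N x^{\alpha}\|_{L^2}^2 \;\propto\; \sum_{j=1}^d \sum_{k \in \mathcal{N}^d} k_j^2 \,|(\widehat{x^{\alpha}})_k|^2,
\end{displaymath}
using the fact that differentiation of a truncated Fourier series multiplies the $k$-th coefficient by $ik_j$, and applying Parseval. Substituting the bound \cref{eq:monocoefest} from \cref{lemma:polyfourier} then yields
\begin{displaymath}
|\mathcal{P}^N x^{\alpha}|_{H^1}^2 \;\lesssim\; \left(\frac{2^d \pi^{|\alpha|} |\alpha|^d}{d^d}\right)^{\!2} \sum_{j=1}^d \sum_{k \in \mathcal{N}^d} k_j^2 \prod_{l=1}^d \left(\delta_{0,k_l} + \frac{1-\delta_{0,k_l}}{k_l^2}\right),
\end{displaymath}
where the square inside the product disappears because $(\delta_{0,k_l} + (1-\delta_{0,k_l})/|k_l|)^2$ simplifies to $\delta_{0,k_l} + (1-\delta_{0,k_l})/k_l^2$ (the two cases $k_l=0$ and $k_l\neq 0$ are disjoint).

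Next I would factor the inner sum by coordinate. For each fixed $j$, the factor $k_j^2$ kills the $\delta_{0,k_j}$ contribution, so the sum over $k_j$ collapses to $\sum_{k_j \in \mathcal{N},\, k_j \neq 0} k_j^2 \cdot \tfrac{1}{k_j^2} = 2N$. For each $l \neq j$, the one-dimensional factor is
\begin{displaymath}
\sum_{k_l \in \mathcal{N}} \left(\delta_{0,k_l} + \frac{1-\delta_{0,k_l}}{k_l^2}\right) \;=\; 1 + 2\sum_{m=1}^N \frac{1}{m^2} \;\leq\; 1 + \frac{\pi^2}{3},
\end{displaymath}
using the standard bound $\sum_{m\geq 1} 1/m^2 = \pi^2/6$. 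The outer sum over $j$ from $1$ to $d$ then contributes the factor $d$, producing the $2d N \left(\tfrac{\pi^2}{3}+1\right)^{d-1}$ block.

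Combining these pieces gives exactly $|\mathcal{P}^N x^{\alpha}|_{H^1}^2 \leq C(|\alpha|,d)^2 N$ with the claimed constant, which yields \cref{eq:pnh1} after taking the square root. There is no genuine obstacle here; the only care needed is the bookkeeping of the Parseval normalization and the $\delta_{0,k_l}$ vs.\ $1/k_l^2$ dichotomy when squaring the pointwise bound of \cref{lemma:polyfourier}.
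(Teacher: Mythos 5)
Your proposal is correct and follows essentially the same route as the paper's proof: Parseval to express $|\mathcal{P}^N x^{\alpha}|_{H^1}^2$ as $\sum_{l}\sum_{k}|k_l|^2|(\widehat{x^{\alpha}})_k|^2$, insertion of the coefficient bound from \cref{lemma:polyfourier}, and coordinate-wise factorization giving the $2N$ factor for the differentiated coordinate and $1+\pi^2/3$ for each of the remaining $d-1$ coordinates. The only caveat, the Parseval normalization you flag, is treated in the paper exactly as you treat it, so there is no substantive difference.
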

\begin{proof}
    By the definition of $H^1$ semi-norm and \cref{lemma:polyfourier},
\begin{align*}
    | \mathcal{P}^N x^{\alpha} |_{H^1}^2 & = \sum_{k \in \mathcal{N}^d} \sum_{l=1}^d |k_l|^2 | (\widehat{x^{\alpha}})_k |^2 \\
    & \leq \left( \frac{2^d \pi^{|\alpha|} |\alpha|^d}{d^d} \right)^2 \sum_{l=1}^d \sum_{k \in \mathcal{N}^d} |k_l|^2 \prod_{j=1}^d \left( \delta_{0,k_j} + \frac{1 - \delta_{0,k_j}}{|k_j|} \right)^2 \\
    & = \left( \frac{2^d \pi^{|\alpha|} |\alpha|^d}{d^d} \right)^2 \sum_{l=1}^d \sum_{k_l=-N}^N \left( |k_l| \delta_{0,k_l} + 1 - \delta_{0,k_l} \right)^2 \prod_{\substack{j=1 \\ j \neq l}}^d \sum_{k_j=-N}^N \left( \delta_{0,k_j} + \frac{1 - \delta_{0,k_j}}{|k_j|} \right)^2 \\
    & = \left( \frac{2^d \pi^{|\alpha|} |\alpha|^d}{d^d} \right)^2 \sum_{l=1}^d (2N) \prod_{\substack{j=1 \\ j \neq l}}^d \left( 2\sum_{k_j=1}^N \frac{1}{|k_j|^2} + 1  \right) \\
    & \leq \left( \frac{2^d \pi^{|\alpha|} |\alpha|^d}{d^d} \right)^2 d (2N) \left( \frac{\pi^2}{3} + 1 \right)^{d-1}.
\end{align*}
\end{proof}

\begin{corollary}[Projection error of monomials] \label{col:projmono} For a monomial $x^{\alpha}$,
\begin{equation}\label{eq:polyprojerr}
    \| \mathcal{P}^N x^{\alpha} - x^{\alpha} \|_2 \leq C(|\alpha|,d) N^{-1/2},
\end{equation}
where constant $C(|\alpha|,d) = \sqrt{2 d \left( \frac{2^d \pi^{|\alpha|} |\alpha|^d}{d^d} \right)^2 \left( \frac{\pi^2}{3} + 1 \right)^{d-1}}$.
\end{corollary}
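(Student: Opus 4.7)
The plan is to mirror the computation carried out for the $H^1$ semi-norm in \cref{col:H1semi}, but with the sum now running over the Fourier modes outside $\mathcal{N}^d$ and without the $|k_l|^2$ weight. First I would invoke Parseval's identity (in the convention used in the proof of \cref{col:H1semi}) to write
\begin{equation*}
\| \mathcal{P}^N x^{\alpha} - x^{\alpha}\|_2^2 = \sum_{k \in \mathbb{Z}^d \setminus \mathcal{N}^d} |(\widehat{x^{\alpha}})_k|^2,
\end{equation*}
and then apply the pointwise bound from \cref{lemma:polyfourier} to each Fourier coefficient, extracting the constant $\left(\frac{2^d \pi^{|\alpha|}|\alpha|^d}{d^d}\right)^2$ and leaving the product $\prod_{j=1}^d \bigl(\delta_{0,k_j} + (1-\delta_{0,k_j})/|k_j|\bigr)^2$ inside the sum.

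The main work is estimating this product sum over $\mathbb{Z}^d \setminus \mathcal{N}^d$. I would use the inclusion $\mathbb{Z}^d \setminus \mathcal{N}^d \subseteq \bigcup_{l=1}^d \{k : |k_l| > N\}$ and a union bound to reduce to $d$ sums, each of which factorizes across coordinates. For the distinguished coordinate $l$, we have $|k_l| > N$ (hence $k_l \neq 0$), so the factor reduces to $1/|k_l|^2$ and the tail bound
\begin{equation*}
\sum_{|k_l|>N} \frac{1}{|k_l|^2} \;=\; 2\sum_{k=N+1}^\infty \frac{1}{k^2} \;\leq\; \frac{2}{N}
\end{equation*}
follows from integral comparison. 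For each of the remaining $d-1$ coordinates, $k_j$ ranges freely over $\mathbb{Z}$ and yields
\begin{equation*}
\sum_{k_j \in \mathbb{Z}} \left(\delta_{0,k_j} + \frac{1-\delta_{0,k_j}}{|k_j|}\right)^2 = 1 + 2\sum_{k=1}^\infty \frac{1}{k^2} = 1 + \frac{\pi^2}{3},
\end{equation*}
exactly as in the $H^1$ proof. Multiplying these together and summing over the $d$ choices of $l$ produces the overall factor $d \cdot \frac{2}{N} \cdot (1 + \pi^2/3)^{d-1}$.

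Combining this with the prefactor from the Fourier coefficient bound gives
\begin{equation*}
\| \mathcal{P}^N x^{\alpha} - x^{\alpha}\|_2^2 \leq 2d \left(\frac{2^d \pi^{|\alpha|}|\alpha|^d}{d^d}\right)^2 \left(\frac{\pi^2}{3}+1\right)^{d-1} \frac{1}{N},
\end{equation*}
and taking the square root yields \cref{eq:polyprojerr} with exactly the stated constant. There is no substantive obstacle here: the proof is essentially a template-match of the $H^1$ argument, with the tail bound $\sum_{|k|>N} k^{-2} \leq 2/N$ playing the role that the counting bound $\#\{|k_l|\leq N\} = 2N+1$ played in \cref{col:H1semi}. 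The only care needed is in handling the union bound cleanly so that each summand factorizes and the constant matches what is asserted.
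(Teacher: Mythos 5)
Your proposal is correct and follows essentially the same route as the paper: Parseval on the tail modes, the coefficient bound from \cref{lemma:polyfourier}, the union over the $d$ coordinates with $|k_l|>N$, factorization of the remaining sums into $(\pi^2/3+1)$ factors, and the tail estimate $\sum_{n>N} n^{-2}\leq 1/N$, yielding exactly the stated constant. The only (immaterial) difference is that you obtain the tail bound by integral comparison, whereas the paper invokes a polygamma-function inequality; both give the same $1/N$ factor.
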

\begin{proof}
By Parseval's theorem and \cref{lemma:polyfourier},
\begin{align*}
    \| \mathcal{P}^N x^{\alpha} - x^{\alpha} \|_2^2 
    & \leq \sum_{l=1}^d \sum_{k_1=-\infty}^{\infty} \cdots \sum_{k_{l-1}=-\infty}^{\infty} \sum_{|k_l|>N} \sum_{k_{l+1}=-\infty}^{\infty} \cdots \sum_{k_d=-\infty}^{\infty} |(\widehat{x^{\alpha}})_k|^2 \\
    & \leq \left( \frac{2^d \pi^{|\alpha|} |\alpha|^d}{d^d} \right)^2  \sum_{l=1}^d \sum_{|k_l|>N} \frac{1}{|k_l|^2} \prod_{\substack{j=1 \\ j \neq l}}^d \sum_{k_j=-\infty}^{\infty} \left( \delta_{0,k_j} + \frac{1 - \delta_{0,k_j}}{|k_j|} \right)^2 \\
    & = \left( \frac{2^d \pi^{|\alpha|} |\alpha|^d}{d^d} \right)^2 2 d \sum_{n > N} \frac{1}{n^2} \left( \frac{\pi^2}{3} + 1 \right)^{d-1}.
\end{align*}
On the other hand, $\sum_{n > N} \frac{1}{n^2} = \psi^{(1)}(N+1)$, where $\psi^{(m)}(z)$ denotes the polygamma function \cite[Section 6.4]{abramowitz1964handbook}. It is shown in \cite{QI2010polygammabound} that for $z > 0$,
\begin{equation*}
    (-1)^{(m+1)}\psi^{(m)}(z) < \frac{(m-1)!}{z^m} + \frac{m!}{z^{m+1}}.
\end{equation*}
 Therefore,
\begin{equation*}
    N \sum_{n > N} \frac{1}{n^2} = N \psi^{(1)}(N+1) \leq \frac{N}{N+1} + \frac{N}{(N+1)^2} \leq 1,
\end{equation*}
and \cref{eq:polyprojerr} is proved.
\end{proof}

\section{Proof of \cref{thm: ssn}}
\label{sec: convergence_SSN}
We first note that the function $\nabla \Phi(\boldsymbol{\cdot})$ is Lipschitz continuous on ${\rm Ran}(\mathcal{M})$, as $\Pi_{+}(\cdot)$ is Lipschitz continuous with modulus 1 \cite{rockafellar1976monotone}. According to Rademacher’s theorem \cite[Section 9.J]{rockafellar2009variational}, $\nabla \Phi(\boldsymbol{\cdot})$ is almost everywhere Fr$\acute{\rm e}$chet-differentiable in ${\rm Ran}(\mathcal{M})$. Let $\boldsymbol{\lambda}\in {\rm Ran}(\mathcal{M})$ be any given point. Define the following operator
\begin{align*}
\hat{\partial}^2 \Phi (\boldsymbol{\lambda}):=- \frac{1}{2} \mathcal{M} \partial \Pi_+ (\boldsymbol{f}_N + \frac{1}{2} \mathcal{M}^{\intercal} \boldsymbol{\lambda} ) \mathcal{M}^{\intercal},
\end{align*}
where $\partial \Pi_+(\boldsymbol{f}_N + \frac{1}{2} \mathcal{M}^{\intercal} \boldsymbol{\lambda} )$ is the Clarke subdifferential \cite{clarke1990optimization} of the Lipschitz continuous function $\Pi_+(\boldsymbol{\cdot})$ at $\boldsymbol{f}_N + \frac{1}{2} \mathcal{M}^{\intercal} \boldsymbol{\lambda}$, defined as the following set of diagonal matrices:
\begin{equation}\label{eq:partialpi}
    \partial \Pi_+(\boldsymbol{w}) = \left\{ \text{Diag}(\boldsymbol{u}) \,\middle\vert\ 
    \begin{array}{ll}
        u_k=1, & w_k > 0, \\
        u_k\in {[0,1]}, & w_k = 0, \\
        u_k=0, & w_k < 0,
    \end{array}
    \right\},\quad \mbox{for all }\boldsymbol{w}\in \mathbb{R}^{(2N+1)^d}.
\end{equation}
From \cite{hiriart1984generalized}, we have that 
\begin{align*}
\partial^2 \Phi (\boldsymbol{\lambda}) \boldsymbol{d} = \hat{\partial}^2 \Phi (\boldsymbol{\lambda}) \boldsymbol{d},\quad \mbox{for all }\boldsymbol{d}\in \mathbb{R}^M,
\end{align*}
where $\partial^2 \Phi (\boldsymbol{\lambda})$ denotes the Clarke subdifferential of $\nabla \Phi(\cdot)$ at $\boldsymbol{\lambda}$. Moreover, it can be seen that the elements in $\hat{\partial}^2 \Phi (\boldsymbol{\lambda})$ are all symmetric and negative semidefinite. Furthermore, $\nabla \Phi(\boldsymbol{\cdot})$ is strongly semismooth \cite{sun2008lowner} with respect to $\hat{\partial}^2 \Phi (\boldsymbol{\lambda})$. 

Based on the above discussions, we can design a semismooth Newton method to solve the nonsmooth equation \cref{eq:dualopt}. The semismooth Newton method is a generalized version of the classical Newton method, where the main modification is to replace the Hessian matrix by a generalized Hessian operator. In our case, any element in $\hat{\partial}^2 \Phi (\boldsymbol{\lambda})$ can be seen as a generalized Hessian of $\Phi(\boldsymbol{\cdot})$. In particular, we can pick 
\begin{align*}
- \frac{1}{2} \mathcal{M} \text{Diag}(\boldsymbol{u}) \mathcal{M}^{\intercal} \in \hat{\partial}^2 \Phi (\boldsymbol{\lambda}),
\end{align*}
where $\boldsymbol{u}\in \mathbb{R}^{(2N+1)^d}$ is defined as: for each $k$, $u_k = 1$ if $(\boldsymbol{f}_N + \frac{1}{2} \mathcal{M}^{\intercal} \boldsymbol{\lambda} )_k>0$ and $u_k=0$ otherwise.

The first part of the convergence results in \cref{thm: ssn} then follows from \cite[Theorem 3.4]{zhao2010newton}. In order to ensure the quadratic convergence rate in the second part, according to \cite[Theorem 3.5]{zhao2010newton}, we only need the constraint nondegeneracy assumption
\begin{align}
\mathcal{M} \ {\rm lin}(\mathcal{T}_{\mathbb{R}^{(2N+1)^d}_+}(\boldsymbol{g}^*))={\rm Ran}(\mathcal{M}), \label{eq:constraint_nondegeneracy}
\end{align}
where $\mathcal{T}_{\mathbb{R}^{(2N+1)^d}_+}(\boldsymbol{g}^*)$ denotes the tangent cone of $\mathbb{R}^{(2N+1)^d}_+$ at $\boldsymbol{g}^*$, and ${\rm lin}(\mathcal{T}_{\mathbb{R}^{(2N+1)^d}_+}(\boldsymbol{g}^*))$ denotes the lineality space of $\mathcal{T}_{\mathbb{R}^{(2N+1)^d}_+}(\boldsymbol{g}^*)$, that is the greatest linear subspace contained in $\mathcal{T}_{\mathbb{R}^{(2N+1)^d}_+}(\boldsymbol{g}^*)$. By noting the fact that
\begin{align*}
\mathcal{T}_{\mathbb{R}^{(2N+1)^d}_+}(\boldsymbol{g}^*) = \left\{
\boldsymbol{g} \in \mathbb{R}^{(2N+1)^d} \middle\vert g_k \geq 0,\ k \in \mathcal{N}^d \backslash {\cal K}
\right\},
\end{align*}
where ${\cal K}$ is defined in the theorem, we have that
\begin{align*}
{\rm lin}(\mathcal{T}_{\mathbb{R}^{(2N+1)^d}_+}(\boldsymbol{g}^*)) = \left\{
\boldsymbol{g} \in \mathbb{R}^{(2N+1)^d} \middle\vert g_k = 0,\ k \in \mathcal{N}^d \backslash {\cal K}
\right\}.
\end{align*}
Therefore, we can see that the condition \cref{eq:constraint_nondegeneracy} is equivalent to \cref{eq:span_condition} in \cref{thm: ssn}. This completes the proof.

\end{document}